\newtheoremstyle{standard}
 {16pt}  
 {16pt}  
 {}  
 {}  
 {\bfseries}
 {}  
 { } 
 {{\thmname{#1~}}{\thmnumber{#2.}}\thmnote{~(#3)}} 
\newtheoremstyle{kursiv}
 {16pt}  
 {16pt}  
 {\itshape}  
 {}  
 {\bfseries}
 {}  
 { } 
 {{\thmname{#1~}}{\thmnumber{#2.}}\thmnote{~(#3)}} 
\theoremstyle{standard}
\newtheorem{defn} [subsection]{Definition}
\newtheorem{rem}   [subsection]{Remark}
\newtheorem{numba} [subsection]{}
\theoremstyle{definition}
\theoremstyle{kursiv}
\newtheorem{thm}[subsection]{Theorem}
\newtheorem{prop} [subsection]{Proposition}
\newtheorem{cor} [subsection]{Corollary}
\newtheorem{lem} [subsection]{Lemma}
\newcommand{\Punkt}{\nopagebreak\hspace*{\fill}$\Box$}
\newcommand{\im}{\mathrm{im}}
\newcommand{\Ad}{\mathrm{Ad}}
\newcommand{\evol}{\mathrm{evol}}
\newcommand{\id}{\mathrm{id}}
\newcommand{\N}{\mathbb{N}}
\newcommand{\R}{\mathbb{R}}
\newcommand{\K}{\mathbb{K}}
\newcommand{\C}{\mathbb{C}}
\newcommand{\fF}{\ensuremath{\mathcal{F}}}
\newcommand{\cC}{\ensuremath{\mathcal{C}}}
\newcommand{\set}[1]{\{  #1 \}}
\newcommand{\setm}[2]{\left\{\, #1 \middle\vert #2\,\right\}}
\newcommand{\norm}[1]{\lVert #1 \rVert}
\newcommand{\coloneq}{\colonequals}
\DeclareMathOperator{\Germ}{Germ}
\DeclareMathOperator{\bHol}{BHol}
\DeclareMathOperator{\op}{op}
\title{Complexifications of infinite-dimensional manifolds and new
constructions of
infinite-dimensional Lie groups}
\author{Rafael Dahmen, Helge Gl{\"o}ckner, Alexander Schmeding}
\begin{document}
%
%

\thispagestyle{empty}
\maketitle

$\;$\\[-15mm]
\begin{abstract}
Let $M$ be a real analytic manifold modeled on a locally convex space
and $K\subseteq M$ be a non-empty compact set.
We show that if an open neighborhood~$\Omega$ of~$K$ in~$M$
admits a complexification~$\Omega^*$ which is a regular topological space,
then the germ of~$\Omega^*$ around~$K$ (as a complex manifold)
is uniquely determined.
If $M$ is regular and the complexified modeling space of~$M$ is normal,
then a regular complexification $\Omega^*$ exists
for some~$\Omega$.
For each regular $\K$-analytic manifold~$M$
modeled on a metrizable locally convex space
over $\K\in \{\R,\C\}$
and each $\K$-analytic Banach-Lie group~$H$,
this enables
the group $\text{Germ}(K,H)$
of germs of $\K$-analytic $H$-valued maps around
$K$ in~$M$ to be turned into a $\K$-analytic Lie group
which is regular in Milnor's sense (and, actually, $C^0$-regular).
Notably, this provides a
$C^0$-regular real analytic Lie group structure
on the group $C^\omega(M,H)$
of $H$-valued real analytic maps on a compact real analytic manifold~$M$
(which, previously,
had only been treated in the convenient setting of analysis).
Combining our results concerning Lie groups of germs
with an idea by Neeb and Wagemann,
it is also possible to obtain a $C^0$-regular Lie group structure
on $C^\omega(\R,H)$.\vspace{1mm}
\end{abstract}

\noindent
{\bf MSC 2010 Subject Classification:} Primary  
22E65;
Secondary
22E05,\\ 
22E67, 
26E05, 
26E15, 
46E40, 
46E50, 
46G20, 
46T25, 
58C10\\[2.5mm]
\textbf{Keywords:} Complexification, germ, Lie group, real analytic map,
regularity, infinite dimension,
Banach-Lie group, mapping group, current group, loop group,
analytic extension, holomorphic extension, logarithmic derivative,
projective limit, existence, uniqueness, paracompactness,
regularity\vspace{-1mm}

\tableofcontents
\section*{Introduction and statement of results}
It is well known that every paracompact, finite-dimensional
real analytic manifold~$M$ admits a complexification,
whose germ around~$M$ (as a complex manifold) is uniquely determined.
Two classical constructions for such complexifications are known.
The first one, by Bruhat and Whitney \cite{BW1959},
is based on a glueing construction:
complex analytic extensions of real analytic charts are pasted together
in a suitable way.
The second approach was devised by Grauert \cite{grauert}.
Here the manifold is embedded in
its cotangent bundle which supports a complex analytic structure
(of a Stein manifold) on a neighborhood of the embedding. These neighborhoods
are the famous ``Grauert tubes,''
and one advantage is that these can be chosen contractible to~$M$.\\[2mm]
In this paper, we investigate complexifications around a non-empty compact
subset of an infinite-dimensional real analytic manifold.
One cannot expect to adapt Grauert's construction
to infinite-dimensional manifolds,
and, in fact,
one would not always expect to find (global) complexifications,
in the natural sense:\footnote{Later, we shall find it convenient to allow
also complexifications which are only diffeomorphic to
those just described, see Definition~\ref{cxgeneral}.}\\[2.5mm]
{\bf Definition.}
Let $M$ be a real analytic manifold modeled on a locally convex space~$E$.
A complex analytic manifold~$M^*$ modeled on~$E_{\mathbb C}$ is called
a \emph{complexification of~$M$} if $M\subseteq M^*$
and each $x\in M$ is contained in the domain~$U^*$ of a chart
$\psi\colon U^*\to U'\subseteq E_{\mathbb C}$
of~$M^*$ such that $\psi(M\cap U^*)=E\cap U'$ and
$\psi|_{M\cap U^*}\colon M\cap U^* \to E\cap U'$
is a chart for~$M$.\\[2.5mm]
Nonetheless,
we shall see that a complexification can be constructed at least
for \emph{an open neighborhood} of
a compact set, if the manifold and its model space satisfy certain
regularity properties. Our argument
adapts the line of thought developed by Bruhat and Whitney in
the classical paper \cite{BW1959}. In particular,
we obtain the following theorem: 
\paragraph{Theorem A} \emph{Let $M$ be a real analytic manifold modeled on the locally
convex space~$E$. Assume that as topological spaces,~$M$ is regular and~$E_\C$
is normal.
For each non-empty compact subset~$K$ of~$M$, there is an open
neighborhood~$\Omega$ of~$K$ in~$M$ such that~$\Omega$ admits
a complexification~$\Omega^*$.
The complex analytic manifold $\Omega^*$ constructed is a regular topological space.}\\[1em]
We also obtain a uniqueness result for complexifications:
\paragraph{Theorem B} \emph{Let $M$ be a real analytic manifold modeled on a locally convex space.
\begin{itemize}
\item[{\normalfont (a)}]
If $M^*_j$ is a complexification
of~$M$ for $j\in\{1,2\}$
such that~$M$ is a closed subset of~$M^*_j$
and~$M_j^*$ is paracompact,
then there exist open neighborhoods~$U_j$ of~$M$ in~$M_j^*$
and a complex analytic diffeomorphism $\psi\colon U_1\to U_2$
such that $\psi|_M=\id_M$.
\item[{\normalfont (b)}]
If $K\subseteq M$ is a non-empty compact set
and $\Omega^*_j$, for $j\in\{1,2\}$,
is a regular complexification
of an open neighborhood~$\Omega_j$ of~$K$ in~$M$
such that $\Omega_j\subseteq \Omega_j^*$,
then there exist open neighborhoods $U_j\subseteq\Omega_j^*$ of~$K$
and a complex analytic diffeomorphism $\psi\colon U_1\to U_2$
such that $\psi|_{U_1\cap \Omega_1}=\id_{U_1\cap \Omega_1}$.
\end{itemize}}
\noindent
We mention that closedness of~$M$ in a complexification~$M^*$
is often easily achieved. Indeed,
if~$M^*$ is a complexification of~$M$
and each open neighborhood of~$M$ in~$M^*$
contains a paracompact open neighborhood of~$M$,
then there exists a paracompact open neighborhood~$U$ of~$M$ in~$M^*$
such that~$M$ is closed in~$U$ (Proposition~\ref{specialize}\,(a)).
The paracompactness is automatic if~$M^*$ is metrizable
(see \cite[Theorem~5.1.3]{Engelking1989}).
If~$M$ is closed in~$M^*$
and $M^*$ is paracompact,
then there exists an anti-holomorphic involution $\sigma\colon U\to U$
on an open neighborhood~$U$ of~$M$ in~$M^*$ such that
\[
M=\{z\in U\colon \sigma(z)=z\}.
\]
A similar result is available for complexifications around compact sets
(see Proposition~\ref{specialize} (b) and (c) for details).\\[2.3mm]
Our applications in infinite-dimensional Lie theory concern Lie groups
of analytic Lie group-valued mappings (or germs
of such), and their regularity properties.
A Lie group~$G$ modeled on a locally convex space
is called \emph{$C^0$-regular}
if each continuous curve $\gamma\colon [0,1]\to L(G)$
in its Lie algebra $L(G)=T_1(G)$ admits a continuously
differentiable left evolution $\eta=\eta_\gamma\colon [0,1]\to G$
determined by
\[
\eta(0)=1\quad\mbox{and}\quad (\forall t\in [0,1])\;\; \eta'(t)=\eta(t).\gamma(t),
\]
and the evolution map $\evol_G\colon C^0([0,1],L(G))\to G$, $\evol(\gamma):=\eta_\gamma(1)$ is smooth (i.e., $C^\infty$).
Here, the dot denotes the natural
left action $G\times TG\to TG$ of~$G$ on its tangent bundle
(see \cite{hg2013}, \cite{hg2012}, \cite{NaS}; cf.\ \cite{Milnor1984}
and \cite{Neeb2006} for the related weaker notion
of ``regularity,'' and some of its applications).\\[2.5mm]
Let $H$ be a Banach-Lie group over
$\K\in \{\R,\C\}$, $L(H)$ be its Lie algebra,
$M$ a $\K$-analytic manifold modeled on a metrizable
locally convex space~$X$,
and $K\subseteq M$ a non-empty, compact
subset.
We write $\Germ_\K(K,H)$ for
the group of all germs $[\gamma]$ around~$K$
of $\K$-analytic maps $\gamma\colon U\to H$,
defined on an open neighborhood $U\subseteq M$ of~$K$.\\[2.4mm]
If $\K=\C$,
then $\Germ_\C(K,L(H))$
carries a natural locally convex
vector topology (see Proposition~\ref{prop_GermKZ})
making it an (LB)-space, namely
the locally convex direct limit of the Banach spaces
$\text{BHol}(U_n,L(H))$ of complex analytic, bounded
$L(H)$-valued functions on a basis $U_1\supseteq U_2\supseteq\cdots$
of neighborhoods of~$K$ in~$M$.
We show:\vspace{-1mm}
\paragraph{Theorem C} \emph{$\Germ_\C(K,H)$
can be made a $C^0$-regular complex analytic Lie group
with Lie algebra $\Germ_\C(K,L(H))$.}\\[2.5mm]
If $\K=\R$ and~$M$ is regular as a topological space,
then a complexification
$M^*$ of~$M$ can be used to make $\Germ_\C(K,L(H)_\C)$
a complex locally convex space.
We turn $\Germ_\R(K,L(H))$
into a real locally convex space
such that $\Germ_\C(K,L(H)_\C)=\Germ_\R(K,L(H))_\C$.
Then the following results can be obtained
(proofs of which will be provided in a later version
of this preprint):
%
\paragraph{Theorem D} \emph{Let $M$ be a real analytic
manifold modeled
on a metrizable locally convex space,
$K\subseteq M$ be a non-empty compact set
and~$H$ be a Banach-Lie group over $\K\in\{\R,\C\}$.
If $M$ is regular as a topological space,
then $G:=\Germ_\R(K,H)$
can be made a $C^0$-regular $\K$-analytic Lie group
with Lie algebra $\Germ_\R(K,L(H))$,
such that the evolution map
$\evol\colon C^0([0,1],L(G))\to G$ is $\K$-analytic.}\\[2.5mm]
Taking $K=M$,
the following
special case is obtained:\vspace{-1mm}
\paragraph{Theorem E} \emph{Let $M$ be a compact real analytic
manifold and $H$ be a Banach-Lie group over $\K\in \{\R,\C\}$.
Then the group $G:=C^\omega(M,H)$ of all real analytic $H$-valued
maps on~$M$ can be made a $C^0$-regular
$\K$-analytic Lie group with Lie algebra
$C^\omega(M,L(H))$,
such that the evolution map
$\evol\colon C^0([0,1],L(G))\to G$ is $\K$-analytic.}\\[2.5mm]
In the special case that $M=X$
is a metrizable locally convex space,
the Lie group structure on $\Germ_\K(K,H)$
was already provided in~\cite{hg2004}.
The $C^0$-regularity was established
in \cite[Theorem~5.1.1]{dahmen2011}, if $M=X$ is a Banach space
(using tools from \cite{dahmen2010}).
For~$M$ a finite-dimensional $\K$-analytic manifold,
the Lie group structure on~$\Germ_\K(K,H)$
was constructed in~\cite{kaur2013}.
For real analytic compact~$M$,
a Lie group structure on $C^\omega(M,H)$
was provided in~\cite{KaM},
which is real analytic in the (weaker) sense
of convenient differential calculus.\footnote{Recall from
\cite[pp.\ 104--105]{KaM} that such mappings need
not be real analytic, as they need not
admit Taylor expansions. The counter-example
is defined on an (LB)-space.}\\[2.5mm]
Recently, Neeb and Wagemann~\cite{NaW}
obtained a regular infinite-dimensional
Lie group structure on $C^\infty(\R,H)$
for each regular Lie group~$H$,
and more generally on $C^\infty(\R^n\times K,H)$
if $n\in \N$ and~$K$ is a compact smooth manifold.\footnote{Compare
\cite{pieper} for
a detailed and streamlined
discussion of the special case $C^\infty(\R,H)$ (as well as weighted analogs),
and \cite{zaareer2013} for a
treatment of $C^k(\R,H)$.}
Since $C^\infty(\R,H)=C^\infty(\R,H)_*\rtimes H$
with $C^\infty(\R,H)_*:=\{\gamma\in C^\infty(\R,H)\colon \gamma(0)=1\}$,
the main point is to make $C^\infty(\R,H)_*$
a Lie group. To achieve this,
Neeb and Wagemann noted that the left logarithmic derivative
$(\delta^\ell\gamma)(t):=\gamma(t)^{-1}.\gamma'(t)$ provides a bijection
\[
\delta^\ell\colon C^\infty(\R,H)_*\to C^\infty(\R,L(H)),\quad \gamma\mapsto \delta^\ell\gamma
\]
which can be used as a global chart for a smooth manifold
structure on $C^\infty(\R,H)_*$ that turns the group
operations into smooth maps.
Combining a variant of this idea
with our results concerning spaces of germs,
one can show:
%
\paragraph{Theorem F} \emph{If $H$ is a real Banach-Lie group,
then the group $C^\omega(\R,H)$ of all real analytic maps
$\gamma\colon \R\to H$
is a $C^0$-regular
smooth Lie group with Lie algebra
$C^\omega(\R,L(H))$.
If $\K=\C$ and $H$ is a complex
Banach-Lie group
or $\K=\R$ and $H$ is a real Banach-Lie group
such that $L(H)_\C=L(H_\C)$ for some
complex Banach-Lie group $H_\C$,
then $G:=C^\omega(\R,H)$ is a $C^0$-regular $\K$-analytic
Lie group and the evulution map
$\evol\colon C^0([0,1],L(G))\to G$ is $\K$-analytic.}\\[3mm]
The article is structured as follows:
After a preliminary Section~\ref{sec-prel},
we discuss existence and uniqueness
of complexifications in Sections~\ref{sec-unique}
and \ref{sec-exists}.
Locally convex spaces of complex analytic germs and their regularity (as a locally convex direct limit) are discussed in Section~\ref{sec-germ-compactly regular}
and then used in Section~\ref{sec-reg-cx} to construct the $C^0$-regular Lie group of complex analytic germs.
For later use, also the case of \emph{local} Lie groups~$H$ (see Definition \ref{defn_local_lie_group}) needs to be addressed there.
%
%
%
%
%
%
\section{Preliminaries and notation}\label{sec-prel}
In this section, we recall preliminaries
and fix some notation, for later use.\\[2.3mm]
By a \emph{locally convex space},
we mean a locally convex Hausdorff topological
vector space.
As usual, if~$X$ is a topological space and $Y\subseteq X$,
then a subset $U\subseteq X$ is called an \emph{open neighborhood}
of~$Y$ in~$X$ if~$U$ is open in~$X$ and $Y\subseteq U$.
\begin{defn}
 Let $E$ be a real locally convex space. Endow the real locally convex
 space $E_\C \coloneq E \times E$ with the operation 
    \begin{displaymath}
     (x+iy).(u,v) \coloneq (xu-yv, xv+yu) \quad \text{ for }
 x,y \in \R, u,v \in E.
    \end{displaymath}
 The complex locally convex space $E_\C$ obtained in this way is called the
 \emph{complexification} of $E$. We identify $E$ with the subspace
 $E \times \set{0} \subseteq E_\C$.
\end{defn}

\begin{defn}[{Complex analytic maps \cite{BS71b}}]
 Let $E$ and $F$ be complex locally convex spaces.
 A map $f \colon U \rightarrow F$ on an open subset $U \subseteq E$ is
 \emph{complex analytic} if it is continuous and admits locally a power series
 expansion around each $a \in U$, i.e.\ there exist continuous homogeneous
 polynomials
 $p_k\colon E\to F$ of degree $k$, such that 
   \begin{displaymath}
    f(x) = \sum_{k=0}^\infty p_k (x-a)
   \end{displaymath}
 pointwise for all $x$ in a neighborhood of $a$ in $U$.
\end{defn}
\noindent
See \cite{hg2002} and \cite{GaN}
for the following definition (cf.\ also \cite{Milnor1984}).
\begin{defn}[{Real analytic maps}]
Let $E$ and $F$ be locally convex spaces over~$\R$.
A map $f \colon U \rightarrow F$ on an open subset $U \subseteq E$
is called \emph{real analytic} or $C_\R^\omega$  if it extends to a complex
analytic map $g \colon W \rightarrow F_\C$ on some open neighborhood
$W \subseteq E_\C$ of $U$. 
\end{defn}
\noindent
Throughout the article, the word ``analytic manifold''
means an analytic manifold modeled on a locally convex space
(as discussed, e.g., in \cite{hg2002} of \cite{GaN}).
\begin{defn}
If~$F$ is a complex locally convex space,
we write $F_{\op}$ for~$F$, endowed with the opposite
complex structure
(thus multiplication with~$i$ on~$F_{\op}$ is given by multiplication
with $-i$ in~$F$).
If $M$ is a complex analytic manifold modeled on~$F$,
with atlas ${\mathcal A}$ of charts $\phi\colon U_\phi\to V_\phi\subseteq F$,
we write $M_{\op}$ for~$M$,
endowed with the complex analytic manifold
structure given by atlas of charts
$\phi\colon U_\phi\to V_\phi\subseteq F_{\op}$
for $\phi\in{\mathcal A}$.
A mapping $f\colon M\to N$
between complex analytic manifolds is called
\emph{anti-holomorphic} if it is complex analytic as
a map $M\to N_{\op}$ (or, equivalently,
as a map $M_{\op}\to N$).
\end{defn}
\begin{rem}
Let $M$ be a complex analytic manifold
modeled on a complex locally convex space~$F$
such that $F=E_\C$ for a real
locally convex space~$E$.
Let $\tau\colon E_\C\to E_\C$ denote the
complex conjugation given by $\tau(x+iy):=x-iy$
for all $x,y\in E$.
Then $\tau\colon (E_\C)_{\op}\to E_\C$
is an isomorphism of complex
locally convex spaces.
The maps
$\tau\circ \phi\colon U_\phi\to \tau(V_\phi)\subseteq F$
form an atlas for $M_{\op}$ (modeled on~$F$),
if we let $\phi\colon U_\phi\to V_\phi\subseteq F$
range through an atlas for~$M$.
\end{rem}
\noindent
This readily entails:
\begin{rem}\label{complex-op}
If~$M$ is a real analytic manifold modelled on~$E$
and $M^*$ a complexification of~$M$,
then also $(M^*)_{\op}$ is a complexification of~$M$.
Indeed, if $x\in M$ and $\phi\colon U^*\to U'$
is a chart of~$M^*$ around~$x$ with $\phi(M\cap U^*)=E\cap U'$,
then $\tau\circ \phi\colon U^*\to \tau(U')$
is a chart of $(M^*)_{\op}$ around~$x$ which takes
$M\cap U^*$ onto $E\cap \tau(U')$.
\end{rem}
\noindent
We recall a version of the well-known Identity Theorem
for analytic functions (cf.\ \cite{GaN}).
\begin{lem}[Identity Theorem]\label{idthm}
\begin{itemize}
\item[{\rm(a)}]
Let $M$ and $N$ be $\K$-analytic manifolds
modeled on locally convex spaces
$($where $\K\in \{\R,\C\})$
and $f_j \colon M\to N$ be $\K$-analytic maps
for $j\in \{1,2\}$.
If $M$ is connected and $f_1|_U=f_2|_U$
for some non-empty open set $U\subseteq M$,
then $f_1=f_2$.
\item[{\rm(b)}]
If $E$ and $F$ are real locally convex spaces,
$U\subseteq E_{\mathbb C}$ an open connected subset
with $U\cap E\not=\emptyset$
and $f_j \colon U\to F_\C$
complex analytic mappings for $j\in \{1,2\}$
such that $f_1|_{E\cap U}
= f_2|_{E\cap U}$, then $f_1=f_2$.
\end{itemize}
\end{lem}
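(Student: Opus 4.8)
The plan is to prove both parts from a single \emph{local} statement: if $h\colon V\to F$ is a $\K$-analytic map on a connected open subset $V$ of a locally convex space and $h$ vanishes on a non-empty open subset of $V$, then $h\equiv 0$. First I would reduce (a) to this. Define
\[
A:=\{x\in M:\ f_1 \text{ and } f_2 \text{ agree on some neighborhood of } x\}.
\]
By construction $A$ is open, and $A\supseteq U\neq\emptyset$. It remains to see that $A$ is closed, for then connectedness of $M$ forces $A=M$ and hence $f_1=f_2$. Given $x_0\in\overline{A}$, continuity yields $f_1(x_0)=f_2(x_0)$ (nearby points lie in $A$), so I may pick a chart $\psi$ of~$N$ about this common value and a chart $\phi$ of~$M$ about $x_0$ with $f_1,f_2$ mapping a neighborhood of $x_0$ into the domain of~$\psi$; comparing $h:=\psi\circ f_1\circ\phi^{-1}-\psi\circ f_2\circ\phi^{-1}$ turns the question into the local statement for a vector-space-valued map.

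For the local statement, I would use that a $\K$-analytic map admits, around each point~$x$, a local expansion $h(y)=\sum_{k\ge 0}p_k^{(x)}(y-x)$ into continuous homogeneous polynomials $p_k^{(x)}$ (for $\K=\R$, obtained by restricting the expansion of a complex analytic extension). Let $B:=\{x\in V:\ p_k^{(x)}=0 \text{ for all } k\}$. If all coefficients vanish at~$x$ then $h$ vanishes on a neighborhood of~$x$, and conversely; hence $B=A$, so $B$ is open. On the other hand $p_k^{(x)}$ is, up to a factor, the $k$-th differential $\di^k h(x)$, which depends continuously on~$x$, so $B$ is an intersection of closed sets and therefore closed. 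Thus $A=B$ is clopen, which completes~(a).

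Part~(b) is not a special case of~(a), because agreement is only assumed on the totally real set $E\cap U$. Here I would bootstrap to local vanishing by one-dimensional slices. Fix $z_0\in E\cap U$ and put $h:=f_1-f_2$, with local expansion $h(z_0+w)=\sum_k p_k(w)$, the $p_k\colon E_\C\to F_\C$ being continuous complex homogeneous polynomials. For $v\in E$ the function $g_v(t):=h(z_0+tv)$ is complex analytic in~$t$ on a small disc and vanishes for real~$t$ near~$0$ (since then $z_0+tv\in E\cap U$); by the classical one-variable Identity Theorem $g_v\equiv 0$, so $p_k(v)=0$ for every $v\in E$. Polarizing the associated symmetric complex $k$-linear map and using $E_\C=E+iE$ shows $p_k=0$ on all of~$E_\C$. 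Hence $h$ vanishes on a neighborhood of~$z_0$, and part~(a) applied on the connected set~$U$ yields $h=0$, i.e.\ $f_1=f_2$.

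The main obstacle I anticipate is the closedness step, specifically verifying in the locally convex setting that the Taylor-coefficient maps $x\mapsto \di^k h(x)$ are genuinely continuous and that vanishing of all of them forces local vanishing of~$h$; both rely on the power-series description of analyticity from \cite{BS71b,GaN} rather than on finite-dimensional compactness arguments. In part~(b) the comparatively delicate point is the passage from vanishing of each $p_k$ on the real form~$E$ to vanishing on~$E_\C$, which requires the multilinear polarization identity together with complex multilinearity, rather than a naive real-linear extension.
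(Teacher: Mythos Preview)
The paper does not actually prove this lemma; it is stated with a bare citation (``cf.\ \cite{GaN}'') and then used as a black box throughout. So there is no proof in the paper to compare your argument against.

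Your sketch is the standard argument and is correct. Two minor remarks on presentation. First, when you write ``$B=A$'' after passing to a chart you are conflating the global set $A\subseteq M$ with its local incarnation in the model space; it is cleaner to prove the local clopen statement on a connected open $V$ in the model space first, and then, to show $A$ is closed in~$M$, pick a \emph{connected} chart domain around $x_0\in\overline{A}$ and apply the local statement there (some point of~$A$ lies in that domain, giving the required non-empty open set on which $h$ vanishes). Second, for closedness of~$B$ you only need that for each fixed tuple $(v_1,\dots,v_k)$ the map $x\mapsto \di^k h(x)(v_1,\dots,v_k)$ is continuous, which holds because analytic maps are $C^\infty$; then $B$ is an intersection of zero sets of continuous $F$-valued functions, and no topology on spaces of multilinear maps is needed. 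Your slicing-and-polarization argument for~(b) is exactly the right way to pass from vanishing on the totally real subset $E\cap U$ to local vanishing on an open set in~$E_\C$.
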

\noindent
A subset $M$ of a topological space~$X$ is called
\emph{locally closed}
if each $x\in M$ has a neighborhood $U\subseteq X$
such that $U\cap M$ is a relatively closed subset of~$U$.
For example, every closed subset of~$X$ is locally closed.
A real analytic manifold~$M$ admitting a complexification $M^*$
(with $M\subseteq M^*$) is always locally closed in~$M^*$.
In fact, for $x\in M$ and $\psi\colon U^*\to U'$
as in the above definition, the set $E\cap U'$ is relatively closed in~$U'$
(as~$E$ is closed in~$E_{\mathbb C}$)
and thus $M\cap U^*=\psi^{-1}(E\cap U')$
is relatively closed in~$U^*$.\\[3mm]
Complexifications were already defined in the introduction.
In Section~\ref{sec-exists}, we shall find it
convenient to loosen
the concept of a complexification (by allowing passage
to diffeomorphic copies of complexifications in
the earlier sense):\footnote{Outside Section~\ref{sec-exists},
the earlier definition will be retained to simplify the notation
(without loss of\linebreak
\hspace*{-1.4mm}mathematical substance).}
\begin{defn}\label{cxgeneral}
Let $\Omega$ be a real analytic manifold modeled on a real
locally convex space~$E$. A \emph{complexification} of $\Omega$ is a
pair $(\Omega^*,\varphi)$ such that 
  \begin{itemize}
   \item $\Omega^*$ is a complex analytic manifold modeled on~$E_\C$,
   \item $\varphi \colon \Omega \rightarrow \varphi (\Omega)
   \subseteq \Omega^*$ is a real analytic
   diffeomorphism
   onto a real analytic submanifold $\varphi (\Omega)$ of $\Omega^*$,
   \item for each $x\in \Omega$,
   there are an open neighborhood $U^*$ of $x$ in $\Omega^*$ and a
   complex analytic
   diffeomorphism $U^*\to U'$ onto an open subset $U'\subseteq E_{\mathbb C}$
   which takes $\varphi(\Omega)\cap U^*$ onto 
   $E\cap U'$.
  \end{itemize}
\end{defn}
\section{Uniqueness of complexifications}\label{sec-unique}
In this section, we discuss the existence
of complex analytic extensions of real
analytic mappings between real
analytic manifolds to open neighborhoods
in given complexifications.
As a consequence, we shall obtain
a proof for Theorem~B.
We begin with purely topological considerations,
which are the foundation for the extension results.
\begin{lem}\label{toplemma}
Let $X$ be a Hausdorff topological space
and $(U_i)_{i\in I}$
be a family of open subsets of~$X$.
\begin{itemize}
\item[{\rm(a)}]
If  $X$ is regular, $M$ a compact subset of~$X$
and $(U_i)_{i\in I}$ a cover of~$M$,
then there exists a cover $(W_j)_{j\in J}$
of~$M$ by open sets $W_j\subseteq X$ such that
\begin{equation}\label{meeting}
(\forall j,k\in J) \quad W_j\cap W_k\not=\emptyset \;\;\Rightarrow\;\;
(\exists i\in I)\;\; W_j\cup W_k\subseteq U_i.
\end{equation}
\item[{\rm(b)}]
If  $X$ is paracompact and $(U_i)_{i\in I}$ a cover of~$X$,
then there exists an open cover $(W_j)_{j\in J}$
of~$X$ such that \emph{(\ref{meeting})} holds.
\item[{\rm(c)}]
If $X$ is paracompact, $M$ a closed subset of~$X$ and
$(U_i)_{i\in I}$ a cover of~$M$,
then there exists a cover $(W_j)_{j\in J}$
of~$M$ by open subsets~$W_j$ of~$X$ such that \emph{(\ref{meeting})} holds.
\item[{\rm(d)}]
If~$M$ is a locally closed subset of~$X$,
every open neighborhood of~$M$ in~$X$ contains a paracompact open neighborhood
of~$M$ in~$X$ and $(U_i)_{i\in I}$ is a cover of~$M$,
then there exists a cover $(W_j)_{j\in J}$
of~$M$ by open subsets~$W_j$ of~$X$ such that \emph{(\ref{meeting})} holds.
\end{itemize}
\end{lem}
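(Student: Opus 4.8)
The plan is to prove part (a) by an explicit construction from regularity and compactness, and then to derive parts (b), (c) and (d) from a single external input: the classical characterization of paracompactness by barycentric refinements. Throughout, the guiding observation is that condition (\ref{meeting}) is a weakening of the barycentric refinement condition. Recall that an open cover $(W_j)_{j\in J}$ is a \emph{barycentric refinement} of $(U_i)_{i\in I}$ if for every $x\in X$ the star $\bigcup\{W_j : x\in W_j\}$ is contained in some $U_i$. If $(W_j)$ is such a refinement and $W_j\cap W_k\ni x$, then both $W_j$ and $W_k$ lie in the star of~$x$, hence in a common $U_i$, so $W_j\cup W_k\subseteq U_i$ and (\ref{meeting}) holds. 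Thus, in the paracompact cases, it suffices to produce a barycentric refinement.

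For part (a), I would first pass to a finite subcover $U_{i_1},\dots,U_{i_n}$ of the compact set~$M$, and then, using regularity of~$X$ together with compactness, shrink it: one obtains open sets $V_1,\dots,V_n$ in~$X$ with $\overline{V_m}\subseteq U_{i_m}$ and $M\subseteq\bigcup_{m=1}^n V_m$ (cover $M$ by finitely many open sets whose closures sit inside some~$U_{i_m}$, then group them according to~$m$). For $x\in M$ set $S(x):=\{m : x\in\overline{V_m}\}$ and define the open set
\[
W_x:=\Big(\bigcap_{m\in S(x)}U_{i_m}\Big)\cap\Big(\bigcap_{m\notin S(x)}(X\setminus\overline{V_m})\Big).
\]
One checks immediately that $x\in W_x$ (using $\overline{V_m}\subseteq U_{i_m}$ for $m\in S(x)$), so the $W_x$ cover~$M$; note there are only finitely many distinct such sets, indexed by the subsets of $\{1,\dots,n\}$ arising as some $S(x)$. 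To verify (\ref{meeting}), suppose $z\in W_x\cap W_y$. As $z\in M$ there is $m_0$ with $z\in V_{m_0}\subseteq\overline{V_{m_0}}$; were $m_0\notin S(x)$, then $W_x\subseteq X\setminus\overline{V_{m_0}}$, contradicting $z\in W_x$, so $m_0\in S(x)$ and likewise $m_0\in S(y)$. By construction $W_x\cup W_y\subseteq U_{i_{m_0}}$, which is (\ref{meeting}).

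For part (b) I would simply invoke that a Hausdorff paracompact space admits an open barycentric refinement of every open cover (\cite{Engelking1989}); the observation in the first paragraph then yields (\ref{meeting}). Part (c) reduces to (b): adjoin $U_0:=X\setminus M$ to the cover to obtain an open cover of all of~$X$, apply (b) to get an open cover $(W_j)_{j\in J}$ of~$X$ satisfying (\ref{meeting}) relative to $(U_i)_{i\in I\cup\{0\}}$, and discard those~$W_j$ disjoint from~$M$. The surviving family still covers~$M$; and if two surviving sets meet, the index~$i$ furnished by (\ref{meeting}) cannot be~$0$ (a set contained in $U_0$ misses~$M$), so it lies in~$I$, giving (\ref{meeting}) for the original cover. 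For part (d), local closedness of~$M$ provides an open set $O:=X\setminus(\overline{M}\setminus M)$ in which~$M$ is closed; applying the neighborhood hypothesis to $O\cap\bigcup_{i\in I}U_i$ yields a paracompact open neighborhood~$P$ of~$M$ with $P\subseteq O$ and $P\subseteq\bigcup_{i\in I}U_i$. Then~$M$ is closed in the Hausdorff paracompact space~$P$ and $(U_i\cap P)_{i\in I}$ is an open cover of~$M$ in~$P$, so part (c) applied inside~$P$ produces the desired~$(W_j)$; since~$P$ is open in~$X$ the~$W_j$ are open in~$X$, and $W_j\cup W_k\subseteq U_i\cap P\subseteq U_i$ gives (\ref{meeting}).

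The main obstacle is really only part (a): the explicit intersection construction and the verification that $z\in V_{m_0}$ forces $m_0\in S(x)\cap S(y)$. The remaining parts are routine reductions once the link between (\ref{meeting}) and barycentric refinements is noticed, the only genuinely deep ingredient being the (imported) barycentric-refinement characterization of paracompactness used in~(b).
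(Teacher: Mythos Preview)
Your proofs of (b), (c), and (d) are correct. For (b) you take a shortcut the paper does not: you invoke the barycentric-refinement characterization of paracompactness and observe that barycentric refinement implies (\ref{meeting}), whereas the paper reproves this by an explicit construction parallel to its part~(a), using a locally finite shrinking. Your reductions in (c) and (d) are essentially the same as the paper's.

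Part (a), however, has a genuine gap. You write ``suppose $z\in W_x\cap W_y$. As $z\in M$\ldots'' --- but nothing forces $z\in M$. The sets $W_x$ are open in $X$, and (\ref{meeting}) must be checked for \emph{any} nonempty intersection, not merely those meeting~$M$. Concretely, take $X=\mathbb{R}^2$, $M=\{(0,0),(2,0),(1,2)\}$, and let $U_1,U_2,U_3$ be open balls of radius $1.3$ about these three points. These pairwise intersect but each contains exactly one point of~$M$. Shrinking to $V_m$ equal to the ball of radius $0.1$ about the $m$-th point, one finds $S((0,0))=\{1\}$, $S((2,0))=\{2\}$, and your formula gives $W_{(0,0)}=U_1$, $W_{(2,0)}=U_2$ (the excised closures $\overline{V_2},\overline{V_3}$ do not meet $U_1$, etc.). Then $W_{(0,0)}\cap W_{(2,0)}=U_1\cap U_2\neq\emptyset$, yet $U_1\cup U_2$ is contained in no $U_i$. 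So your construction fails.

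The underlying issue is that intersecting with $U_{i_m}$ for $m\in S(x)$ is too loose: it does not force a point of $W_x\cap W_y$ to lie in any $\overline{V_m}$, which is what your argument needs. The paper's construction avoids this by splitting into three cases $x\in A_z^0$, $x\in\partial A_z$, $x\notin A_z$ and, crucially, intersecting with the \emph{interior} $A_z^0$ (not the ambient $U_{i_z}$) in the first case. This guarantees $W_x\subseteq A_{z(x)}$ for a suitable $z(x)$, so that $W_x\cap W_y\neq\emptyset$ forces $y\in A_{z(x)}$ directly, with no appeal to $M$. Your argument can be repaired along the same lines by refining the first intersection.
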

\begin{proof}
(a) For each $x\in M$, there is $i_x\in I$ such that $x\in U_{i_x}$.
There is a closed neighborhood~$A_x$ of~$x$ in~$X$ such that
$A_x\subseteq U_{i_x}$,
by regularity of~$X$. By compactness of~$M$, we find a finite
subset $Z \subseteq M$
such that $M\subseteq \bigcup_{z\in Z}A_z^0$,
where $A_z^0$ means the interior of $A_z$ in~$X$.
For $x\in M$, we define
\[
W_x\; := \; \bigcap_{x\in A_z^0}A_z^0
\cap \bigcap_{x\in \partial A_z}U_{i_z}
\cap \bigcap_{x\not\in A_z} (X\setminus A_z),
\]
where the indices range through all $z\in Z$ with the respective property,
and intersections over empty index sets are defined as~$X$.
There is $z(x)\in Z$ such that $x\in A_{z(x)}^0$.
Then $W_x\subseteq A_{z(x)}^0\subseteq U_{i_{z(x)}}$.
If $y\in M$ and $W_x\cap W_y\not=\emptyset$,
then $y\in A_{z(x)}$.
In fact, if this was wrong,
then $y\not\in A_{z(x)}$,
entailing that $W_y\subseteq X\setminus A_{z(x)}$.
Since $W_x\subseteq A_{z(x)}$,
we deduce that $W_x\cap W_y=\emptyset$,
a contradiction.
Now $y\in A_{z(x)}$
entails that $y\in A_{z(x)}^0$ or
$y\in \partial A_{z(x)}$.
In the first case, $W_y\subseteq A^0_{z(x)}\subseteq U_{i_{z(x)}}$;
in the the second case,
$W_y\subseteq U_{i_{z(x)}}$.
Thus $W_x\cup W_y\subseteq U_{i_{z(x)}}$. Hence $(W_x)_{x\in M}$
has the desired properties.

(b) For each $x\in X$, there is $i_x\in I$ such that $x\in U_{i_x}$.
As every paracompact space is normal and hence regular,
there is a closed neighborhood~$B_x$ of~$x$ in~$X$ such that
$B_x\subseteq U_{i_x}$.
Since~$X$
is paracompact, there is a locally finite open cover
$(V_z)_{z\in Z}$ of~$X$ subordinate to $(B_x^0)_{x\in X}$.
Thus, for each $z \in Z$, there is $x(z)\in X$ such that
$V_z\subseteq B_{x(z)}^0$.
For the closure $A_z:= \overline{V_z}$
in~$X$, we get 
$A_z\subseteq B_{x(z)}\subseteq U_{i_{x(z)}}$.
For $x\in X$, define
\begin{equation}\label{intersect}
W_x\; := \; \bigcap_{x\in V_z } V_z
\cap \bigcap_{x\in \partial V_z}U_{i_{x(z)}}
\cap \bigcap_{x\not\in A_z} (X\setminus A_z).
\end{equation}
Note that, since the family $(V_z)_{z\in Z}$
is locally finite, also the family $(A_z)_{z\in Z}$
of the closures is locally finite \cite[Theorem 1.1.13]{Engelking1989}.
Therefore the first two intersections
in (\ref{intersect}) are finite intersections.
Moreover,
$\bigcup_{x\not\in A_z} A_z$
is closed as a locally finite union of closed sets
(see \cite[Corollary 1.1.12]{Engelking1989}).
Therefore its complement
$\bigcap_{x\not\in A_z} (X\setminus A_z)$
is open and hence~$W_x$
is an open neighborhood of~$x$.
Choose $z\in Z$ such that $x\in V_z$.
We now see as in the proof of~(a)
that $W_x\cap W_y\not=\emptyset$ implies
$W_x\cup W_y\subseteq U_{i_{x(z)}}$.

(c) Let ${\mathcal V}$ be the open cover of~$X$
obtained by joining the sets $(U_i)_{i\in I}$ and the open set $X\setminus M$.
Applying (b) to ${\mathcal V}$,
we find an open cover $(W_j)_{j\in J'}$
of~$X$ such that $W_j\cap W_k\not=\emptyset$
for $j,k\in J'$
implies that $W_j\cup W_k  \subseteq U$ for some set $U$ in ${\mathcal V}$.
Note that $U\not= X\setminus M$ and thus
$U=U_i$ for some $i\in I$
if we assume that $W_j$ and $W_k$ meet~$M$.
Thus, if we define
\[
J:=\{j\in J'\colon W_j\cap M \not=\emptyset\},
\]
then $(W_j)_{j\in J}$ has the desired properties.

(d) Since $M$ is locally closed, each $x\in M$ has an open neighborhood
$P_x$ in~$X$ such that $M\cap P_x$ is relatively closed
in~$P_x$. Thus $P_x\setminus M$ is open in $X$.
The set $\bigcup_{x\in M} P_x$ is an open neighborhood
of~$M$ in~$X$.
By hypothesis, there exists a paracompact open neighborhood $P\subseteq
\bigcup_{x\in M} P_x$ of~$M$ in~$X$.
After replacing~$P_x$ with $P_x\cap P$,
we may assume that $P=\bigcup_{x\in M} P_x$.
Let ${\mathcal V}$ be the open cover of~$P$
obtained by joining the sets $(P\cap U_i)_{i\in I}$
and the open set $P\setminus M =
\bigcup_{x\in M} (P_x\setminus M)$.
Applying (b) to $P$ and ${\mathcal V}$ in place of~$X$ and $(U_i)_{i\in I}$,
we find an open cover $(W_j)_{j\in J'}$
of~$P$ such that $W_j\cap W_k\not=\emptyset$
for $j,k\in J'$
implies that $W_j\cup W_k  \subseteq U$ for some set $U$ in ${\mathcal V}$.
Note that $U\not= P\setminus M$ and thus
$U=P\cap U_i\subseteq U_i$ for some $i\in I$
if we assume that $W_j$ and $W_k$ meet~$M$.
Thus, if we define
$J:=\{j\in J'\colon W_j\cap M \not=\emptyset\}$,
then $(W_j)_{j\in J}$ has the desired properties.
\end{proof}
\begin{lem}\label{extensions}
Let $M$ and $N$ be real analytic manifolds modeled on locally convex spaces
and $f\colon M\to N$ be a real analytic map.
Let $M^*$ and $N^*$
be complexifications of~$M$ and~$N$,
respectively,
such that $M\subseteq M^*$ and $N\subseteq N^*$.
\begin{itemize}
\item[{\rm(a)}]
If $K\subseteq M$ is compact and
$M^*$ is a regular topological
space, then there exists a complex
analytic map $g \colon U\to N^*$,
defined on an open neighborhood~$U$ of~$K$ in~$M^*$,
such that $g|_{U\cap M}=f|_{U\cap M}$.
\item[{\rm(b)}]
If every open neighborhood of~$M$ in~$M^*$
contains a paracompact open neighborhood of~$M$ in~$M^*$,
then there exists a complex
analytic extension $g \colon U\to N^*$ of~$f$,
defined on an open neighborhood~$U$ of~$M$ in~$M^*$.
\item[{\rm(c)}]
If $M^*$ is paracompact and $M$ is closed in~$M^*$,
then there exists a complex
analytic extension $g \colon U\to N^*$ of~$f$,
defined on an open neighborhood~$U$ of~$M$ in~$M^*$.
\item[{\rm(d)}]
If $g_j\colon U_j\to N^*$
are complex analytic extensions of~$f$ for $j\in\{1,2\}$,
defined on open neighborhoods~$U_j$ of~$M$ in~$M^*$,
then the union~$U$ of all connected components~$C$ of~$U_1\cap U_2$ such that
$C\cap M\not=\emptyset$
is an open neighborhood of~$M$ in~$M^*$,
and $g_1|_U=g_2|_U$.
\item[{\rm(e)}]
Let $f$ be a real analytic diffeomorphism in the sense
that $f^{-1}$ exists and is real analytic.
Assume
that~$f$ admits a complex analytic extension
$g\colon U\to N^*$,
defined on an open neighborhood~$U$ of~$M$ in~$M^*$, and
that~$f^{-1}$ admits a complex analytic extension
$h\colon V\to M^*$,
defined on an open neighborhood~$V$ of~$N$ in~$N^*$.
Then there exist an open neighborhood~$W$ of~$M$ in~$U$
such that~$g(W)$ is an open subset of~$N^*$
contained in~$V$, and $g|_W\colon W\to g(W)$
is a complex analytic diffeomorphism with inverse~$h|_{g(W)}$.
\end{itemize}
\end{lem}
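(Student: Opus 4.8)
The common engine for (a)--(c) is to manufacture local complex analytic extensions and then glue them by means of the overlap property~(\ref{meeting}) of Lemma~\ref{toplemma} together with the Identity Theorem. First I would produce, for each $x$ in the relevant set ($K$ in case (a), $M$ in cases (b),(c)), a local extension. Choosing complexification charts $\psi\colon U^*\to U'\subseteq E_\C$ of $M^*$ around $x$ and $\phi\colon V^*\to V'\subseteq F_\C$ of $N^*$ around $f(x)$, with $\psi(M\cap U^*)=E\cap U'$ and likewise for $\phi$, the map $f$ reads in charts as a real analytic map $E\cap U'\to F$, which by definition extends to a complex analytic map on an open neighborhood in $E_\C$. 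Transporting back (and shrinking the domain so that the image stays inside the chart $V^*$, using continuity and $\phi(f(x))\in V'$) yields a complex analytic $g_x\colon U_x\to N^*$ on an open neighborhood $U_x$ of $x$ with $g_x|_{U_x\cap M}=f|_{U_x\cap M}$.

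Next I would feed the open cover $(U_x)$ into the appropriate part of Lemma~\ref{toplemma}: part (a) for the compact case, using regularity of $M^*$; part (d) for case (b), noting that $M$ is locally closed in $M^*$ (as recorded before Definition~\ref{cxgeneral}) and that the paracompactness hypothesis of (b) is precisely the one required by \ref{toplemma}(d); and part (c) for case (c). This gives a cover $(W_j)$ of the relevant set satisfying~(\ref{meeting}). Since $E_\C$, and hence $M^*$, is locally connected, I may pass to the connected components of the $W_j$ that meet the relevant set: these are open, connected, meet $M$, still cover it, and inherit~(\ref{meeting}). For such a component $W_j$ I pick $i(j)$ with $W_j\subseteq U_{i(j)}$ (possible since~(\ref{meeting}) with $k=j$ gives $W_j\subseteq U_i$ for some $i$) and set $g|_{W_j}:=g_{i(j)}|_{W_j}$.

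The heart of the matter is well-definedness, and this is the step I expect to be the main obstacle. If $W_j\cap W_k\neq\emptyset$, then~(\ref{meeting}) yields $W_j\cup W_k\subseteq U_i$, so that $g_i$ is defined on both pieces. I would prove $g_{i(j)}=g_i$ on $W_j$ by a two-step use of the Identity Theorem: near a point $p\in M\cap W_j$ the two maps agree on the real points $M$, so reading them through charts of $N^*$ and $M^*$ and applying Lemma~\ref{idthm}(b) shows they agree on a whole open neighborhood of $p$; then Lemma~\ref{idthm}(a), applied on the connected manifold $W_j$, propagates the agreement to all of $W_j$. The same holds for $W_k$, so both local pieces coincide with $g_i$ on the overlap, and $g$ is a well-defined complex analytic extension of $f$ on $U:=\bigcup_j W_j$, an open neighborhood of $K$ (resp.\ $M$). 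Part (d) is the same identity-theorem argument applied directly: $U_1\cap U_2$ is an open neighborhood of $M$, the union $U$ of its connected components meeting $M$ is open and contains $M$, and on each such component $g_1$ and $g_2$ agree on the non-empty real part, hence (by the two-step argument) everywhere on $U$.

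For (e) the plan is to avoid any inverse function theorem and instead exploit the given extension $h$ of $f^{-1}$. On $U':=U\cap g^{-1}(V)$, which is an open neighborhood of $M$ since $g(M)=f(M)=N\subseteq V$, the composite $h\circ g$ is complex analytic and extends $\id_M$; comparing it with $\id_{U'}$ by the argument of (d) gives $h\circ g=\id$ on an open neighborhood $W_1\supseteq M$, and symmetrically $g\circ h=\id$ on an open neighborhood $W'\supseteq N$ with $W'\subseteq V$. Setting $W:=W_1\cap g^{-1}(W')$, which still contains $M$ and lies in $U$, one checks directly that $g(W)=W'\cap h^{-1}(W)$ is open, that $g|_W$ is injective with $h\circ g=\id_W$, and that $h|_{g(W)}$ is a two-sided inverse; hence $g|_W\colon W\to g(W)\subseteq V$ is a complex analytic diffeomorphism with inverse $h|_{g(W)}$, as required. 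Throughout, the delicate point remains the passage from equality on the real part $M$ to equality on an open set, which is exactly what the combination of Lemma~\ref{idthm}(b) in charts and Lemma~\ref{idthm}(a) by connectedness provides.
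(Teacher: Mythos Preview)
Your proposal is correct and follows essentially the same approach as the paper: local extensions are produced via complexification charts, the appropriate part of Lemma~\ref{toplemma} supplies a refined cover with the overlap property~(\ref{meeting}), and the Identity Theorem glues the pieces; part~(e) is likewise handled by comparing $h\circ g$ and $g\circ h$ with the identity via the argument of~(d). Your bookkeeping in~(e) (intersecting $W_1$ with $g^{-1}(W')$ and identifying $g(W)=W'\cap h^{-1}(W)$) is a slight variant of the paper's, which instead takes $W$ to be the union of the components of $g^{-1}(V)$ meeting $M$ and then shows $g(W)$ equals the analogous set $Q$ on the $N$-side, but both arrangements yield the same conclusion.
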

\begin{proof}
Let $E$ be the modeling space of~$M$ and~$F$ be the modeling space of~$N$.

(a) For $x\in K$, let $\psi_x \colon U_x^*\to U'_x$ be a chart of~$M^*$
around~$x$ and $\theta_x\colon V_x^*\to V'_x$ be a chart of~$N^*$
around~$f(x)$ such that $\psi_x(M\cap U^*_x)=E\cap U'_x$
and $\theta_x(N\cap V^*_x)=F\cap V'_x$.
After shrinking~$U_x$, we may assume that
$f(M\cap U_x^*)\subseteq V_x^*$. Thus
\begin{equation}\label{re-use}
E\cap U'_x\to F\cap V_x',\quad
y\mapsto \theta_x (f(\psi_x^{-1}(y)))
\end{equation}
is a real analytic map,
which admits a complex analytic extension
$h_x\colon P_x\to V_x'$ on some
open neighborhood $P_x$ of $E\cap U'_x$ in~$U'_x$.
Then the open sets $U_x:=\psi_x^{-1}(P_x)\subseteq M^*$,
for $x\in K$, form a cover of~$K$, and the maps
\[
g_x:= \theta_x^{-1}\circ h_x\circ \psi_x|_{U_x}\colon U_x\to V_x^*\subseteq N^*
\]
are complex analytic and satisfy $g_x|_{M\cap U_x}=
f|_{M\cap U_x}$. Using Lemma~\ref{toplemma}\,(a),
we find a cover $(W_j)_{j\in J}$ of~$K$
by open subsets of~$M^*$ such that
$W_j\cap W_k\not=\emptyset$
implies the existence of some $x(j,k)\in K$
such that $W_j\cup W_k\subseteq U_{x(j,k)}$.
After replacing the sets $W_j$ by smaller neighborhoods
of the points in $M\cap W_j$, we may assume that each~$W_j$
is connected and $M\cap W_j\not=\emptyset$.
Define
\[
g_j:=g_{x(j,j)}|_{W_j}\colon W_j\to N^*.
\]
If $W_j\cap W_k\not=\emptyset$,
we have
\begin{equation}\label{little}
g_j=g_{x(j,k)}|_{W_j}
\end{equation}
by the Identity Theorem (Lemma~\ref{idthm}),
because $W_j$ is connected and both sides of~(\ref{little})
are complex analytic extensions
of $f|_{M\cap W_j}$ (where $M\cap W_j\not=\emptyset$).
Likewise, $g_k=g_{x(j,k)}|_{W_k}$, and thus
\[
g_j|_{W_j\cap W_k}=g_{x(j,k)}|_{W_j\cap W_k}= g_k|_{W_j\cap W_k}.
\]
Now $U:=\bigcup_{j\in J}W_j$ is an open neighborhood
of~$K$ in~$M^*$. By the preceding, the map
$g\colon U\to N^*$ defined via $g(z):=g_j(z)$ if $z\in W_j$
is well defined. By construction, it is a complex analytic extension
of $f|_{M\cap U}$.

(b) Using Lemma~\ref{toplemma}\,(d) instead of Lemma~\ref{toplemma}\,(a),
we can repeat the proof of~(a) with $K:=M$.

(c) Using Lemma~\ref{toplemma}\,(c) instead of Lemma~\ref{toplemma}\,(a),
we can repeat the proof of~(a) with $K:=M$.

(d) Since each~$C$ (as in the statement) is connected and $g_1|_{M\cap C}=g_2|_{M\cap C}$
with $M\cap C\not=\emptyset$, the Identity Theorem
shows that $g_1|_C=g_2|_C$. The assertion follows.

(e) Let $W$ be the union of all connected components~$C$
of the open set $g^{-1}(V)$ such that $M\cap C\not=\emptyset$.
Then $h\circ g|_W$
defines a complex analytic map $W\to M^*$
such that $h(g(x))=x$ for all $x\in M\cap W$.
Hence $h\circ g|_W=\id_W$, by the Identity Theorem.
In particular, $W\subseteq h(V)$,
and $g|_W\colon W\to g(W)$ is a bijection with $(g|_W)^{-1}=h|_{g(W)}$.
Let~$Q$ be the union of all connected components~$D$  of $h^{-1}(W)$
such that $N\cap D\not=\emptyset$.
Arguing as before, we see that $g|_W\circ h|_Q=\id_Q$.
In particular, $Q\subseteq g(W)$.
If $C\subseteq W$ is as above, then $g(C)$ is a connected subset of~$N^*$
such that $N \cap g(C)\not=\emptyset$.
Since $h(g(C))=C\subseteq W$, we have $g(C)\subseteq h^{-1}(W)$
and thus $g(C)\subseteq D$ for some~$D$ as above.
Hence $g(C)\subseteq Q$, entailing that $C=h(g(C))\subseteq h(Q)$.
We deduce that $W=h(Q)$. Thus $h|_Q\colon Q\to W$
is a bijection with inverse~$g|_W$.
\end{proof}
\noindent
{\bf Proof of Theorem B.}
(a) By Lemma~\ref{extensions}\,(c), the real analytic map $\id_M\colon M\to M$
extends to complex analytic mappings
$g\colon U\to M_2^*$
and $h\colon V\to M_1^*$,
defined on open neighborhoods $U\subseteq M_1^*$ and $V\subseteq M_2^*$,
respectively, of~$M$.
By Lemma~\ref{extensions}\,(e), after shrinking~$U$ and~$V$,
the map~$g$ is a complex analytic diffeomorphism
from~$U$ onto~$V$ (with inverse~$h$).

(b) Applying Lemma~\ref{extensions}\,(a)
to the identity map $\id_M\colon M\to M$,
we find open neighborhoods $U$ and $V$ of~$K$
in~$M_1^*$ and $M_2^*$, respectively,
and complex analytic mappings
$g\colon U\to M_2^*$ and $h\colon V\to M_1^*$
such that $g|_{M\cap U}=\id_{M\cap U}$
and
$h|_{M\cap V}=\id_{M\cap V}$.
Then $N:=(M\cap U)\cap (M\cap V)$
is an open neighborhood of~$K$ in~$M$,
and both~$M_1^*$ and $M_2^*$
are complexifications of~$N$.
By Lemma~\ref{extensions}\,(e), after shrinking $U$ and~$V$,
the map $g$ is a complex analytic diffeomorphism
from $U$ onto~$V$ (with inverse~$h$).\Punkt
\begin{prop}\label{specialize}
Let $M^*$ be a complexification
of a real analytic manifold~$M$.
\begin{itemize}
\item[{\normalfont (a)}]
If every open neighborhood of~$M$ in~$M^*$ contains
a paracompact open neighborhood of~$M$ in~$M^*$,
then there exists a paracompact open neighborhood~$U$
of~$M$ in~$M^*$ such that~$M$ is closed in~$U$.
\item[{\normalfont (b)}]
If~$M^*$ is paracompact and~$M$ is closed in~$M^*$,
then there exists an open neighborhood~$U$ of~$M$
in~$M^*$ and an anti-holomorphic map
$\sigma\colon U\to U$ such that $\sigma\circ\sigma=\id_U$ and
$M=\{z\in U\colon \sigma(z)=z\}$.
\item[{\normalfont (c)}]
If $K\subseteq M$ is a non-empty
compact set and $M^*$ is regular as a topological
space, then there exists an open neighborhood~$U$ of~$K$
in~$M^*$ and an anti-holomorphic map
$\sigma\colon U\to U$ such that $\sigma\circ\sigma=\id_U$ and
$M\cap U=\{z\in U\colon \sigma(z)=z\}$.
In particular, $M\cap U$ is closed in~$U$.
\end{itemize}
\end{prop}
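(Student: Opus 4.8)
The plan is to handle the three parts in turn, with (b) and (c) sharing essentially the same argument. For (a), I would use that $M$ is always locally closed in a complexification, as recalled just before Definition~\ref{cxgeneral}: each $x\in M$ has an open neighborhood $P_x\sub M^*$ with $M\cap P_x$ relatively closed in $P_x$. Then $W:=\bigcup_{x\in M}P_x$ is an open neighborhood of $M$ in which $M$ is closed, since closedness is a local property and each $M\cap P_x$ is closed in $P_x$. By hypothesis, $W$ contains a paracompact open neighborhood $U$ of $M$; as $U\sub W$ is open and $M$ is closed in $W$, it follows that $M=M\cap U$ is closed in $U$, which is the assertion.

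For (b), the idea is to produce the involution by extending the identity map into the oppositely oriented complexification. By Remark~\ref{complex-op}, $(M^*)_{\op}$ is again a complexification of~$M$, and a map into $(M^*)_{\op}$ is complex analytic exactly when it is anti-holomorphic as a map into~$M^*$. Applying Lemma~\ref{extensions}\,(c) to $\id_M\colon M\to M$, viewed with target complexification $(M^*)_{\op}$, I obtain an anti-holomorphic map $\sigma\colon U_0\to M^*$ on an open neighborhood~$U_0$ of~$M$ with $\sigma|_M=\id_M$. Since a composite of two anti-holomorphic maps is complex analytic and $\sigma\circ\sigma=\id$ holds on~$M$, the Identity Theorem (Lemma~\ref{idthm}) forces $\sigma\circ\sigma=\id$ on the union~$\Omega_1$ of those connected components of $U_0\cap\sigma^{-1}(U_0)$ that meet~$M$. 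To turn~$\sigma$ into a genuine involutive self-map, I would pass to $U_1:=\Omega_1\cap\sigma^{-1}(\Omega_1)$; a short check using $\sigma(\sigma(z))=z$ shows $\sigma(U_1)\sub U_1$ and $\sigma\circ\sigma=\id_{U_1}$, while $M\sub U_1$ because $\sigma(M)=M$.

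The hard part will be pinning down the fixed-point set. Here I would work in a chart $\psi\colon U^*\to U'\sub E_\C$ around a point $x\in M$ with $\psi(M\cap U^*)=E\cap U'$, shrinking so that $\sigma(U^*)\sub U^*$. In this chart~$\sigma$ becomes an anti-holomorphic self-map $\tilde\sigma$ of~$U'$ restricting to the identity on $E\cap U'$. Comparing with the model conjugation $\tau\colon E_\C\to E_\C$, $\tau(x+iy)=x-iy$, the composite $\tau\circ\tilde\sigma$ is complex analytic and equals $\id$ on $E\cap U'$, so Lemma~\ref{idthm}\,(b) gives $\tau\circ\tilde\sigma=\id$, i.e.\ $\tilde\sigma=\tau$, on the components of~$U'$ meeting~$E$. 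Hence, after shrinking, the fixed-point set of~$\sigma$ in~$U^*$ is exactly $\mathrm{Fix}(\tau)=E\cap U'$, corresponding to $M\cap U^*$. Writing~$\Omega_2$ for the union of such chart domains over all $x\in M$, any fixed point of~$\sigma$ in~$\Omega_2$ lies in~$M$. Finally I would set $U:=U_1\cap\Omega_2\cap\sigma^{-1}(U_1\cap\Omega_2)$: the same symmetrization check yields $\sigma(U)\sub U$ and $\sigma\circ\sigma=\id_U$, while $\{z\in U\colon\sigma(z)=z\}=M\cap U=M$ since $M\sub U$ and fixed points in~$\Omega_2$ lie in~$M$. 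This proves (b).

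For (c), I would run the identical scheme but feed in Lemma~\ref{extensions}\,(a) in place of~(c), which only requires $M^*$ regular and~$K$ compact; this yields an anti-holomorphic~$\sigma$ on a neighborhood of~$K$ with $\sigma|_{U\cap M}=\id$. The symmetrization step is unchanged, and the local coincidence $\tilde\sigma=\tau$ holds near each point of~$K$. Since~$K$ is compact I can cover it by finitely many such charts and intersect, obtaining a neighborhood~$U$ of~$K$ with $M\cap U=\{z\in U\colon\sigma(z)=z\}$. The closedness of $M\cap U$ in~$U$ is then automatic, being the fixed-point set of the continuous map~$\sigma$.
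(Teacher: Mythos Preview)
Your argument is correct. For parts (b) and (c) you follow essentially the same line as the paper: extend $\id_M$ anti-holomorphically via Lemma~\ref{extensions}, identify the extension with the model conjugation~$\tau$ in charts using Lemma~\ref{idthm}\,(b), and symmetrize to obtain a self-map that is an involution with the right fixed-point set. The paper packages the symmetrization step through Lemma~\ref{extensions}\,(e) (applied to $\id_M$ with both complexifications equal to $M^*$, resp.\ $(M^*)_{\op}$), whereas you carry it out directly with the Identity Theorem and the intersection $U_1=\Omega_1\cap\sigma^{-1}(\Omega_1)$; the content is the same. Two wording quibbles: in (b), ``shrinking so that $\sigma(U^*)\subseteq U^*$'' is not quite what one achieves---one shrinks to a connected neighborhood $Q_x$ of~$x$ with $\sigma(Q_x)\subseteq U^*$ and then compares $\psi\circ\sigma\circ\psi^{-1}$ with~$\tau$ on $\psi(Q_x)$ (this is also how the paper phrases it); in (c), ``cover by finitely many such charts and intersect'' should read ``take the union'' to form your~$\Omega_2$. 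Neither affects the argument.

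Part (a) is where you genuinely diverge. The paper proves (a) by first running the whole anti-holomorphic-involution construction (using Lemma~\ref{extensions}\,(b), which is available under the hypothesis of~(a)), and only then reads off that $M$ is closed in the resulting neighborhood~$P$ as the fixed-point set of~$\sigma$, before passing to a paracompact sub-neighborhood. Your route is more direct: since $M$ is locally closed in~$M^*$, the union $W=\bigcup_{x\in M}P_x$ of the witnessing neighborhoods is an open neighborhood in which $M$ is closed, and one simply passes to a paracompact $U\subseteq W$. This avoids the extension machinery entirely for (a). What the paper's longer route buys is that it actually produces the involution~$\sigma$ already under the hypothesis of~(a) (even though this is not part of the stated conclusion), so (a), (b), (c) become three variations of a single construction; your proof of (a) is shorter but does not yield this extra information.
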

\begin{proof}
(a) Let $E$ be the modeling space of~$M$
and $\tau\colon E_\C\to E_\C$ be complex
conjugation, $\tau(x+iy):=x-iy$
for $x,y\in E$.
By Remark~\ref{complex-op}
and Lemma~\ref{extensions}\,(b), $\id_M$
admits
a complex analytic extension $g\colon V\to (M^*)_{\op}$,
defined on an open neighborhood~$V$ of~$M$ in~$M^*$.
For $x\in M$, let $\psi_x\colon U^*_x\to U'_x$
be a chart for~$M^*$ such that $\psi_x(M\cap U_x^*)=E\cap U'_x$
and $U_x^*\subseteq V$.
Then $\tau\circ \psi_x$ is a chart for~$(M^*)_{\op}$.
Let $Q_x\subseteq U_x^*$ be a connected neighborhood of~$x$
such that $g(Q_x)\subseteq U_x^*$.
Then $P_x:=\psi_x(Q_x)$ is a connected open
subset of~$E_\C$ such that $\psi_x(x)\in E\cap P_x$.
Because
$\tau\circ \psi_x\circ g\circ \psi_x^{-1}|_{P_x}$
and $\id_{P_x}$ are complex analytic maps
$P_x\to E_\C$ which coincide with $\id_{E\cap P_x}$
on~$E\cap P_x$,
the Identity Theorem shows that
$\tau\circ \psi_x\circ g\circ \psi_x^{-1}|_{P_x}=\id_{P_x}$
and thus $\psi_x\circ g\circ \psi_x^{-1}|_{P_x}=\tau|_{P_x}$.
As the latter map moves each vector in $P_x\setminus E$,
we deduce that $g(z)\not=z$ for all
$z\in Q_x\setminus M$. Hence, after replacing~$V$
with its open subset $Q:=\bigcup_{x\in M}Q_x$,
we may assume that $g(z)\not=z$ for all $z\in V\setminus M$.
By
Lemma~\ref{extensions}\,(e),
there is an open neighborhood~$W$ of~$M$ in~$V$ such that
$g(W)$ is open and $g\colon W\to g(W)\subseteq (M^*)_{\op}$
is a complex analytic diffeomorphism,
with inverse~$g|_{g(W)}$.
Henceforth, we may consider~$g$ as an anti-holomorphic
diffeomorphism $W\to g(W)\subseteq M^*$.
Now $P:=W\cap g(W)$ is an open neighborhood
of~$M$ in~$M^*$ and $\sigma:= g|_P$
is an antiholomorphic involution
$P\to P$ such that $M\subseteq \{z\in P\colon \sigma(z)=z\}$
and actually $M=\{z\in P\colon \sigma(z)=z\}$
(as we ensured by passing to~$Q$
that~$\sigma$ moves all points outside~$M$).
In particular, $M$ is closed in~$P$.
By hypothesis, there exists a paracompact open neighborhood~$U$
of~$M$ in~$P$. By the preceding, $M$~is closed in~$U$.

(b) Using Lemma~\ref{extensions}\,(c)
instead of Lemma~\ref{extensions}\,(b), we can repeat the proof of~(a)
to find an open neighbourhood~$P$ of~$M$ in~$M^*$
and an anti-holomophic involution $\sigma\colon P\to P$
with fixed point set~$M$.

(c) Using Lemma~\ref{extensions}\,(a)
instead of Lemma~\ref{extensions}\,(b),
we obtain $g\colon V\to (M^*)_{\op}$
on an open neighborhood~$V$ of~$K$ in~$M^*$.
We can now continue as in the proof
of~(a), replacing each occurence of~``$M$''
with ``$V\cap M$'' and taking
$U:=P$ at the end.
\end{proof}
\section{Existence of complexifications around a\\
compact set}\label{sec-exists}

In this section, we prove Theorem A. The proof splits into a proposition,
where the complexification is constructed, and a corollary. The latter
establishes\linebreak
regularity of the complexification. 

\begin{prop}\label{prop: THMA1}
Let $M$ be a real analytic manifold modeled on a locally convex space~$E$.
Assume that the topological space~$M$ is regular and~$E_\C$ is normal.
For each compact subset~$K$ of~$M$, there is an open neighbourhood~$\Omega$
of~$K$ in~$M$ such that the open submanifold~$\Omega$ admits a
complexification $(\Omega^*,\varphi)$.   
\end{prop}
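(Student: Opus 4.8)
The plan is to follow the Bruhat--Whitney glueing strategy: complexify finitely many real analytic charts covering~$K$, extend the transition maps to complex analytic maps on neighbourhoods in~$E_\C$, and paste the complexified charts together along these extensions. First I would cover~$K$ by chart domains $U_i$ ($i\in I$) of~$M$ and apply Lemma~\ref{toplemma}\,(a) (using that~$M$ is regular and~$K$ compact) to obtain a cover $(W_j)_{j\in J}$ of~$K$ by open subsets of~$M$ satisfying the meeting property~(\ref{meeting}); passing to a finite subcover (possible as~$K$ is compact, and harmless since~(\ref{meeting}) is inherited by subfamilies) I may assume~$J$ is finite. Applying~(\ref{meeting}) with $k=j$ shows each~$W_j$ lies in a single chart domain $U_{i(j)}$, while overlapping $W_j,W_k$ lie in a common $U_{i(j,k)}$. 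I set $\Omega:=\bigcup_{j\in J}W_j$, an open neighbourhood of~$K$ in~$M$.

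Next I would assemble the glueing data. For each~$j$ put $V_j:=\psi_{i(j)}(W_j)\subseteq E$ and choose an open $V_j^*\subseteq E_\C$ with $V_j^*\cap E=V_j$ (possible since~$E$ carries the subspace topology of~$E_\C$). Whenever $W_j\cap W_k\neq\emptyset$, the transition map $\tau_{kj}:=\psi_{i(k)}\circ\psi_{i(j)}^{-1}$ is a real analytic diffeomorphism between open subsets of~$E$ and hence, by definition of real analyticity, extends to a complex analytic map on a neighbourhood of its real domain in~$E_\C$; the extension is unique on connected neighbourhoods meeting~$E$ by the Identity Theorem (Lemma~\ref{idthm}\,(b)), and by Lemma~\ref{extensions}\,(e) it may be arranged to be a biholomorphism near the real overlap. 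On triple overlaps the cocycle identity $\tau_{ki}^*=\tau_{kj}^*\circ\tau_{ji}^*$ holds on real points, hence on a neighbourhood by the Identity Theorem, so after shrinking the $V_j^*$ and the domains of the $\tau_{kj}^*$ I obtain a genuine holomorphic cocycle.

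I would then form $\Omega^*:=\big(\bigsqcup_{j\in J}V_j^*\big)/\!\sim$, where $z\in V_j^*$ is identified with $\tau_{kj}^*(z)\in V_k^*$ wherever the latter is defined; the images of the $V_j^*$ provide $E_\C$-valued charts, so $\Omega^*$ is a complex analytic premanifold. The embedding $\varphi\colon\Omega\to\Omega^*$, $x\mapsto[\psi_{i(j)}(x)]$ for $x\in W_j$, is well defined because $\tau_{kj}^*$ restricts to $\tau_{kj}$ on real points, and one checks directly that~$\varphi$ is a real analytic diffeomorphism onto a real analytic submanifold and that each point of $\varphi(\Omega)$ admits a chart carrying $\varphi(\Omega)\cap U^*$ onto $E\cap U'$, as demanded by Definition~\ref{cxgeneral}.

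The hard part will be showing that the quotient $\Omega^*$ is Hausdorff, the remaining manifold axioms being local and hence automatic (and the stronger regularity being deferred to the subsequent corollary). This is precisely where normality of~$E_\C$ must enter. The danger is a pair of non-identified points $[z]\in V_j^*$, $[w]\in V_k^*$ arising as limits of identified nets, i.e.\ a failure of the glueing relation to have closed graph. Since~$J$ is finite, I would exploit normality of~$E_\C$ through the shrinking lemma for finite open covers of normal spaces, replacing each $V_j^*$ by a smaller $V_j'$ with $\overline{V_j'}\subseteq V_j^*$ and with the closures controlled relative to the transition domains, so that identified nets are forced to have identified limits. This closure control renders the equivalence relation closed and yields the Hausdorff property, completing the construction of the complexification $(\Omega^*,\varphi)$.
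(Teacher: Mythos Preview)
Your overall strategy is the same as the paper's: a Bruhat--Whitney glueing of finitely many complexified charts, with the Hausdorff property as the crux. The setup via Lemma~\ref{toplemma}\,(a) is a legitimate variant of the paper's direct construction of nested neighbourhoods $\overline{V}_i'\subseteq U_i'\subseteq\overline{U}_i'\subseteq T_i'$, and your treatment of the cocycle condition by the Identity Theorem and shrinking is essentially what the paper does pointwise (its condition~(d) for the sets $U_{i,x}^*$).

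The gap is in the Hausdorff step. Your plan is to shrink each $V_j^*$ so that $\overline{V_j'}\subseteq V_j^*$ via the shrinking lemma for finite covers. But the set whose closure must be controlled is not $V_j^*$ itself; it is the \emph{glueing domain} $V_{j,k}^*:=V_j^*\cap(\tau_{kj}^*)^{-1}(V_k^*)$, and you need $\overline{V_{j,k}^*}$ to lie inside the domain of $\tau_{kj}^*$. This set is not chosen directly---it is determined by $V_j^*$, $V_k^*$ and the transition map---and shrinking $V_j^*$ alone does nothing to prevent $V_{j,k}^*$ from running out to the boundary of the domain of $\tau_{kj}^*$. Concretely, if $x_\alpha\in V_{j,k}^*$ with $x_\alpha\to x\in V_j^*$ and $\tau_{kj}^*(x_\alpha)\to y\in V_k^*$, you cannot conclude $x\sim y$ unless $\tau_{kj}^*$ is still defined and continuous at~$x$; for that you need $x\in\overline{V_{j,k}^*}\subseteq\mathrm{dom}(\tau_{kj}^*)$, which your shrinking does not give.

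The paper resolves this with considerably more machinery than a shrinking lemma. It first builds intermediate sets $U_{i,j}^*$ with $\overline{U}_{i,j}^*\subseteq T_{i,j}^*$ (one application of normality), then smaller sets $W_{i,j}^*$ with $\overline{W}_{i,j}^*\subseteq U_{i,j}^*$ (a second), and then---the decisive step---uses normality again to separate the disjoint closed sets $(\overline{V}_i\times\{0\})\setminus W_{i,j}^*$ and $\psi_{j,i}((\overline{V}_j\cap\overline{U}_{j,i})\times\{0\})\setminus W_{i,j}^*$ by disjoint open sets $A_{i,j}^*$, $B_{i,j}^*$. The complex chart domains $V_i^*$ are then chosen inside $\bigcap_j(A_{i,j}^*\cup W_{i,j}^*)$, and a further pointwise refinement (conditions (a)--(d) on the sets $U_{i,x}^*$) is used to prove that in fact $V_{i,j}^*\subseteq W_{i,j}^*$, hence $\overline{V}_{i,j}^*\subseteq U_{i,j}^*$. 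Only with this inclusion in hand does the net argument you sketched go through. So your diagnosis of where normality enters is right, but the mechanism you propose is too weak; you will need the layered separation argument, or something of comparable strength, to close the gap.
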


\begin{proof}
We construct the neighborhood $\Omega$ of $K$ as a union of finitely many
chart domains:
For $x \in K$, choose a manifold chart
$\varphi_x \colon M \supseteq T_x' \rightarrow T_x$ of $M$ with
$x \in T_x'$. Since $E$ is regular as a topological vector space and $M$ is regular by assumption, 
\cite[Proposition 1.5.5]{Engelking1989} allows us to choose an open
neighborhood~$W_x'$ of~$x$
with closure $\overline{W}_x' \subseteq T_x'$
in~$M$.
Then $\varphi_x (\overline{W}_x')$ is a closed (with respect to the subspace
topology) neighborhood of $\varphi_x (x)$ in $T_x$. Moreover, we can choose an
open neighborhood $W_x$ of $\varphi_x (x)$ with
closure $\overline{W}_x \subseteq T_x$ in~$E$. Define the open sets 
  \begin{displaymath}
   U_x \coloneq \varphi_x (W_x') \cap W_x \text{ and } U_x' \coloneq
  \varphi_x^{-1} (U_x) = W_x' \cap \varphi_x^{-1} (W_x)
  \end{displaymath}
and observe that the set $\overline{U}_x \subseteq \overline{W}_x$ is
closed in~$E$ and $\overline{U}_x' \subseteq \overline{W}_x'$ is closed in~$M$.
Analogously, we construct open neighborhoods $V_x'$ and $V_x$ with
$\varphi_x (V_x') = V_x$, such that $\overline{V}_x' \subseteq U_x'$
and $\overline{V}_x \subseteq U_x$. The open sets
$(V_x')_{x \in K}$
cover~$K$. Since~$K$ is compact, we obtain finite families of open subsets
$(V_i')_{i \in I}, (U_i')_{i \in I}$ and $(T_i')_{i \in I}$ of~$M$ with the
following properties for each $i \in I$:\\
 There is a map $\varphi_i \colon T_i' \rightarrow T_i \subseteq E$ such that
$(T_i',\varphi_i)$ is a manifold chart for~$M$. The sets are ordered via
$\overline{V}_i' \subseteq U_i' \subseteq \overline{U}_i' \subseteq T_i'$
and the sets $\overline{V}_i',\overline{U}_i'$ are closed in~$M$. The compact
set~$K$ is contained in $\Omega \coloneq \bigcup_{i \in I} V_i' \subseteq M$.
Furthermore, the sets $\varphi_i (\overline{V}_i')$ and
$\varphi_i (\overline{U}_i')$ are closed in~$E$. Notice that~$\Omega$ is an
open real analytic submanifold of~$M$, such that the charts
$(V_i',\varphi_i|_{V_i'})_{i \in I}$ form an atlas for $\Omega$.\bigskip

 We now construct a complexification for the open submanifold $\Omega$. To
shorten the notation, we define the following sets for $i,j \in I$: 
 \begin{align*}
  U_i \coloneq \varphi_i (U_i'), \quad \quad V_i \coloneq \varphi_i (V_i'),
  \quad \quad T_i \coloneq \varphi_i (T_i') \subseteq E, \\
  U_{i,j} \coloneq \varphi_i (U_i'\cap U_j'), \quad V_{i,j}
  \coloneq \varphi_i (V_i'\cap V_j'), \quad  T_{i,j} \coloneq \varphi_i (T_i' \cap T_j').
 \end{align*}
The change of charts $\varphi_j \circ \varphi_i^{-1}|_{T_{i,j}}
\colon T_{i,j} \rightarrow T_{j,i}$ is a real analytic diffeomorphism. Hence,
by Lemma~\ref{extensions}\,(e),
there are open neighborhoods $T_{i,j}^*$ of $T_{i,j}$ and $T_{j,i}^*$ of
$T_{j,i}$ in~$E_\C$ together with a complex analytic diffeomorphism
$\psi_{i,j} \colon T_{i,j}^* \rightarrow T_{j.i}^*$ which extends 
$\varphi_j \circ \varphi_i^{-1}|_{T_{i,j}}$.
Adjusting choices, we can achieve that $T_{i,j}^* \cap E = T_{i,j}$ and
$\psi_{i,i} = \id_{T_{i,i}}$ for all $i,j \in I$. Without loss of generality,
$T_{i,j}^*$ is the empty set if $T_{i,j}$ is empty and 
    \begin{equation}\label{eq: iso:inv}
     \psi_{j,i} = \psi_{i,j}^{-1} \quad \text{ for each pair } (i,j) \in I \times I.
    \end{equation}
 For $(i,j) \in I \times I$, the open set $U_{i,j}$ satisfies
$\overline{U}_{i,j} \subseteq \overline{U}_i \cap \overline{U}_j
\subseteq T_{i,j}$. Here the closure in~$T_{i,j}$ coincides with the closure
in~$E_\C$. Hence $\overline{U}_{i,j} \times \set{0}$ is a closed subset
of~$T_{i,j}^*$. By normality of~$E_\C$, we can choose an open
neighborhood~$O_{i,j}$ of $\overline{U}_{i,j} \times \set{0}$ whose closure
with respect to~$E_\C$ satisfies $\overline{O}_{i,j} \subseteq T_{i,j}^*$.
The open subset $U_{i,j}^* \coloneq O_{i,j} \cap (U_{i,j} \times E)$
of~$T_{i,j}^*$ satisfies 
    \begin{equation}\label{eq: U:E}
    \overline{U}_{i,j}^* \subseteq T_{i,j}^* , \quad  U_{i,j}^* \cap E = U_{i,j}
    \quad \text{ and } \quad \overline{U}_{i,j}^* \cap E = \overline{U}_{i,j}.
    \end{equation}
 Shrinking the sets $U_{i,j}^*$, we can achieve that $\psi_{i,j} (U_{i,j}^*)
= U_{j,i}^*$, since by construction we have
$\psi_{i,j} (U_{i,j} \times \set{0})
= \varphi_j \varphi_i^{-1} (U_{i,j}) \times \set{0}
= U_{j,i} \times \set{0}$.\\
 The set $\overline{V}_i \cap \varphi_i \varphi_j^{-1} (\overline{V}_j
\cap \overline{U}_{j,i})$ is closed in~$U_{i,j}$. As above, we can choose an
open neighborhood~$W_{i,j}^*$ of
$(\overline{V}_i \cap \varphi_i \varphi_j^{-1} (\overline{V}_j \cap
\overline{U}_{j,i})) \times \set{0}) $ whose closure (with respect to~$E_\C$)
is contained in~$U_{i,j}^*$. Adjusting choices, we obtain
$\psi_{i,j}(W_{i,j}^*) = W_{j,i}^*$ for each pair $(i,j) \in I\times I$.\\
 Observe that
$\psi_{j,i} ((\overline{V}_j \cap \overline{U}_{j,i})\times \set{0})$ is
closed in~$E_\C$, since it is relatively closed in~$T_{i,j}$ and contained
in the $E_\C$-closed set
$\overline{U}_{i,j} \times \set{0} \subseteq T_{i,j}$. Thus
$(\overline{V}_i \times \set{0}) \setminus W_{i,j}^*$ and
$\psi_{j,i} ((\overline{V}_j \cap
\overline{U}_{j,i})\times \set{0})\setminus W_{i,j}^*$ are disjoint closed
subsets of $E_\C$. As~$E_\C$ is a normal space, there are disjoint open
subsets~$A_{i,j}^*$ and~$B_{i,j}^*$ of~$E_\C$ such that 
  \begin{equation}\label{eq: AB}
   \overline{V}_i \subseteq A_{i,j}^* \cup W_{i,j}^* \quad \mbox{and}\quad
   \quad \psi_{j,i} (\overline{V}_j \cap \overline{U}_{j,i}) \subseteq W_{i,j}^* \cup B_{i,j}^*.
  \end{equation}
 An argument analogous to the one which produced \eqref{eq: U:E} yields an open subset $A_i^*$ of $E_\C$ with the following properties: 
  \begin{align} 
      \label{eq: A1} A_i^* \cap E = V_i , \quad \quad \overline{A}_i^* \cap E = \overline{V}_i, \;\;\mbox{ and} \\
      \label{eq: A2} A_i^* \subseteq \bigcap_{j \in I,\, T_{i,j}^* \neq \emptyset}A_{i,j}^* \cup W_{i,j}^*.  
      \end{align}
 By construction, $\overline{A}_i^* \cap \overline{U}_{i,j}^*$ is a closed
subset of $\overline{U}_{i,j}^* \subseteq T_{i,j}^*$, whence the
diffeomorphism $\psi_{i,j}$ satisfies
$\overline{\psi_{i,j} (A_i^* \cap U_{i,j}^*)} =
\psi_{i,j} (\overline{A}_i^* \cap \overline{U}_{i,j}^*)$.
Combine \eqref{eq: U:E} and \eqref{eq: A1} to obtain
 \begin{equation}\label{eq: inc}
     \overline{\psi_{i,j} (A_i^* \cap U_{i,j}^*)} \cap E
  = \psi_{i,j} (\overline{A}_i^* \cap \overline{U}_{i,j}^* \cap E) \subseteq
 \psi_{i,j} ((\overline{V}_i \cap \overline{U}_{i,j}) \times \set{0}).
 \end{equation}
 Since $I$ is a finite set, we may choose for each $i \in I$ and $x\in U_i$ an
open neighborhood~$U_{i,x}^*$ in~$E_\C$ such that the following properties are satisfied: 
 \begin{enumerate}
   \item[(a)] For each $j \in I$ with $x \in U_{i,j}$, we have
   $U_{i,x}^* \subseteq U_{i,j}^*$.
   \item[(b)] If $x \in \psi_{j,i} ((\overline{V}_j \cap
   \overline{U}_{j,i}) \times \set{0})$, then
   $U_{i,x}^* \subseteq W_{i,j}^* \cup B_{i,j}^*$ (cf.\ \eqref{eq: AB}).
   \item[(c)] For each $j \in I$ with
   $\varphi_i^{-1} (x) \not \in \overline{V}_j'$, the intersection
   $U_{i,x}^* \cap \psi_{j,i} (A_i^* \cap U_{j,i}^*)$ is empty.
   \item[(d)] For each $(j,k) \in I \times I$ with $x \in U_{i,j} \cap U_{i,k}$,
   i.e.\ $\varphi_i^{-1} (x) \in U_i' \cap U_j' \cap U_k'$, we have
	 \begin{displaymath}
	  U_{i,x}^* \subseteq \psi_{j,i} (U_{j,i}^* \cap U_{j,k}^*) \cap
          \psi_{k,i} (U_{k,i}^* \cap U_{k,j}^*),
	 \end{displaymath}
     such that the cocycle condition
$\psi_{i,j}|_{U_{i,x}^*} = \psi_{k,j} \circ \psi_{i,k}|_{U_{i,x}^*}$
holds.\footnote{This is possible as the identity already holds on
$(U_{i,j} \cap  U_{i,k}) \times \set{0}$.}
  \end{enumerate}
 It is possible to choose neighborhoods with property (c) because of the
following observations: If $U_{j,i}^* = \emptyset$, then the property is
trivially satisfied. For $j \in I$ with $U_{j,i}^* \neq \emptyset$,
the condition $\varphi_i^{-1} (x) \not \in \overline{V}_j'$ implies
$x \not \in \overline{\psi_{j,i} (A_i^* \cap U_{j,i}^*)}$ by \eqref{eq: inc}.\\
 Define the set $U_i^* \coloneq \bigcup_{x \in U_i} U_{i,x}^*$ and observe
that $\overline{V}_i \times \set{0}$ is a closed set contained in~$U_i^*$.
By \eqref{eq: A1}, there is an open neighborhood $V_i^*$ of $V_i$ which is
contained in $A_i^* \cap U_i^*$. As~$E_\C$ is normal, we can choose~$V_i^*$
such that its closure with respect to~$E_\C$ is contained in~$U_i^*$.
The identities \eqref{eq: A1} then yield $V_i^* \cap E = V_i$ and
$\overline{V}_i^* \cap E = \overline{V}_i$. Define the following sets for
$i,j,k \in I$: 
  \begin{equation}\label{eq: newset}
   V_{i,j}^* \coloneq V_i^* \cap \psi_{j,i} (V_j^* \cap U_{j,i}^*)\quad
   \text{ and }\quad V_{i,j,k}^* \coloneq V_{i,j}^* \cap V_{i,k}^*.
  \end{equation}
 By construction, $V_{i,j}^* \subseteq U_{i,j}^*$ is satisfied, since
$\psi_{j,i}$ maps $U_{j,i}^*$ to $U_{i,j}^*$. In particular, by
\eqref{eq: newset} $\psi_{i,j}$ restricts to a map
$\psi_{i,j}|_{V_{i,j}^*}^{V_{j,i}^*}$, which is a complex analytic
diffeomorphism. Finally, we remark that $V_{i,i}^* = V_i^*$.\\
 Each point $y\in V_{i,j,k}^*$ is contained in an open set~$U_{i,x}^*$ for
some $x \in U_i$. From \eqref{eq: newset}, we infer
$y \in U_{i,x}^* \cap \psi_{j,i} (V_j^* \cap U_{j,i}^*)
\cap \psi_{k,i} (V_k^* \cap U_{k,i}^*)$. Thus, by choice of~$V_j^*$
and~$V_k^*$, the intersection
$U_{i,x}^* \cap \psi_{j,i} (A_j^* \cap U_{j,i}^*) \cap
\psi_{k,i} (A_k^* \cap U_{k,i}^*)$ is not empty.
We conclude from
property~(c) for~$U_{i,x}^*$ the following condition:
  \begin{displaymath}
   x \in \varphi_i (\overline{V}_j' \cap V_i') \cap
   \varphi_i (\overline{V}_k' \cap V_i') \subseteq
   \varphi_i (U_i' \cap U_j') \cap \varphi_i (U_i' \cap U_k')
   = U_{i,j} \cap U_{i,k}. 
  \end{displaymath}
 Hence property (d) for $U_{i,x}^*$ yields $\psi_{k,j} \circ \psi_{i,k} (y)
= \psi_{i,j} (y)$. The point $z \coloneq \psi_{i,j} (y)$ is contained in
$V_{j,i}^*$ since $\psi_{i,j}$ maps $V_{i,j}^*$ into this set. In particular,
$z \in V_{j}^*$ holds. Furthermore, by definition of~$V_{i,j,k}^*$, we obtain
$\psi_{i,k} (y) \in V_k^* \cap  U_{k,i}^*$ (cf.\ \eqref{eq: newset}). Hence
we deduce $z = \psi_{k,j} \circ \psi_{i,k} (y) \subseteq
\psi_{k,j} (U_{k,i}^* \cap V_k^*)$.
Summing up, we derive $z = \psi_{i,j} (y) \in V_{j,i,k}^*$.\\
 As $y \in V_{i,j,k}^*$ was arbitrary, $\psi_{i,j}$ maps $V_{i,j,k}^*$ into
$V_{j,i,k}^*$. An analogous argument yields
$\psi_{j,i} (V_{j,i,k}^*) \subseteq V_{i,j,k}^*$. From \eqref{eq: iso:inv},
we deduce that $\psi_{i,j}|_{V_{i,j,k}^*}^{V_{j,i,k}^*}$ is an analytic
diffeomorphism whose inverse is $\psi_{j,i}|^{V_{i,j,k}^*}_{V_{j,i,k}^*}$;
moreover, the identity
$\psi_{i,j}|_{V_{i,j,k}^*} = \psi_{k,j} \circ \psi_{i,k}|_{V_{i,j,k}^*}$
holds.

Consider the disjoint union (topological sum)
$\tilde{\Omega} \coloneq \bigsqcup_{i \in I} V_i^*$ and recall
$V_i^* = V_{i,i}^*$. We declare a relation ``$\sim$'' on~$\tilde{\Omega}$.
For two points $x \in V_i^*$ and $y \in V_j^*$, define: 
  \begin{displaymath} 
    x\sim y\ \Longleftrightarrow\ x \in V_{i,j}^*,\ y \in V_{j,i}^*
    \text{ and } y = \psi_{i,j} (x).
  \end{displaymath}
 From the construction above, it is clear that ``$\sim$'' is an equivalence
relation on~$\tilde{\Omega}$. Define~$\Omega^*$ to be the quotient space
$\tilde{\Omega}/{\sim}$.
Denote the associated quotient map by
$q \colon \tilde{\Omega} \rightarrow \Omega^*$. Then the composition
$\kappa_i \coloneq q \circ (V_i^* \hookrightarrow \tilde{\Omega})$ (with the
natural embedding) is injective. The pairs $(V_i^* , \kappa_i)_{i \in I}$ form
a family of complex charts for~$\Omega^*$. Indeed this family is an atlas
for~$\Omega^*$ whose change of chart maps for $(i,j) \in I\times I$ are the
complex analytic maps~$\psi_{i,j}$. Hence~$\Omega^*$ with this atlas becomes
a (possibly non-Hausdorff) complex analytic  manifold. 
\paragraph{Claim:} $\Omega^*$ is a Hausdorff topological space.\\
If the claim is true then $\Omega^*$ is a Hausdorff complex analytic
manifold. Furthermore, the map
$\bigsqcup_{i \in I} \varphi_i|_{V_i'} \colon
\bigsqcup_{i \in I} V_i' \rightarrow \tilde{\Omega}$ is a real analytic
embedding whose image is contained in $\tilde{\Omega} \cap E$ (identifying~$E$
with the real subspace $E \times \set{0}$). The change of chart maps
for~$\Omega^*$ are complex analytic extensions~$\psi_{i,j}$ of the changes of
charts $\varphi_j \circ \varphi_i^{-1}$. Hence
$\bigsqcup_{i \in I} \varphi_i|_{V_i'}$ factors
to a real analytic embedding
$\varphi \colon \Omega \rightarrow \Omega^*$ whose image is a real analytic
submanifold of~$\Omega^*$. By construction, the pair $(\Omega^* , \varphi)$
is a complexification of~$\Omega$. Hence the proof will be complete,
if we can verify that~$\Omega^*$ is a Hausdorff space.\bigskip 

 \textbf{Proof of the claim: } As a first step, we prove
$\overline{V}_{i,j}^* \subseteq U_{i,j}^*$ for $(i,j) \in I \times I$. We
already know $\overline{W}_{i,j}^* \subseteq U_{i,j}^*$, whence it suffices
to prove $V_{i,j}^* \subseteq W_{i,j}$ for $(i,j) \in I \times I$. Moreover,
it suffices to consider the case $T_{i,j} \neq \emptyset$, since
otherwise~$V_{i,j}^*$ is empty. Let $y \in V_{i,j}^*$. By construction,
$V_{i,j}^* \subseteq V_i^* \subseteq U_i^*$ holds and there is some
$x \in U_i$ with $y \in U_{i,x}^*$. If~$x$ is not contained in
$\psi_{j,i} ((\overline{V}_j \cap \overline{U}_{j,i}) \times \set{0})$,
we deduce that $\varphi_i^{-1} (x)$ is not contained in $\overline{V}_j'$.
Hence property~(c) for~$U_{i,x}^*$ implies
$y \not \in \psi_{j,i} (A_i^*\cap U_{j,i}^*)$. As $V_i^* \subseteq A_{i}^*$
holds, we obtain $y \not \in \psi_{j,i} (V_i^* \cap U_{j,i}^*)$. However,
this contradicts $y \in V_{i,j}^* \subseteq \psi_{j,i} (V_{j,i}^*)$
(cf.\ \eqref{eq: newset}). 
Therefore we must have
$x \in \psi_{j,i} ((\overline{V}_j \cap \overline{U}_{j,i}) \times \set{0})$.\\
 Then property (b) for $U_{i,x}^*$ implies $y \in W_{i,j}^* \cup B_{i,j}^*$.
On the other hand,
$y \in V_i^* \subseteq A_i^* \subseteq A_{i,j}^* \cup W_{i,j}^*$
by \eqref{eq: A2} and the definition of~$V_i^*$.
Since $A_{i,j}^*$ and $B_{i,j}^*$ are disjoint, we obtain $y \in W_{i,j}^*$.
Thus $V_{i,j}^* \subseteq W_{i,j}^*$ is satisfied.\\
 To prove the Hausdorff property, let $x', y' \in \Omega^*$ be two distinct
points. We choose $i,j \in I$ and points $x \in V_i^*$ and $y \in V_j^*$
with $q( x) =x'$ and $q(y) = y'$.\\
 We have to construct neighborhoods of~$x$ in~$V_{i}^*$ and of~$y$ in~$V_j^*$
which contain no equivalent points.
To reach a contradiction, suppose to the contrary
that this was not possible. We could then find nets
$(x_\alpha)_{\alpha \in A}$ and $(y_\alpha)_{\alpha \in A}$ indexed by a
set~$A$ with $x_\alpha \rightarrow x$ and $y_\alpha \rightarrow y$ such that,
for each $\alpha \in A$, we have $x_\alpha \in V_{i,j}^*$,
$y_{\alpha} \in V_{j,i}^*$ and $x_\alpha = \psi_{j,i} (y_\alpha )$.
Observe that this implies $x \in \overline{V}_{i,j}^* \subseteq U_{i,j}^*$
and $y \in \overline{V}_{j,i}^* \subseteq U_{j,i}^*$. The mapping
$\psi_{j,i}|_{U_{j,i}^*}^{U_{i,j}^*}$ is continuous, whence
$x = \psi_{j,i} (y)$ follows. We deduce that $y \in V_j^* \cap U_{j,i}^*$
and thus $x \in V_i^* \cap \psi_{j,i} (V_j^* \cap U_{j,i}^*) = V_{i,j}^*$.
Analogously, one derives $y \in V_{j,i}^*$.
Now $x' = y'$ follows from the definition of~``$\sim$''.
This contradicts our choices of~$x'$ and~$y'$, 
whence $\Omega^*$ must be a Hausdorff space.    
\end{proof}

\begin{cor}\label{cor: regular}
 The manifold $\Omega^*$ constructed in Proposition {\rm\ref{prop: THMA1}} is a
regular topological space. 
\end{cor}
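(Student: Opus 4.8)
The plan is to exploit the finiteness of the atlas $(\kappa_i, V_i^*)_{i\in I}$ together with the regularity of the model space $E_\C$ and the closure estimate $\overline{V}_{i,j}^* \subseteq U_{i,j}^*$ (closures with respect to $E_\C$) established in the proof of the Claim in Proposition~\ref{prop: THMA1}. Recall that a topological space is regular if and only if every point admits a neighbourhood basis of closed sets; since $\Omega^*$ is already known to be Hausdorff, it therefore suffices to show that for each $p \in \Omega^*$ and each open neighbourhood $\mathcal{O}$ of $p$ there is an open neighbourhood $W$ of $p$ with $\overline{W} \subseteq \mathcal{O}$. First I would fix $i \in I$ and $x \in V_i^*$ with $p = \kappa_i(x)$. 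Since $\kappa_i$ is a homeomorphism onto the open set $\kappa_i(V_i^*)$, the set $\kappa_i^{-1}(\mathcal{O})$ is an open neighbourhood of $x$ in $V_i^* \subseteq E_\C$; as $E_\C$ is regular, I can choose an open neighbourhood $S$ of $x$ with $E_\C$-closure $\overline{S} \subseteq \kappa_i^{-1}(\mathcal{O})$ (in particular $\overline{S} \subseteq V_i^*$). I then set $W := \kappa_i(S)$.

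The technical core is an explicit description of the closure of $W$ read off in each chart. Because $\kappa_j$ restricts to a homeomorphism of $V_j^*$ onto an open subset of $\Omega^*$, a point $\kappa_j(y)$ lies in $\overline{W}$ if and only if every neighbourhood of $y$ in $V_j^*$ meets $\psi_{i,j}(S \cap V_{i,j}^*)$; that is,
\begin{equation*}
\kappa_j^{-1}(\overline{W}) = \overline{\psi_{i,j}(S \cap V_{i,j}^*)}^{\,V_j^*}.
\end{equation*}
Here the estimate $\overline{V}_{i,j}^* \subseteq U_{i,j}^*$ enters decisively: it yields $\overline{S \cap V_{i,j}^*} \subseteq \overline{S} \cap U_{i,j}^*$, so the homeomorphism $\psi_{i,j} \colon U_{i,j}^* \to U_{j,i}^*$ is defined and continuous on this $E_\C$-closure, and one may pull the closure through it to obtain
\begin{equation*}
\kappa_j^{-1}(\overline{W}) = \psi_{i,j}\big(\overline{S \cap V_{i,j}^*}\big) \cap V_j^* \subseteq \psi_{i,j}\big(\overline{S} \cap U_{i,j}^*\big) \cap V_j^*.
\end{equation*}

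To conclude, I would take any $y \in \kappa_j^{-1}(\overline{W})$ and write $y = \psi_{i,j}(x')$ with $x' \in \overline{S} \cap U_{i,j}^*$; since $\overline{S} \subseteq V_i^*$, we have $x' \in V_i^* \cap U_{i,j}^*$, hence $y \in V_j^* \cap \psi_{i,j}(V_i^* \cap U_{i,j}^*) = V_{j,i}^*$, whence $x' = \psi_{j,i}(y) \in V_{i,j}^*$. Thus $x' \in V_{i,j}^*$, $y \in V_{j,i}^*$ and $y = \psi_{i,j}(x')$, so $x' \sim y$ and $\kappa_j(y) = \kappa_i(x')$; as $x' \in \overline{S} \subseteq \kappa_i^{-1}(\mathcal{O})$, this point lies in $\mathcal{O}$. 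Running over all $j \in I$ (the case $j = i$ being immediate, as $V_{i,i}^* = V_i^*$ and $\psi_{i,i} = \id$) shows $\overline{W} = \bigcup_{j \in I}\big(\overline{W} \cap \kappa_j(V_j^*)\big) \subseteq \mathcal{O}$, as desired. The main obstacle I anticipate is precisely the closure computation in the second step: one must verify that boundary points of $S \cap V_{i,j}^*$ remain inside the domain $U_{i,j}^*$ of the extended transition map (which is exactly what $\overline{V}_{i,j}^* \subseteq U_{i,j}^*$ guarantees) and that limit points appearing in $V_j^*$ genuinely arise from $\sim$-equivalent points of $V_{i,j}^*$, so that no spurious boundary identifications enlarge $\overline{W}$ beyond $\mathcal{O}$.
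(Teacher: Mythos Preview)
Your argument is correct and genuinely different from the paper's. The paper proves regularity in the ``point versus closed set'' formulation: given $x\in\Omega^*$ and a closed $C\not\ni x$, it performs a three-case analysis over the finite index set~$I$ (according to whether $x$ lies in $q(V_j^*)$, on its boundary, or outside its closure), constructing in each case disjoint open pieces $X_j,Y_j\subseteq V_j^*$ and then gluing. You instead use the equivalent ``closed neighbourhood basis'' formulation and exploit the single closure estimate $\overline{V}_{i,j}^*\subseteq U_{i,j}^*$ (established in the Hausdorff argument) to show directly that for a small enough chart-neighbourhood $S$ with $\overline{S}\subseteq V_i^*$, the $\Omega^*$-closure of $\kappa_i(S)$ is already $\kappa_i(\overline{S})$. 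The key point --- that any limit point $y=\psi_{i,j}(x')$ with $x'\in\overline{S}\cap U_{i,j}^*$ automatically satisfies $y\in V_{j,i}^*$ and $x'\in V_{i,j}^*$, hence $\kappa_j(y)=\kappa_i(x')$ --- is exactly right and makes the proof considerably shorter than the paper's; the price is that the argument is less constructive (it does not exhibit an explicit open neighbourhood of the closed set~$C$), but for establishing regularity this is irrelevant.
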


\begin{proof}
 Let $x \in \Omega^*$ and consider a closed set $C \subseteq \Omega^*$ with
$x \not \in C$. We construct open disjoint neighborhoods of~$x$ and~$C$,
respectively. Let $q \colon \tilde{\Omega} \rightarrow \Omega^*$ be the
quotient map and define $C_i \coloneq q^{-1} (C) \cap V_i^*$. Then~$C_i$ is
relatively closed in~$V_i^*$. Furthermore, there is a non-empty subset
$J \subseteq I$ such that $q^{-1} (x) \cap V_i^*$ is non-empty if and only if
$i \in J$. Since $q|_{V_i^*}$ is injective, there is a unique
$x_i \in q^{-1} (x)$ for each $i \in J$. Observe that this implies for
$i \in J,\ j \in I$ that $x_i$ is contained in $V_{i,j}^*$ if and only if
$j \in J$. \\
 The space $E_\C$ is normal, hence a regular topological space. The subsets
$V_i^* \subseteq E_\C$ are thus regular spaces by
\cite[Theorem 2.1.6]{Engelking1989}. We construct a family of pairwise
disjoint open sets $X_i, Y_i \subseteq V_i^*$ for each $i \in I$ as follows: \bigskip

  \textbf{Step 1:} Let $i \in J$. The space $V_i^*$ is regular, whence there
are disjoint open subsets $X_{i}, Y_{i} \subseteq V_i^*$ such that
$x_i \in X_i$ and $C_i \subseteq Y_i$. \bigskip

  \textbf{Step 2:} Consider $j \in I$ such that $x$ is not contained in
$\overline{q(V_j^*)}$. In other words, $x_i$ is not contained in
$\overline{V}_{i,j}^*$ (here closure is taken with respect to $V_i^*$)
for each $i \in J$ and in particular $j \in I \setminus J$. For each~$j$ with
this property, define $X_j \coloneq \emptyset$ and $Y_j \coloneq V_j^*$. We
recall $C_j \subseteq Y_j$. As $I$ is a finite set, we can shrink each
$X_i, i \in J$ such that $X_i \cap \overline{V}_{i,j}^* = \emptyset$ holds
for each $j \in I$ with $x \not \in \overline{q(V_j^*)}$. \bigskip

  \textbf{Step 3:} Consider $j \in I$ with $x \in \partial q(V_j^*)
= \overline{q(V_j^*)} \setminus q(V_j^*)$, i.e.\ $q^{-1} (x) \cap V_j^*
= \emptyset$ is satisfied. Since $x \in \partial q(V_j^*)$, there is
$i \in J$ with $x_i \in \overline{V}_{i,j}^*$ (the sets $V_{i,j}^*$ are the
domains of the change of chart mappings
$\psi_{i,j}|_{V_{i,j}^*}^{V_{j,i}^*}$).\\ 
  Consider the closure $\overline{C}_j$ of $C_j \subseteq V_j^*$ with respect
to~$E_\C$. We prove that $\psi_{i,j} (x_i)$ is not contained in
$\overline{C}_j$. To reach a contradiction,
suppose this was wrong.
We could then find a net $(y_\sigma)_{\sigma \in \Sigma} \subseteq C_j$
with $y_\sigma \rightarrow \psi_{i,j} (x_i)$. Recall from the proof of
Proposition \eqref{prop: THMA1} that
$\overline{V}_{i,j}^* \subseteq U_{i,j}^*$ is satisfied and the mapping
$\psi_{i,j}|_{U_{i,j}^*}^{U_{j,i}^*}$ is a real analytic diffeomorphism.
Hence $\psi_{i,j} (X_i \cap U_{i,j}^*)$ is an open neighborhood of
$\psi_{i,j} (x_i)$ in $U_{j,i}^*$ which contains $\psi_{i,j} (x_i)$.
Thus the following holds: 
  \begin{displaymath}
   (y_\sigma)_{\sigma \in \Sigma} \subseteq C_j \cap
\psi_{i,j} (X_i \cap U_{i,j}^*) \subseteq V_j^* \cap
\psi_{i,j} (X_i \cap U_{i,j}^*) \subseteq V_j^* \cap
\psi_{i,j} (V_i^* \cap U_{i,j}^*) = V_{j,i}^*.
  \end{displaymath}
  Now $q (\psi_{j,i} (y_\beta)) = q(y_\beta) \in C$ yields
$\psi_{j,i} (y_\beta ) \in C_i$. Furthermore, we observe
$\psi_{j,i} (y_\beta ) \in \psi_{j,i} \psi_{i,j} (X_i \cap U_{i,j}^*)
\subseteq X_i$. But this contradicts the choice of $X_i$ for $i \in J$,
as $C_i \cap X_i = \emptyset$ (cf.\ Step 1).
  In conclusion, $\psi_{i,j} (x_i)$ is not contained in~$\overline{C}_j$.
By regularity of~$E_\C$, there are open disjoint neighborhoods~$X_j'$
of~$\psi_{i,j} (x_i)$ and~$Y_j'$ of~$\overline{C}_j$. Define
$X_j \coloneq V_j^* \cap X_j'$ and $Y_j \coloneq V_j^* \cap Y_j'$.\bigskip

  Shrinking the open sets obtained for $i \in J$, we may achieve the
  following: 
  \begin{enumerate}
   \item[(a)] $\psi_{i,j} (X_i) = X_j$ for all $i,j \in J$.
   \item[(b)] For each $j \in I \setminus J$, i.e.\ $x_i \not \in V_{i,j}^*$, the
   set $X_i \cap U_{i,j}$ satisfies $\psi_{i,j} (X_i \cap U_{i,j})
   \subseteq X_j$. Observe that by Step~2 this condition is trivially
   satisfied if $x \not \in \overline{q(V_{i,j}^*)}$.
  \end{enumerate}
  Define neighborhoods $X \coloneq \bigcup_{i \in J} q (X_i)$ and
$Y \coloneq \bigcup_{i \in I} q(Y_i)$. These sets are open in~$\Omega^*$
and~$X$ is a neighborhood of~$x$. Since $q^{-1} (C)
= \bigcup_{i \in I} C_i \subseteq \bigcup_{i \in I} Y_i$, the open set~$Y$
contains~$C$.\\ We claim that~$X$ and~$Y$ are disjoint. If this is true,
then~$\Omega^*$ is a regular topological space and the proof is complete. To prove
disjointness, consider $z \in X \cap Y$. Choose $i \in I$ such that
$z \in q(V_i^*)$. As $q|_{V_i^*}$ is injective, there is a unique
preimage $\tilde{z} \in q^{-1} (z) \cap V_i^*$. Since $z\in X\cap Y$,
we obtain: 
  \begin{displaymath}
   \tilde{z} \in q^{-1} (X) \cap q^{-1} (Y) \cap V_i^* \subseteq X_i \cap
   \bigcup_{i \in I}\psi_{j,i} (Y_j \cap V_{i,j}^*).
  \end{displaymath} 
  The inclusion above holds for $i \in I \setminus J$ by property~(b)
for the~$X_i$. By construction, the sets $X_i$ and
$Y_i = \psi_{i,i} (Y_i \cap V_{i,i}^*)$ are disjoint, whence there must be
$j \neq i$ with $\tilde{z} \in \psi_{j,i} (Y_j \cap V_{i,j}^*)$. For
such~$i$, we obtain $\psi_{i,j}(\tilde{z}) \in Y_{j}$. However,
$\psi_{i,j} (\tilde{z})$ is the uniquely determined preimage of~$z$ with
respect to $q|_{V_j^*}$. Hence $\psi_{i,j} (\tilde{z})$ is an element
of~$X_j$ as $z \in X$ holds. Again, this contradicts
$Y_j \cap X_j = \emptyset$. We conclude that there cannot be such an
index~$j$. Summing up, the set $X\cap Y$ must be empty. 
\end{proof}

Sticking to the general theme of constructing complexifications for
infinite-dimensional manifolds we now turn to complexifications
of locally convex vector bundles.
For finite-dimensional vector bundles these results seem to be
part of the folklore.
The notation of vector bundles we use follows \cite[Section 3]{hg2013b}.

\begin{defn}
 Let $(E,\pi,M)$ be a real analytic locally convex vector bundle. 
   We say that  $(E,\pi,M)$ \emph{admits a bundle complexification}, 
   if there exists a complex analytic bundle $(E^*, p, M^*)$ such that
   $M^*$ is a complexification of $M$, $E^*$ is a complexification of $E$
   and the bundle projection $p$ restricts to the bundle projection $\pi$ on $E \subseteq E^*$. 
\end{defn}

\begin{lem}\label{lem: Bcompl}
   Let $N$ be a real analytic manifold with a complexification $N_\C$. 
    \begin{enumerate}
     \item[{\normalfont (a)}] The bundle $(TN,\pi_{TN},N)$ admits a bundle complexification,
     it is given by the bundle $(T(N_\C), \pi_{T(N_\C)},N_\C)$.
     \item[{\normalfont (b)}] If $N_\C$ is paracompact and $N$ is modeled on a metrisable space,
   then $TN \subseteq T(N_\C)$ is a paracompact complexification which is unique in the sense of Theorem B (a). 
   Note that in this case $N$ is a paracompact manifold.
    \end{enumerate}
\end{lem}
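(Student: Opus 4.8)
The plan is to prove Lemma~\ref{lem: Bcompl} in two stages, first constructing the bundle complexification of the tangent bundle explicitly from charts, and then verifying paracompactness and uniqueness under the hypotheses of~(b).

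For part~(a), I would start from an atlas of $N$ consisting of charts $\phi_\alpha\colon U_\alpha\to V_\alpha\subseteq E$, with transition maps $\phi_{\beta}\circ\phi_\alpha^{-1}$ real analytic. The tangent bundle $TN$ is glued from the trivial pieces $V_\alpha\times E$ via the cocycle $(y,v)\mapsto (\phi_\beta\phi_\alpha^{-1}(y), d(\phi_\beta\phi_\alpha^{-1})(y;v))$. Since $N_\C$ is a complexification of $N$, its charts $\psi_\alpha\colon U_\alpha^*\to V_\alpha^*\subseteq E_\C$ have complex analytic transition maps $\psi_\beta\psi_\alpha^{-1}$ extending $\phi_\beta\phi_\alpha^{-1}$ (using Lemma~\ref{idthm}, these are \emph{the} analytic extensions). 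The tangent bundle $T(N_\C)$ is then glued from $V_\alpha^*\times E_\C$ via the complex analytic cocycle given by the differentials $d(\psi_\beta\psi_\alpha^{-1})$. The key observation is that the derivative of an analytic extension extends the derivative of the original map: $d(\psi_\beta\psi_\alpha^{-1})$ restricts on $V_\alpha\times E\subseteq E_\C\times E_\C$ to the complexification of $d(\phi_\beta\phi_\alpha^{-1})$. This gives a fibrewise-linear inclusion of cocycles, hence an inclusion $TN\hookrightarrow T(N_\C)$ compatible with projections, and the local bundle charts $T U_\alpha^*\cong V_\alpha^*\times E_\C$ restrict to $T U_\alpha\cong V_\alpha\times E$ in exactly the way required by the definition of a bundle complexification (both the base chart and the fibre chart carry $E\times E$ inside $E_\C\times E_\C$). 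I would verify that $T(N_\C)$ is modeled on $(E\times E)_\C\cong E_\C\times E_\C$, so the modeling-space condition holds, and that $\pi_{T(N_\C)}$ restricts to $\pi_{TN}$ by construction.

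For part~(b), the main tasks are paracompactness and uniqueness. If $N_\C$ is paracompact and $N$ is modeled on a metrizable locally convex space $E$, then $E_\C=E\times E$ is metrizable, hence $T(N_\C)$ is a manifold modeled on the metrizable space $E_\C\times E_\C$; a manifold modeled on a metrizable space over a paracompact base is itself metrizable once we know the base is metrizable, and a metrizable space is paracompact by \cite[Theorem~5.1.3]{Engelking1989}. Concretely, $N_\C$ paracompact and metrizable-modeled forces $N_\C$ metrizable, whence $T(N_\C)$, being modeled on a metrizable space with paracompact metrizable base, is metrizable and therefore paracompact. Since $E$ is metrizable it is closed in $E_\C$, so as noted in the ``locally closed'' discussion preceding Definition~\ref{cxgeneral}, $N$ is locally closed in $N_\C$; combined with paracompactness this yields that $TN$ is closed in $T(N_\C)$ (the fibres $T_xN=E\times\{0\}$-type subsets are cut out by the closed condition defining $E$ in $E_\C$ in each bundle chart). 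Having verified that $TN$ is closed in the paracompact $T(N_\C)$, uniqueness in the sense of Theorem~B\,(a) is immediate by direct application of that theorem. Finally, $N$ being paracompact follows because $N$ is a closed subset of the paracompact metrizable $N_\C$ (closed subspaces of paracompact spaces are paracompact).

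The hard part will be the bookkeeping in part~(a) that the \emph{fibre} coordinate, not merely the base coordinate, respects the real structure: one must check that $d(\psi_\beta\psi_\alpha^{-1})(y;\,\cdot\,)$ maps $E\subseteq E_\C$ into $E$ whenever $y\in V_\alpha\cap E$, so that the real subbundle sits inside the complex one as a genuine real analytic submanifold with the product real structure $E_\C\times E_\C\supseteq (E\times\{0\})\times(E\times\{0\})$. This follows from the fact that $\psi_\beta\psi_\alpha^{-1}$ maps $V_\alpha^*\cap E$ into $E$ and is real analytic there, so its differential at real points preserves $E$; but making the chart that exhibits $T(N_\C)$ as a complexification of $TN$ (taking $V_\alpha^*\times E_\C$ onto an open subset of $E_\C\times E_\C$ carrying $TN$ onto $(E\times\{0\})\times(E\times\{0\})$) requires care to identify $E_\C\times E_\C$ with $(E\times E)_\C$ correctly and to check the third bullet of Definition~\ref{cxgeneral} in this product form. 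I expect this verification to be essentially routine once the identification of differentials with their analytic extensions is in place, so the genuinely delicate point is organizing the chart-level data rather than any deep argument.
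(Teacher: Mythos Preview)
Your argument for part~(a) is essentially the paper's: the paper works with adapted charts $\Psi$ of $N_\C$, takes the tangent trivializations $T\Psi$, and glues the fibrewise inclusions $T\Psi^{-1}\circ\iota\circ T\psi$ into a real analytic embedding $I\colon TN\to T(N_\C)$. Your cocycle formulation is equivalent.

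Part~(b), however, has two soft spots.

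First, paracompactness of $T(N_\C)$. You write that ``a manifold modeled on a metrizable space over a paracompact base is itself metrizable once we know the base is metrizable,'' but this is not an obvious implication and you do not supply the argument. Metrizability is not local, so knowing that each bundle chart $U^*\times E_\C$ is metrizable does not directly give metrizability of the total space; you would need paracompactness of $T(N_\C)$ first, which is the very thing you are trying to establish. The paper sidesteps this by invoking \cite[Proposition~29.7]{KaM}: a fibre bundle with paracompact, locally metrizable base and metrizable fibre is paracompact.

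Second, and more seriously, your closedness argument does not go through as written. You observe correctly that $N$ is \emph{locally} closed in $N_\C$ and that in each bundle chart the real tangent space is cut out by the closed condition $E\subseteq E_\C$; but you then assert that ``combined with paracompactness this yields that $TN$ is closed in $T(N_\C)$.'' Locally closed plus paracompact does not give closed. What you get from Proposition~\ref{specialize}\,(a) is only that $N$ is closed in some open neighbourhood $\Omega\subseteq N_\C$, not in all of $N_\C$; correspondingly $TN$ is closed in $T\Omega$, not automatically in $T(N_\C)$. The paper handles this with a genuine extra step: it takes a locally finite family of adapted chart domains $X_\Psi\subseteq\Omega$ covering $N$, passes via \cite[Lemma~5.1.6]{Engelking1989} to a subordinate locally finite \emph{closed} cover $(A_\Psi)$, and writes
\[
\im\, I=\bigcup_{\Psi}T\Psi^{-1}\bigl(\Psi(A_\Psi\cap N)\times(E\times\{0\})\bigr),
\]
a locally finite union of closed sets, hence closed. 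Without this refinement (or an equivalent device), the hypothesis ``$M$ closed in $M^*$'' of Theorem~B\,(a) is not verified and the appeal to that theorem is premature.
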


\begin{proof} 
\begin{enumerate}
  \item[{\normalfont (a)}] Let $E$ be the model space of $N$ and
$\Psi \colon N_\C \supseteq U_\Psi \rightarrow V_\Psi \subseteq E_\C$ be an adapted chart of $N_\C$,
i.e.\ $\psi \coloneq \Psi|_{U_\psi \cap N}$ is a chart for $N$.
Then each $T\Psi$ is a bundle trivialisation for the complex analytic bundle which
restricts to the bundle trivialisations $T\psi$ of $TN \rightarrow N$.
Note that the inclusion $\iota E \rightarrow E_\C$ is real analytic. 
Hence for each adapted chart $T\Psi$ of $N_\C$ the map
$I_\Psi \colon \pi_{TN}^{-1}(X_\psi) \rightarrow \pi_{T(N_\C)}^{-1} (X_{\Psi}),
I_\psi \coloneq T\Psi^{-1} \circ \iota \circ T\psi$ is real analytic.
Fix a family $\fF$ of adapted charts for $N_\C$ whose domain covers $N$.
Then the family $(I_\Psi)_{\Psi \in \fF}$ glues to a real analytic map
$I \colon TN \rightarrow T(N_\C)$ which is fibre-wise the inclusion $\iota$.
Moreover it is clear from the construction that $I$ takes $TN$ diffeomorphically
to a real analytic submanifold of $T(N_\C)$.
We conclude that (up to identification) $T(N_\C)$ is a complexification of $TN$.
 \item[{\normalfont (b)}] Let now $N$ be modeled on a metrizable space and
$N_\C$ be paracompact. Hence $E_\C$ is also metrizable.
Now $N_\C$ is paracompact and locally metrizable and the fibre of $T(N_\C)$ is
metrizable, the paracompactness of $T(N_\C)$ follows from \cite[Proposition 29.7]{KaM}.
 
To prove the uniqueness of the germ in the sense of Theorem B\,(a)
it suffices to see that the image of $I$ is closed in $T(N_\C)$.
As $N_\C$ is paracompact and modeled on a metrizable space, $N_\C$ is metrizable. 
Hence Proposition \ref{specialize}\,(a) shows that there is a paracompact
open neighborhood $\Omega \subseteq N_\C$ of $N$ such that $N$ is a
closed subset of $\Omega$. 
Now assume that without loss of generality $X_\Psi \subseteq \Omega$ for all $\Psi \in \fF$.
By paracompactness of $\Omega$, we shrink the family $\fF$ such that the
family of open sets $(X_\Psi)_\fF$ becomes a locally finite cover of $N$ .
Following \cite[Lemma 5.1.6]{Engelking1989}, we can choose a locally finite
closed cover $\cC$ of $\Omega$ subordinate to the locally finite open $\set{X_\Psi}_{\Psi \in \fF} \cup \set{\Omega \setminus N}$.
Denote for all $\Psi \in \fF$ by $A_\Psi$ the element in $\cC$ subordinate to the set $X_\Psi$.
We note that $N \subseteq \bigcup_{\Psi \in \fF} A_\Psi$ holds.
Then 
 $
   \im \, I = \bigcup_{\Psi} (T\Psi^{-1} (\Psi (A_\Psi \cap N) \times (E \times \set{0}))
 $
is closed as a union of a locally finite family of closed sets thus completing the proof.
\end{enumerate}
\end{proof}

Let us complete the picture for bundle trivialisations with finite dimensional paracompact base.

\begin{prop}
 Let $(E,\pi,M)$ be a real analytic locally convex bundle with finite dimensional base $M$.
 If $M$ is paracompact, then $(E,\pi,M)$ admits a bundle complexification.
 Moreover, the bundle complexification is unique
 \end{prop}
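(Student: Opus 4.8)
The plan is to realise the bundle complexification chart-by-chart over a complexification of the base, extending the transition cocycle by the techniques already used in this paper and exploiting that the base is finite-dimensional. Since $M$ is a finite-dimensional, paracompact real analytic manifold, it admits a complexification $M^*$ whose germ around $M$ is uniquely determined \cite{BW1959,grauert}; by Proposition~\ref{specialize}\,(a) I may assume in addition that $M^*$ is paracompact and that $M$ is closed in $M^*$. Let $F$ be the typical fibre of $(E,\pi,M)$, with complexification $F_\C$. First I would fix a locally finite real analytic atlas $(U_i,\varphi_i)_{i\in I}$ of $M$ over which $E$ trivialises, together with charts $U_i^*\subseteq M^*$ with $U_i^*\cap M=U_i$, and record the transition cocycle in the form of the real analytic, fibrewise $\R$-linear maps
\[
\Theta_{ij}\colon (U_i\cap U_j)\times F\to F,\qquad \Theta_{ij}(m,v)=\theta_{ij}(m)\,v,
\]
which satisfy the cocycle identity on triple overlaps (cf.\ the conventions of \cite[Section~3]{hg2013b}).

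Second, I would complexify the cocycle, and here finite-dimensionality of the base is decisive. Around each $m_0\in U_i\cap U_j$, real analyticity of $\Theta_{ij}$ provides a local complex analytic extension; since $\Theta_{ij}$ is fibrewise $\R$-linear, the Identity Theorem (Lemma~\ref{idthm}) forces the extension to be fibrewise $\C$-linear, so that by homogeneity it extends in the fibre variable to all of $F_\C$ and takes the cylindrical form $(m^*,v)\mapsto \theta_{ij}^*(m^*)\,v$ with $\theta_{ij}^*(m^*)$ a continuous $\C$-linear endomorphism of $F_\C$, depending holomorphically on $m^*$ in a neighbourhood of $m_0$ in $U_i^*\cap U_j^*$. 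This reduces the extension problem to the finite-dimensional complex manifold $U_i^*\cap U_j^*$: the local operator-valued extensions agree on overlaps by the Identity Theorem and hence glue to a holomorphic $\theta_{ij}^*$ on a neighbourhood $U_{ij}^*$ of $U_i\cap U_j$. Applying the same reasoning to the extensions of $\theta_{ij}\theta_{ji}=\id$ and of the triple-overlap cocycle relation, I obtain invertibility $\theta_{ij}^*(m^*)\in\mathrm{GL}(F_\C)$ and the cocycle identity for the $\theta_{ij}^*$ after shrinking; by local finiteness of the atlas (as in Lemma~\ref{lem: Bcompl}\,(b) and Proposition~\ref{prop: THMA1}) the shrinking can be done simultaneously, yielding sets $\Omega_i^*$ whose union $\Omega^*$ is a neighbourhood of $M$ in $M^*$ and on whose overlaps the complexified cocycle is defined, invertible and compatible.

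Third, I would glue. Setting
\[
E^*:=\Big(\bigsqcup_{i\in I}\Omega_i^*\times F_\C\Big)\big/\!\sim,
\]
where $(m,v)\in\Omega_j^*\times F_\C$ is identified with $(m,\theta_{ij}^*(m)\,v)\in\Omega_i^*\times F_\C$ for $m\in\Omega_i^*\cap\Omega_j^*$, produces a complex analytic locally convex vector bundle $(E^*,p,\Omega^*)$; it is Hausdorff because its base $\Omega^*\subseteq M^*$ and its fibres are. The fibrewise inclusions $F\hookrightarrow F_\C$ glue to a real analytic embedding of $E$ onto a real analytic subbundle of $E^*$ over $M\subseteq\Omega^*$ with $p|_E=\pi$, so $(E^*,p,\Omega^*)$ is a bundle complexification.

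Finally, uniqueness of the germ follows from the same bookkeeping. Given two bundle complexifications, the uniqueness of the base complexification (Theorem~B\,(a), applicable since $M$ is finite-dimensional and paracompact, or directly \cite{BW1959}) yields a biholomorphism between neighbourhoods of $M$ in the two bases restricting to $\id_M$; pulling one bundle back along it gives two complex analytic bundles over a common base whose cocycles are both holomorphic extensions of the cocycle of $E$, hence agree near $M$ by the Identity Theorem, so the bundles are isomorphic on a neighbourhood of $M$ by an isomorphism restricting to $\id_E$. I expect the principal obstacle to lie in the second step: converting the infinite-dimensional fibrewise extension into a genuinely operator-valued holomorphic cocycle over the finite-dimensional base, and then extracting a single neighbourhood of $M$ on which all transition functions are simultaneously $\C$-linear, invertible and cocycle-compatible --- precisely the point where finite-dimensionality and paracompactness of the base (hence local finiteness of the atlas) are used.
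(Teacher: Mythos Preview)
Your approach is correct and shares the same skeleton as the paper's: complexify the base, extend the transition cocycle complex-analytically, establish $\C$-linearity in the fibre via the Identity Theorem, extend by homogeneity to all of $F_\C$, verify the cocycle condition, and glue. The technical device for the cocycle condition differs, however. The paper realises $M^*$ specifically as a Grauert tube inside $T^*M$, so that $M^*$ carries a scalar action with $[-1,1]\cdot M^*\subseteq M^*$ and $0\cdot z\in M$ for every $z\in M^*$; the extended domains $\tilde U_{ij}$ are chosen $\R$-balanced, which guarantees that every point of an overlap $\tilde U_{ij}\cap\tilde U_{jk}$ is joined to $M$ by the ray $t\mapsto tz$, and hence the cocycle identity (and, in the uniqueness argument, the comparison of two cocycles) follows from the Identity Theorem on the full balanced overlap without any shrinking. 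Your route instead uses paracompactness to fix a locally finite trivialising atlas and then shrinks the complexified chart domains simultaneously so that all pairwise and triple conditions hold at once --- the more folklore argument, and the one you correctly flag as the principal bookkeeping step. Both work; the Grauert-tube trick is slicker in that it sidesteps the simultaneous shrinking entirely, while your argument is more portable in that it does not rely on the special geometry of Grauert tubes and would adapt to any base admitting a paracompact complexification in which it sits as a closed subset.
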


\begin{proof}
 Let $F$ be the typical fibre of the bundle $(E,\pi,M)$. 
 We describe the bundle structure of $(E,\pi,M)$ via cocycles. 
 To this end fix an atlas $\fF$ of real analytic bundle trivialisations for $(E,\pi,M)$ indexed by a set $I$.
 For each pair $(i,j) \in I^2$ we obtain a cocycle $g_{ij} \colon X_{ij} \coloneq
 X_{\psi_i} \cap X_{\psi_j} \times F \rightarrow F$, i.e.\ a real analytic mapping which is linear in the second component.
 We will now extend the cocycles to complex analytic mappings whose domains glue to a complex analytic vector bundle.
 
 As $M$ is a finite dimensional and paracompact real analytic manifold, $M$ admits a complexification $M^*$.
 Following \cite{grauert}, we can realize $M^*$ as a Grauert tube, i.e.\ $M^* \subseteq T^*M$ is an open
 $\R$-balanced neighborhood of the zero-section.
 Thus there is a canonical scalar multiplication on $M^*$ such that $[-1,1] \cdot M^* \subseteq M^*$ holds.
 
 Each cocycle $g_{ij}, i,j\in I$ extends to a complex analytic map
 $\tilde{g}_{ij} \colon \tilde{W} \rightarrow F_\C$ where $\tilde{W} \subseteq M^* \times F_\C$ is an open neighborhood of $U_{ij}$.
 Shrinking $\tilde{W}$ we can assume that $\tilde{W} = \tilde{U}_{ij} \times \Omega \subseteq M^* \times F_\C$,
 where $\tilde{U}_{ij} \subseteq M^*$ is an $\R$-balanced neighborhood of $U_{ij}$ and $\Omega \subseteq F_\C$ is a zero-neighborhood.
 Now we shrink $\Omega$ such that it becomes a $\C$-balanced set, i.e.\ $\overline{B_1^\C} (0) \cdot \Omega = \Omega$. 
 We consider the restriction of $\tilde{g}_{ij}$ to $\tilde{U}_{ij} \times \Omega$ which we also denote by $\tilde{g}_{ij}$.
 Then the complex analytic maps 
 \begin{align*}
  \tilde{U}_{ij} \times \Omega \times \overline{B_1^\C (0)} &\rightarrow F_\C , (x,v,z) \mapsto \tilde{g}_{ij} (x,zv) \text{ and }\\ 
  \tilde{U}_{ij} \times \Omega \times \overline{B_1^\C (0)} &\rightarrow F_\C , (x,v,z) \mapsto z\tilde{g}_{ij} (x,v)
 \end{align*}
 coincide on the the total real subset $U_{i,j} \times (\Omega \cap F) \times [0,1]$ as $g_{ij}$ is (real)-linear. 
 The identity theorem for real analytic maps thus shows that both maps coincide and we obtain the formula
 $\tilde{g}_{ij} (x,zv) = z \tilde{g}_{ij} (x,v)$ for all $(x,v,z) \in \tilde{U}_{ij} \times \Omega \times B_1^\C (0)$.
 A similar argument shows that for $v,w \in \Omega$ with $v+ w \in \Omega$ the formula
 $\tilde{g}_{ij} (x,v+w) = \tilde{g}_{ij} (x,v) +\tilde{g}_{ij} (x,w)$ holds.
 Thus for each pair $(i,j) \in I^2$ we obtain a well defined complex analytic map 
  \begin{displaymath}
   \tilde{h}_{ij} \colon \tilde{U}_{ij} \times F_\C \rightarrow F_\C,
   \tilde{h}_{ij} (x,v) \coloneq \tilde{g}_{ij} (x,zv) \cdot \frac{1}{z} \text{ for } z \in \C^\times \text{ with } zv \in \Omega
  \end{displaymath}
 which is complex linear in the second component.
 
 We now replace $M^*$ with the open subset $\bigcup_{i,j \in I} \tilde{U}_{ij}$.
 Then the $\tilde{h}_{ij}$ are cocycles for a complex analytic bundle $E^* \rightarrow M^*$ 
 To see this we have to establish the cocycle condition. Consider $i,j,k \in I$ with $\tilde{U}_{ij} \cap \tilde{U}_{jk} \neq \emptyset$.
 For each $z \in \tilde{U}_{ij} \cap \tilde{U}_{jk}$ we have $0\cdot z \in U_{ij} \cap U_{jk} \subseteq \tilde{U}_{ij} \cap \tilde{U}_{jk}$.
 From the cocycle condition for the maps $g_{ij}$ we derive
 $\tilde{h}_{ij} (0z,\tilde{h}_{jk} (0z ,v)) = g_{ij} (0z,g_{jk} (0z,v))
 = g_{ik} (0z,v) = \tilde{h}_{ik} (0z,v)$ for all $v \in \Omega \cap F$.
 Again the identity theorem for complex analytic maps yields
 $\tilde{h}_{ij} (z,\tilde{h}_{jk} (z ,v)) = \tilde{h}_{ik} (z,v)$ for all $z \in \tilde{U}_{ij} \cap \tilde{U}_{jk} \times \Omega$.
 We conclude that the cocycles $(\tilde{h}_{ij})_{i,j\in I}$ give rise to a complex analytic bundle
 $\pi^* \colon E^* \rightarrow M^*$ which is a bundle complexification of $(E,\pi,M)$.
 
 To see that this bundle complexification is unique, let $(E^\bullet, p, M^\bullet)$ be another bundle complexification.
 As the complexification of the finite-dimensional paracompact manifold $M$ is unique in the sense of Theorem B\,(a),
 without loss of generality we may assume $M^\bullet = M^*$.
 Let $f_{\alpha\beta} \colon X_{\alpha\beta} \times F_\C \rightarrow F_\C$ with $\alpha, \beta \in J$ be a family of cocycles
 which defines the bundle structure for $(E^\bullet, p M^*)$.
 Note that $X_{\alpha,\beta} \subseteq M^*$ for all $\alpha, \beta \in J$.
 Going to open subsets of $E^\bullet$ and $M^*$, we may assume that $X_{\alpha\beta}$ is also $\R$-balanced.
 In particular this implies for $i,j\in I$ and $\alpha, \beta \in J$ that the intersection $\tilde{U}_{ij} \cap X_{\alpha\beta}$ is $\R$-balanced.
 Hence if $\tilde{U}_{ij} \cap X_{\alpha\beta} \neq \emptyset$ the cocycles $f_{\alpha,\beta}$ and $\tilde{h}_{ij}$ 
 restrict on the real subset $X_{\alpha\beta} \cap U_{ij} \times F$ to cocycles of the bundle $E \rightarrow M$.
 Thus the identity theorem for complex analytic maps shows that the composition of
 $f_{\alpha\beta}$ and $\tilde{h}_{ij}$ is a cocycle for both complex bundles $E^\bullet \rightarrow M^*$ and $E^*\rightarrow M^*$.
 We conclude that there is an open neighborhood $W$ of $E$ in $E^\bullet$ together with a complex analytic diffeomorphism
 $\varphi \colon W \rightarrow \varphi (W) \subseteq E^*$ such that $\varphi (W)$ is an open neighborhood of $E$ in $E^*$ and $\varphi|_E = \id_E$. 
\end{proof}
%
%
%
%
%
%
%
%
%
%
%
%
%
%
%
%
\section{The compactly regular locally convex vector space {\boldmath$\Germ_\C(K,Z)$}}				\label{sec-germ-compactly regular}
\newcommand{\func}[5]{#1 \colon #2 \longrightarrow #3 : #4 \mapsto #5}
\newcommand{\smfunc}[3]{#1 \colon #2 \longrightarrow #3}
\newcommand{\nnfunc}[4]{#1 \longrightarrow #2 : #3 \mapsto #4}
\newcommand{\seqn}[1]{\left(#1\right)_{n\in \N}}

\newcommand{\BHol}[2]{\text{BHol}\left(#1,#2\right)}
\newcommand{\GermC}[2]{\Germ_\C  \left(#1,#2\right)}
\newcommand{\oBallin}[3]{\mathrm{B}_{#1}^{#2}\left(#3 \right)}
\newcommand{\cBallin}[3]{\overline{\mathrm{B}}_{#1}^{#2}\left(#3 \right)}
\newcommand{\cl}[1]{\left[ #1 \right]}
\newcommand{\supnorm}[1]{\norm{#1}_\infty}
\newcommand{\opnorm}[1]{\ensuremath{  \norm{  #1 }_{\mathrm{op}} }\!  }
\newcommand{\Znorm}[1]{\norm{#1}_Z}
\newcommand{\abs}[1]{\left|	#1	\right|}

\renewcommand{\epsilon}{\varepsilon}

\newcommand{\h}{\mathfrak{h}}
\newcommand{\g}{\mathfrak{g}}
\newcommand{\br}[2]{\left[ #1 , #2 \right]}
\newcommand{\hnorm}[1]{\norm{#1}_\h}
\newcommand{\BoundOp}[1]{\mathcal{L}\left(#1\right)}

\newcommand{\Aut}{\mathrm{Aut}}
\newcommand{\GL}{\mathrm{GL}}
\newcommand{\AD}{\mathrm{AD}}
\newcommand{\EXP}{\mathrm{EXP}}
\newcommand{\cG}{\mathcal{G}}
\newcommand{\cN}{\mathcal{N}}
\newcommand{\gen}[1]{\langle #1 \rangle}

Let $M$ be a $\K$-analytic manifold modeled on a metrizable locally convex topological
$\K$-vector
space $X$ and let $K\subseteq M$ be a compact subset.
For a $\K$-analytic Banach-Lie group $H$, we consider all
$\K$-analytic functions $\smfunc{\gamma}{U_\gamma}{H}$, defined on an open neighborhood
$U_\gamma$ of $K$ in~$M$.
Identifying two such functions if they agree on a common smaller neighborhood,
we obtain the set $\Germ_\K(K,H)$ of equivalence classes $[\gamma]$
(the \emph{germs} around $K$).
With the operation $[\gamma][\eta]:=[\gamma\eta]$ inherited
by pointwise multiplication of representatives, the set $\Germ_\K(K,H)$
becomes a group. In the special case that $Z$
is a Banach space over $\K$,
we define $z[\gamma]:=[z\gamma]$ for $z\in \K$
and $[\gamma]\in \Germ_\K(K,Z)$.
In this way, $\Germ_\K(K,Z)$
becomes a vector space over $\K$.\\[2.3mm]
In this and the following section, we shall always assume
that $\K=\C$; all manifolds and vector spaces will be complex.
In Section \ref{sec-reg-cx}, we show that the group $\Germ_\C(K,H)$
carries a $C^0$-regular Lie group structure. In the present section, we consider the easier case of
germs with values in a Banach space instead of a Banach Lie group:

To this end, for the rest of this section, let $Z$ be a complex Banach space. Then, as just explained,
the set $\GermC{K}{Z}$ carries a natural vector space structure. Our first step is to define a suitable topology on it.
We start by observing that each function $\smfunc{\gamma}{U}{Z}$ defined on an open neighborhood
of $K$ maps $K$ to a compact, hence bounded subset of the space $Z$. Therefore $\gamma(K)$ is
contained in an open ball $\oBallin{R}{Z}{0}$ for an $R>0$.
The preimage $V:=\gamma^{-1}(\oBallin{R}{Z}{0})$ is now an open neighborhood of $K$
such that $\gamma|_V$ is bounded. This means that each germ $\cl{\gamma}\in\GermC{K}{Z}$
can be represented by a \emph{bounded} analytic function on a
suitable open neighborhood of $K$.

For each open neighborhood $U\subseteq X$, the space $\BHol{U}{Z}$ of bounded
$\C$-analytic functions from $U$ to $Z$ is a closed vector subspace of the Banach space
$(BC(U,Z),\norm{\cdot}_\infty)$ of bounded continuous maps endowed with the
sup-norm and hence, $\BHol{U}{Z}$ is a Banach space as well.

This allows us to endow the space $\GermC{K}{Z}$ with the locally convex inductive
limit topology with respect to all Banach spaces $\BHol{U}{Z}$, where $U$ ranges
through all open neighborhoods of $K$. 
Note that at this moment, we do not know if this topology is Hausdorff.
Before examining this space in greater detail, we state a topological lemma
which will be helpful later on:
\begin{lem}						\label{lem_compact_union_and_countable_basis}
 Let $K$ be a compact subset of a manifold $M$, modeled on a metrizable space $X$.
 \begin{itemize}
  \item [\rm(a)] The set  $K$ can be written as a finite union of $($not necessarily disjoint$)$
  compact sets $K_1,\ldots, K_m$ such that each $K_j$ is contained in some chart.
  \item [\rm(b)] There exists a sequence $U_1\supseteq U_2 \supseteq U_3 \supseteq \cdots$ of open subsets,
  forming a basis of neighborhoods of $K$ in $M$, i.e.~each open neighborhood of $K$ contains
  one of the sets $U_n$ as a subset. This sequence may be chosen in a way such that each connected
  component of each $U_n$ has a nonempty intersection with $K$.
 \end{itemize}
\end{lem}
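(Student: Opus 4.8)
The plan is to establish (a) by a soft topological shrinking argument, and then to bootstrap it to the countable basis in (b) by transporting metric neighborhoods through the finitely many charts produced in (a). For (a), I would first cover $K$ by chart domains: each $x\in K$ lies in the domain $V_x$ of a chart $\varphi_x\colon V_x\to \varphi_x(V_x)\subseteq X$, and by compactness finitely many domains $V_1,\dots,V_m$ already cover~$K$. Since $M$ is Hausdorff, the compact set $K$ is normal in its subspace topology, so the finite open cover $\{K\cap V_i\}_{i=1}^m$ of~$K$ admits a closed shrinking: there are sets $K_i\subseteq K\cap V_i$, closed in~$K$, with $K=\bigcup_{i=1}^m K_i$. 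Each $K_i$ is then a closed subset of the compact set~$K$, hence compact, and it is contained in the chart domain~$V_i$. This proves (a).

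For (b), I would fix the decomposition $K=K_1\cup\dots\cup K_m$ together with the charts $\varphi_i\colon V_i\to\varphi_i(V_i)\subseteq X$ and a metric~$d$ inducing the topology of~$X$. Each $\varphi_i(K_i)$ is compact in~$X$, so for $n\in\N$ the sets
\[
O_{i,n} := \varphi_i^{-1}\bigl( \{\, y\in\varphi_i(V_i) : d(y,\varphi_i(K_i))<1/n \,\} \bigr)
\]
are open neighborhoods of~$K_i$ in~$M$, decreasing in~$n$, and I would put $U_n:=\bigcup_{i=1}^m O_{i,n}$, which is open, contains~$K$, and satisfies $U_1\supseteq U_2\supseteq\cdots$. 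To verify the basis property, let $W\supseteq K$ be open. For each~$i$ the set $\varphi_i(W\cap V_i)$ is open in~$X$ and contains the compact set $\varphi_i(K_i)$; since a compact set and a disjoint closed set in a metric space have positive distance, there is $\varepsilon_i>0$ with $\{\,y:d(y,\varphi_i(K_i))<\varepsilon_i\,\}\subseteq\varphi_i(W\cap V_i)$. Choosing $n$ with $1/n<\min_i\varepsilon_i$ then gives $O_{i,n}\subseteq W\cap V_i\subseteq W$ for all~$i$, whence $U_n\subseteq W$.

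To enforce the final condition that every connected component of every neighborhood meets~$K$, I would use that~$X$, being locally convex, is locally connected (it has a base of convex, hence connected, open sets); consequently~$M$ is locally connected and the connected components of the open set~$U_n$ are themselves open. Replacing $U_n$ by the union $U_n'$ of those components that meet~$K$ leaves $U_n'$ open and containing~$K$, still a basis (since $U_n'\subseteq U_n$), and decreasing, because a component of $U_{n+1}$ meeting~$K$ is a connected subset of $U_n$ and so lies in a single component of~$U_n$, which then also meets~$K$ and hence belongs to~$U_n'$. By construction every component of $U_n'$ meets~$K$, as desired.

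The routine parts are the two shrinking steps (the normal-space shrinking lemma for~(a) and the metric $1/n$-neighborhoods for~(b)); the point requiring genuine care is coordinating the finitely many charts so that $(U_n)$ is truly cofinal in the neighborhood filter of~$K$ — this is exactly where compactness of each $\varphi_i(K_i)$ and the positive compact-to-closed distance in the metric space~$X$ are used — together with checking that the passage to components preserves both the basis property and the monotonicity simultaneously. Here local connectedness of the model space, coming from local convexity, is what legitimizes the component refinement.
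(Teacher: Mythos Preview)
Your proof is correct and for part~(b) follows essentially the paper's approach (metric $1/n$-neighborhoods in each chart, then discard components missing~$K$), with the added care of verifying that the component refinement preserves monotonicity and the basis property, and of noting that local convexity gives the local connectedness needed for components of open sets to be open. For part~(a), however, you take a genuinely different route: the paper separates the pairwise-disjoint compact sets $K\setminus U_j$ inside the Hausdorff space~$M$ and proceeds by induction on~$m$, whereas you observe that $K$ itself is compact Hausdorff, hence normal, and invoke the shrinking lemma for finite open covers in one stroke. Your argument is shorter and avoids the induction; the paper's argument, by contrast, uses only the Hausdorff property of the ambient manifold and never needs to pass to the subspace topology on~$K$.
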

\begin{proof}
 (a):\\
 The domains of charts for the manifold $M$ form an open cover of the compact set $K$.
Therefore, $K$ is contained in a \emph{finite} union of open chart domains $U_1,\ldots, U_m$.
We will only show the statement for the case $m=2$, the general case follows by an easy induction argument.
To this end, assume that $K\subseteq U_1\cup U_2$. The compact sets $A_1:=K\setminus U_2$ and
$A_2:=K\setminus U_1$ are disjoint in the Hausdorff space $M$ and thus have disjoint open neighborhoods
$W_1$ and $W_2$, respectively. Now, we have the decomposition
$K=K_1\cup K_2$ with $K_1:= K\setminus W_2$ and $K_2:=K\setminus W_1$.

 (b):\\
 By part (a), we can write the compact set $K$ as a finite union $K=\bigcup_{j=1}^m K_j$ of
compact sets each of which is contained in a chart domain $U_j$. For each $K_j$ which is a
compact subset of a metrizable open set $U_j$, we can find a sequence $\seqn{U_n^{(j)}}$,
which is a basis of open neighborhoods of $K_j$ using a metric on $U_j$. By taking the union
$\widetilde{U}_n:=\bigcup_{j=1}^m U_n^{(k)}$, we obtain a sequence of open neighborhoods of $K$.
 With this construction, it is not automatic that each connected component of each $\widetilde{U}_n$
has a nonempty intersection with $K$. However, by setting $U_n$ as the union of all connected components
of $\widetilde{U}_n$ intersecting $K$ nontrivially, we obtain a sequence $\seqn{U_n}$ having the desired properties.
\end{proof}
\noindent
From now on, we fix a basis of open neighborhoods $\seqn{U_n}$ of $K$ as in part(b) of the preceeding
lemma and  consider the Banach spaces $\BHol{U_n}{Z}$ with the corresponding bonding maps

\[
 \func{\iota_{m,n}}{ \BHol{U_m}{Z} }{ \BHol{U_n}{Z} }{\gamma}{\gamma|_{U_m}} \hbox{ for }m\leq n.
\]
These mappings are obviously continuous (with operator norm at most one) and since each $U_n$ meets the
compact set $K$, we obtain the injectivity of the bonding maps by the Identity Theorem (Lemma \ref{idthm}).

Since the sequence $\seqn{U_n}$ is cofinal in the directed set of all open neighborhoods, we obtain the following
\begin{prop}							\label{prop_GermKZ}
The locally convex vector topology on the space $\GermC{K}{Z}$ defined above
makes it the locally convex direct limit of the sequence
\[
\bHol(U_1,Z)
 \mathop{\longrightarrow}^{\iota_{1,2}}
 \bHol(U_2,Z) \mathop{\longrightarrow}^{\iota_{2,3}}
 \bHol(U_3,Z) \mathop{\longrightarrow}^{\iota_{3,4}} \cdots
\]
In particular, $\GermC{K}{Z}$ is an \emph{(LB)}-space, i.e.~a direct limit of an
ascending \emph{sequence} of Banach spaces with injective continuous bonding maps.
The topology on $\GermC{K}{Z}$ does not depend on the choice of the sequence $\seqn{U_n}$.
\end{prop}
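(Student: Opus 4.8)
The plan is to recognise the stated identification as an instance of the standard fact that a locally convex inductive limit is unchanged under passage to a cofinal subsystem, and then to read off the (LB)-property and the independence of the sequence as formal consequences. Write $\mathcal{N}$ for the directed set of all open neighborhoods of $K$ in $M$, ordered by reverse inclusion (so that $U\le U'$ means $U\supseteq U'$, and $\mathcal{N}$ is directed because $U_1\cap U_2$ is a common smaller neighborhood), and for $U\in\mathcal{N}$ let $\lambda_U\colon\BHol{U}{Z}\to\GermC{K}{Z}$, $\gamma\mapsto[\gamma]$, be the map assigning to a bounded $\C$-analytic function its germ. By definition, the topology on $\GermC{K}{Z}$ is the finest locally convex topology $\mathcal{T}$ making every $\lambda_U$ continuous; I want to compare it with the finest locally convex topology $\mathcal{T}'$ making every $\lambda_{U_n}$ continuous, the latter being exactly the direct limit topology of the sequence $\seqn{\BHol{U_n}{Z}}$ with the bonding maps $\iota_{n,n+1}$.

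First I would check the inequality $\mathcal{T}\le\mathcal{T}'$: a seminorm $p$ on $\GermC{K}{Z}$ is $\mathcal{T}$-continuous precisely when $p\circ\lambda_U$ is continuous for every $U\in\mathcal{N}$, and this trivially forces $p\circ\lambda_{U_n}$ to be continuous for every $n$, so each $\mathcal{T}$-continuous seminorm is $\mathcal{T}'$-continuous. The substance lies in the reverse inequality, where \emph{cofinality} enters. Given an arbitrary $U\in\mathcal{N}$, the basis property from Lemma~\ref{lem_compact_union_and_countable_basis}(b) supplies an index $n$ with $U_n\subseteq U$; then the restriction map $\rho\colon\BHol{U}{Z}\to\BHol{U_n}{Z}$, $\gamma\mapsto\gamma|_{U_n}$, is well defined and contractive (hence continuous), and since a germ around $K$ is unaffected by restriction to a smaller neighborhood of $K$, one has the factorisation $\lambda_U=\lambda_{U_n}\circ\rho$. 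Consequently, if $p$ is $\mathcal{T}'$-continuous, then $p\circ\lambda_U=(p\circ\lambda_{U_n})\circ\rho$ is continuous for every $U\in\mathcal{N}$, so $p$ is $\mathcal{T}$-continuous. This gives $\mathcal{T}'\le\mathcal{T}$, whence $\mathcal{T}=\mathcal{T}'$, which is the asserted description as the direct limit of the sequence.

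The (LB)-property then follows by collecting facts already in place: each $\BHol{U_n}{Z}$ is a Banach space, the bonding maps $\iota_{n,n+1}$ are continuous with operator norm at most one and injective by the Identity Theorem (as recorded before the statement), and $\GermC{K}{Z}=\bigcup_n\lambda_{U_n}(\BHol{U_n}{Z})$ because every germ admits a bounded representative whose restriction to a suitable $U_n$ still lies in $\BHol{U_n}{Z}$ and represents the same germ. I would also note that each $\lambda_{U_n}$ is injective: if $[\gamma]=0$ then $\gamma$ vanishes on some $U_m$ with $U_m\subseteq U_n$, and since every connected component of $U_n$ meets $K\subseteq U_m$ in a nonempty open set, the Identity Theorem forces $\gamma=0$. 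Finally, independence of the choice of $\seqn{U_n}$ is immediate: the topology $\mathcal{T}$ is defined intrinsically through all of $\mathcal{N}$ and never mentions a sequence, while the argument above shows that $\mathcal{T}$ coincides with the sequential direct-limit topology for every admissible basis $\seqn{U_n}$.

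I expect the only genuinely delicate point to be the reverse inequality in the second paragraph --- specifically, pinning down that the restriction maps are continuous and that the germ maps factor through them, so that the countable cofinal subfamily already carries the full inductive-limit topology; everything else is either formal or quoted from the preceding discussion. (Note that Hausdorffness is neither needed nor claimed here, in keeping with the remark that it is not yet known at this stage.)
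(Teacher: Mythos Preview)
Your proof is correct and follows exactly the approach the paper indicates: the paper simply says ``Since the sequence $\seqn{U_n}$ is cofinal in the directed set of all open neighborhoods, we obtain the following'' and then states the proposition without further argument. Your write-up is a careful unpacking of precisely this cofinality principle, so there is nothing to contrast.
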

In general, (LB)-spaces need not be well-behaved, for example an (LB)-space need not be Hausdorff
and even if it is Hausdorff, it need not be complete. However, for the class of
\emph{compact regular} (LB)-spaces, the situation is nicer:
\begin{lem}[Criterion for compact regularity]							\label{lem_compact_regularity}
 Let $E:= \bigcup_{n=1}^\infty E_n$ be a locally convex direct limit of Banach spaces.
 Consider the following statements:
 \begin{itemize}
  \item [\rm(i)]	For every $n\in\N$, there is an $m\geq n$ such that
  for each $\ell\geq m$, there is an open absolutely convex $0$-neighborhood $\Omega$ of $E_n$
  such that $E_\ell$ and $E_m$ induce the same topology on the set $\Omega$.
  \item [\rm(ii)]
		$ \forall n\in\N \ \exists m\geq n\ \forall \epsilon>0 , \ell\geq n\ \exists \delta>0 :
  \cBallin{1}{E_n}{0} \cap \cBallin{\delta}{E_\ell}{0}\subseteq \cBallin{\epsilon}{E_m}{0}$.
  \item [\rm(iii)]	The sequence $\seqn{E_n}$ is \emph{compactly regular}, i.e.~for every compact
  subset $C$ of $E$ there is an index $n\in\N$ such that $C$ is compact in $E_n$.
  \item [\rm(iv)]	The locally convex vector space $E$ is Hausdorff and complete.
 \end{itemize}
 Then \emph{(i), (ii)} and \emph{(iii)} are equivalent  and imply \emph{(iv)}.
\end{lem}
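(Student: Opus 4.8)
The plan is to close the triangle of equivalences by proving $\text{(i)}\Leftrightarrow\text{(ii)}$, $\text{(ii)}\Rightarrow\text{(iii)}$ and $\text{(iii)}\Rightarrow\text{(ii)}$, and then to deduce (iv) from (i). First I would normalize: after rescaling the individual norms we may assume the inclusions $E_n\hookrightarrow E_{n+1}$ are norm-decreasing, so that $\|\cdot\|_\ell\le\|\cdot\|_m\le\|\cdot\|_n$ on $E_n$ whenever $n\le m\le\ell$; this makes the three scales occurring in (ii) comparable and reduces every later estimate to bookkeeping with radii. The equivalence $\text{(i)}\Leftrightarrow\text{(ii)}$ is then routine. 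For $\text{(ii)}\Rightarrow\text{(i)}$, given $n$ I take the $m$ from (ii) and put $\Omega:=\oBallin{1}{E_n}{0}$; only the inclusion of the $E_m$-topology into the $E_\ell$-topology on $\Omega$ needs checking (the reverse is free from the normalization), and since any difference of two points of $\Omega$ lies in $\oBallin{2}{E_n}{0}$, applying (ii) at radius $2$ converts $E_\ell$-closeness into $E_m$-closeness uniformly on $\Omega$. Conversely, for $n\le\ell\le m$ the inclusion $\cBallin{\delta}{E_\ell}{0}\subseteq\cBallin{\delta}{E_m}{0}$ is automatic, while for $\ell\ge m$ the neighborhood furnished by (i) contains a ball $\oBallin{r}{E_n}{0}$ on which the $E_\ell$- and $E_m$-topologies coincide, and reading off this agreement at the origin produces the required $\delta$.

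For $\text{(ii)}\Rightarrow\text{(iii)}$, let $C\subseteq E$ be compact, hence bounded. The decisive step is to place $C$ inside a single step, i.e.\ to show that (ii) forces every bounded set to be contained and bounded in some $E_m$. Were this to fail for $C$, I would pick $x_k\in C$ lying in ever higher steps and use (ii) to manufacture an absolutely convex set $U\subseteq E$ meeting each $E_n$ in a $0$-neighborhood (hence an $E$-neighborhood of $0$) that isolates the points $x_k$ from one another, contradicting compactness. Once $C$ is contained in a bounded subset $B$ of some $E_m$, condition (i)/(ii) makes the $E$-topology and the $E_m$-topology agree on $B$ (bounded retractivity), so that the $E$-compact set $C$ is automatically compact in the Banach space $E_m$.

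The converse $\text{(iii)}\Rightarrow\text{(ii)}$ I would prove contrapositively. If (ii) fails at some fixed $n$, then for each admissible middle index $m$ there are $\epsilon_m>0$, an index $\ell_m$ and a point $z_m\in\cBallin{1}{E_n}{0}\cap\cBallin{\delta_m}{E_{\ell_m}}{0}$ with $z_m\notin\cBallin{\epsilon_m}{E_m}{0}$, where the $\delta_m$ may be chosen as small as we please. Arranging the $z_m$ along a diagonal yields a sequence converging to $0$ in $E$ that nevertheless has no subsequence convergent in any fixed step $E_j$; its range together with $0$ is then $E$-compact but compact in no $E_j$, contradicting (iii). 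I expect these two diagonal constructions to be the main obstacle of the lemma, the delicate point being to verify convergence in $E$ while simultaneously keeping the three scales $E_n$, $E_m$ and $E_{\ell_m}$ under control.

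Finally I deduce (iv) from (i). Every nonzero vector lies in some step $E_n$; applying (i) to that $n$ gives a $0$-neighborhood $\Omega$ of $E_n$ on which $E$ induces the same topology as the Banach space $E_m$, which is Hausdorff. Scaling the vector into $\Omega$ therefore separates it from $0$, and since $\overline{\{0\}}$ is a linear subspace this forces $\overline{\{0\}}=\{0\}$, so $E$ is Hausdorff. For completeness I would take a Cauchy net in $E$: its tail is bounded, hence (by the regularity extracted in the proof of $\text{(ii)}\Rightarrow\text{(iii)}$) eventually contained in a bounded subset $B$ of some $E_m$; bounded retractivity makes the net $E_m$-Cauchy, so it converges in the Banach space $E_m$, and this limit is its $E$-limit. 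Thus (i)–(iii) all yield both assertions of (iv).
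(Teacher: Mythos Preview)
The paper does not prove this lemma from scratch: beyond observing that (ii) is a reformulation of~(i), it cites \cite{wengenroth} for everything---Theorem~6.4 there for the equivalence of (i) and (iii) (via the notion of \emph{acyclicity}), Proposition~6.3 for Hausdorffness, and Corollary~6.5 for completeness. Your proposal instead attempts a self-contained argument; the plan for (i)$\Leftrightarrow$(ii) is fine, and your outlines of the two diagonal constructions behind (ii)$\Leftrightarrow$(iii) are a reasonable sketch of what is essentially Wengenroth's Theorem~6.4, though you should be aware that filling them in is not short.

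There is, however, a genuine gap in your derivation of~(iv). Your completeness argument rests on the assertion that a Cauchy net in a locally convex space has a bounded tail, and this is false in general. In $\R^{\N}$ with the product topology, take the net indexed by the finite subsets $F\subseteq\N$ ordered by inclusion, with $(x_F)_k=|F|$ for $k\notin F$ and $(x_F)_k=0$ for $k\in F$. Each coordinate is eventually zero, so the net converges to~$0$; but for every index $F_0$ and every $k\notin F_0$ the $k$-th coordinate is unbounded along the tail $\{F:F\supseteq F_0\}$. Thus \emph{no} tail is bounded, and the chain ``tail bounded $\Rightarrow$ contained and bounded in some $E_m$ $\Rightarrow$ $E_m$-Cauchy'' breaks at the first link. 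A smaller issue affects your Hausdorff argument: condition~(i) only says that each individual $E_\ell$ agrees with $E_m$ on a neighbourhood in $E_n$, not that the inductive-limit topology of $E$ does. The implication ``compactly regular $\Rightarrow$ complete'' for (LB)-spaces is a substantive theorem; the bounded-retractivity reasoning you sketch yields quasi-completeness, but the passage to full completeness in \cite{wengenroth} goes through acyclicity and is not recovered by your argument.
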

\begin{proof}
The equivalence of (i) and (ii) is clear, since (ii) is just a restatement of (i). The rest of this proposition
follows from statements in \cite{wengenroth}:
To be more concrete: 
\cite[Theorem 6.4]{wengenroth} says that (i) is equivalent to (iii) and that both are equivalent to a property
of $\seqn{E_n}$ called \emph{acyclicity}. \cite[Proposition 6.3]{wengenroth} shows that the limit topology
of an acyclic sequence is Hausdorff and  \cite[Corollary 6.5]{wengenroth} tells us that the limit
of an acyclic sequence is complete. All of this is true in the more general setting of (LF)-spaces
but we shall not use this here.
\end{proof}
\noindent
In the rest of this section, we show that $\GermC{K}{Z}$ is compactly regular.
This will not only garantee that the space is a complete Hausdorff topological vector space but also
that the (local and global) Lie groups we will construct later in Theorem \ref{thm_local_regularity}
and Theorem \ref{thm_GermC_Lie_group_structure} are $C^0$-regular.

As a first step, we consider the case where there is no manifold $M$ involved, i.e.~$K$ is a compact subset of the space $X$.
The following fact will be useful:

\begin{lem}[Factorization of bounded holomorphic functions]			\label{lem_Banach_factor}
 Let $X$ be a complex locally convex space and let $\smfunc{p}{X}{[0,+\infty[}$ be a continuous
seminorm with open unit ball $B^p\subseteq X$. Let $X_p$ be the Banach space obtained as the
completion of the normed space $(X/p^{-1}(\set{0}),p)$
and let $\smfunc{\pi_p}{X}{X_p}$ denote the canonical continuous linear map.
For a subset $K\subseteq X$, we consider the open set $U:=K+ B^p$.
Then the following map is an isometric isomorphism of Banach spaces:
 \[
  \nnfunc{\bHol(W,Z) }{\bHol(U,Z)}{\gamma}{\gamma\circ \pi_p,}
 \]
 where $W:=\pi_p(K)+ \oBallin{1}{X_p}{0}$ is the corresponding open subset in the Banach space $X_p$.

 In particular, every bounded complex analytic map defined on $U$ factors through the Banach space $X_p$.
\end{lem}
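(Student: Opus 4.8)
The plan is to show that the pullback map $\Phi\colon \bHol(W,Z)\to \bHol(U,Z)$, $\gamma\mapsto \gamma\circ \pi_p$, is a linear isometric bijection. First I would record the elementary geometric facts relating $U$ and $W$. Writing $N:=p^{-1}(\set{0})=\ker\pi_p$ and recalling that $\norm{\pi_p(x)}_{X_p}=p(x)$, one checks directly that $\pi_p^{-1}(W)=U$ and that $\pi_p(U)=W\cap \pi_p(X)$; since $\pi_p(X)$ is dense in $X_p$ and $W$ is open, $\pi_p(U)$ is dense in $W$. Linearity of $\Phi$ is immediate, and well-definedness holds because $\pi_p$ is continuous linear (hence complex analytic) with $\pi_p(U)\subseteq W$, so $\gamma\circ\pi_p$ is holomorphic and bounded with $\supnorm{\gamma\circ\pi_p}\le \supnorm{\gamma}$. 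The reverse inequality, and hence the isometry, follows from $\supnorm{\gamma\circ\pi_p}=\sup_{w\in \pi_p(U)}\norm{\gamma(w)}$ together with density of $\pi_p(U)$ in $W$ and continuity of $\gamma$. In particular $\Phi$ is injective, and it remains to prove surjectivity, which is exactly the ``in particular'' factorization claim.

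For surjectivity, fix $f\in\bHol(U,Z)$. The conceptually decisive step is to show that $f$ is constant on the fibres of $\pi_p$ meeting $U$. Suppose $x,x'\in U$ with $\pi_p(x)=\pi_p(x')$, i.e.\ $p(x'-x)=0$. Then for every $t\in\C$ the point $x+t(x'-x)$ still lies in $U$: if $x=k+b$ with $k\in K$ and $p(b)<1$, then $p\big(x+t(x'-x)-k\big)\le p(b)+\abs{t}\,p(x'-x)=p(b)<1$. Thus the whole complex line $t\mapsto x+t(x'-x)$ lies in $U$, and $g(t):=f\big(x+t(x'-x)\big)$ is a bounded entire $Z$-valued function. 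By Liouville's theorem (applied to $\lambda\circ g$ for each continuous functional $\lambda$ on $Z$, together with Hahn--Banach) $g$ is constant, whence $f(x')=g(1)=g(0)=f(x)$. Consequently $f$ descends to a well-defined map $\gamma_0\colon \pi_p(U)\to Z$ with $\gamma_0\circ\pi_p=f$ on $U$ and $\supnorm{\gamma_0}=\supnorm{f}$.

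It then remains to extend $\gamma_0$ from the dense subset $\pi_p(U)$ to a bounded holomorphic function on $W$. The key estimate is that $\gamma_0$ is locally Lipschitz for the norm of $X_p$: by the Cauchy estimate a bounded holomorphic function is locally Lipschitz, and near $w_0=\pi_p(x_0)$ one obtains $\norm{f(x)-f(x')}\le L\,p(x-x')=L\,\norm{\pi_p(x)-\pi_p(x')}_{X_p}$ with $L$ of order $\supnorm{f}/\rho$, where $x_0+\rho B^p\subseteq U$. Hence $\gamma_0$ is locally uniformly continuous and extends uniquely to a continuous (indeed locally Lipschitz) map $\gamma\colon W\to Z$ with $\supnorm{\gamma}=\supnorm{f}$ and $\gamma\circ\pi_p=f$. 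Finally, to see that $\gamma$ is holomorphic I would verify that it is Gateaux-holomorphic: given $w_0\in W$ and $u\in X_p$, choose $\alpha_n,\beta_n\in X$ with $\pi_p(\alpha_n)\to w_0$ and $\pi_p(\beta_n)\to u$; then on a fixed small disc the holomorphic, uniformly bounded functions $t\mapsto \gamma(\pi_p(\alpha_n+t\beta_n))=f(\alpha_n+t\beta_n)$ converge locally uniformly to $t\mapsto \gamma(w_0+tu)$, which is therefore holomorphic. Since $\gamma$ is continuous and Gateaux-holomorphic on an open subset of the Banach space $X_p$, it is complex analytic in the sense of \cite{BS71b}, so $\gamma\in\bHol(W,Z)$ and $\Phi(\gamma)=f$.

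I expect the main obstacle to be the passage to the completion in this last step: $\gamma_0$ is only given on the non-complete dense image $\pi_p(U)\subseteq W$, and some care is needed both to produce the extension and to check that it is genuinely holomorphic rather than merely continuous. The Cauchy-estimate Lipschitz bound and the principle that a locally uniform limit of holomorphic maps is holomorphic are precisely what bridge this gap, while the fibre-constancy (Liouville) step is the conceptual heart that makes the descent possible in the first place.
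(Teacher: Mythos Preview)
Your argument is correct and complete in outline, but it follows a genuinely different route from the paper's proof. For surjectivity, the paper works locally around each $a\in K$: it expands $\eta\in\bHol(U,Z)$ in a power series $\sum_k\beta_k(x-a,\ldots,x-a)$, uses the one-variable Cauchy integral formula along rays to obtain $\norm{\beta_k(v,\ldots,v)}_Z\le\supnorm{\eta}\,p(v)^k$, and then invokes the polarization formula to conclude that each $\beta_k$ is $p$-continuous and hence factors through $X_p$. The extended power series then converges on $\oBallin{1}{X_p}{\pi_p(a)}$, and gluing over $a\in K$ yields $\gamma$. Your approach instead first proves fibre-constancy via Liouville (which is the $p(v)=0$ limiting case of the same Cauchy estimate), descends to $\gamma_0$ on $\pi_p(U)$, extends by a local Lipschitz bound, and finally recovers holomorphy as a locally uniform limit of the slice functions $t\mapsto f(\alpha_n+t\beta_n)$ together with the fact that continuous G\^ateaux-holomorphic maps on Banach spaces are analytic. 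The paper's method is more constructive---one reads off the Taylor expansion of $\gamma$ directly and never needs a separate holomorphy verification---at the price of citing the polarization formula. Your method avoids polarization and makes the descent step transparent, but pushes the analytic work into the extension/holomorphy step; your own caveat about the passage to the completion is well placed, and the local Lipschitz bound (with constant controlled by the distance to $\partial W$, hence locally uniform even at points of $W\setminus\pi_p(U)$) is exactly what makes that step go through.
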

\begin{proof}
 It is clear that $\pi_p(U)$ is dense in $W$. This implies that the map
 \[
  \nnfunc{\BHol{W}{Z} }{\BHol{U}{Z}}{f}{f\circ \pi_p,}
 \]
 is an isometric embedding. It remains to show the surjectivity. To this end,
let $\eta\in\BHol{U}{Z}$ be given. We will show that there is a $\gamma\in\BHol{W}{Z}$ such that $\eta=\gamma\circ\pi_p$.

 Let $a\in K$ be a point. Then $\eta$ admits a power series expansion around $a$, i.e.~for $x\in a+B^p$, we have:
 \[
  \eta(x)=\sum_{k=0}^\infty \beta_k(x-a,\ldots,x-a)
 \]
 with continuous symmetric $k$-linear maps $\smfunc{\beta_k}{X^k}{Z}$.

 Let $v\in X$ be a fixed vector and set $R:=\frac{1}{p(v)}\in\, ]0,+\infty]$. Then we define the following
function of one complex scalar variable:
 \[
  \func{h}{\oBallin{R}{\C}{0}}{Z}{z}{f(a+zv) = \sum_{k=0}^\infty \beta_k(v,\ldots,v) \cdot z^k  .}
 \]
 The coefficients of this series can be computed via Cauchy's integral formula:
 \[
  \beta_k(v,\ldots,v)= \frac{1}{2\pi i}\int_{\abs{z}=r} \frac{h(z)}{z^{k+1}} dz =  \frac{1}{2\pi i}\int_{\abs{z}=r} \frac{f(a+zv)}{z^{k+1}} dz.
 \]
 Applying the norm on both sides, we get the estimate:
\begin{eqnarray*}
\Znorm{\beta_k(v,\ldots,v)} &=& \left\| \frac{1}{2\pi i}\int_{\abs{z}=r} \frac{f(a+zv)}{z^{k+1}} dz \right\|_Z
\leq \frac{1}{2\pi}\cdot \frac{2\pi r \supnorm{f}}{r^{k+1}}\\
&=& \supnorm{f}\cdot \frac{1}{r^k}\leq \supnorm{f} (p(v))^k.
\end{eqnarray*}
With use of the polarization formula (e.g.~\cite[Theorem A]{MR0313810}), we obtain that each
$k$-linear map $\smfunc{\beta_k}{X^k}{Z} $ is continuous with respect to the seminorm $p$. This
implies that it factors
through the normed space $X/{p^{-1}(0)}$ to a continuous $k$-linear map and using the completeness
of the range space $Z$, we obain a continuous extension $\widetilde{\beta_k}$ to the Banach space $X_p$.

The power series $\sum_{k=0}^\infty \widetilde{\beta_k}$ so obtained converges on
the open ball $\oBallin{1}{X_p}{\pi_p(a)}$ to a complex analytic $Z$-valued function.

Now, we let the point $a\in K$ vary and obtain a complex analytic map on each
ball $\oBallin{1}{X_p}{\pi_p(a)}$. By construction, it is clear that the functions so obtained
agree on intersecting balls. Glueing together these functions, we get the function
 $   \smfunc{\gamma}{W}{Z} $
with the desired properties.
\end{proof}
The preceding lemma enables us to restrict our attention to the case where the domain
is a Banach space. This is useful because for functions defined on Banach spaces,
we have the following tool which can be found in \cite[Lemma 1.5]{dahmen2010}:
\begin{lem}[Absolute convergence of families of bounded power series]	\label{lem_absolute_family}
Let $K\subseteq X$ be a nonempty subset of a complex normed vector space $X$.
Let $W:=K+\oBallin{R}{X}{0}= \bigcup_{a\in K}\oBallin{R}{X}{a}$ be a union of
open balls with fixed radius \hbox{$R>0$}. Now, consider a set $M$ of bounded
complex analytic mappings from $W$ to a normed space $Z$ such that
$\sup_{\gamma\in M} \supnorm{\gamma}<\infty$.
Then we have for all $r<\frac{R}{2e}$ the following estimate:
\[
\sum_{k=0}^\infty \sup_{ \substack{\gamma\in M\\a\in K} } \frac{\opnorm{\gamma^{(k)}(a)}}{k!} r^k
 \leq \frac{R}{R-2er} \cdot \sup_{\gamma\in M} \supnorm{\gamma}.
\]
\end{lem}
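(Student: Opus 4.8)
The plan is to reduce the whole estimate to two standard ingredients: the one–variable Cauchy estimate applied along complex lines through each base point, and the polarization inequality that compares a homogeneous polynomial with its full symmetric multilinear form. The exponential base in the radius of convergence will enter \emph{only} through polarization.

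First I would abbreviate $C:=\sup_{\gamma\in M}\supnorm{\gamma}$, which is finite by hypothesis, and fix $\gamma\in M$ together with a point $a\in K$. Since $\oBallin{R}{X}{a}=a+\oBallin{R}{X}{0}\subseteq K+\oBallin{R}{X}{0}=W$, the restriction $\gamma|_{\oBallin{R}{X}{a}}$ is holomorphic with sup-norm at most $C$. For a unit vector $v\in X$ the map $z\mapsto\gamma(a+zv)$ is then holomorphic and bounded by $C$ on the disc $\abs{z}<R$, and exactly the Cauchy integral computation already carried out in the proof of Lemma~\ref{lem_Banach_factor}, applied on circles $\abs{z}=\rho$ with $\rho<R$ and then letting $\rho\to R$, gives
\[
\Znorm{\tfrac{1}{k!}\gamma^{(k)}(a)(v,\ldots,v)}\le \frac{C}{R^k}.
\]
By homogeneity the $k$-th diagonal term $\hat\beta_k(v):=\tfrac{1}{k!}\gamma^{(k)}(a)(v,\ldots,v)$ therefore satisfies $\sup_{\norm{v}\le 1}\Znorm{\hat\beta_k(v)}\le C/R^k$.

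The key (and only nontrivial) step is to pass from this bound on the diagonal to a bound on the operator norm of the full symmetric $k$-linear map $\beta_k:=\tfrac{1}{k!}\gamma^{(k)}(a)$. Here I would invoke the polarization inequality (\cite[Theorem~A]{MR0313810}), which yields
\[
\opnorm{\beta_k}\le \frac{k^k}{k!}\,\sup_{\norm{v}\le 1}\Znorm{\hat\beta_k(v)}\le \frac{k^k}{k!}\cdot\frac{C}{R^k}.
\]
Since $e^k=\sum_{j\ge 0}k^j/j!\ge k^k/k!$, this gives the uniform estimate $\opnorm{\gamma^{(k)}(a)}/k!=\opnorm{\beta_k}\le e^kC/R^k$, valid for \emph{every} $\gamma\in M$ and $a\in K$ because the right-hand side does not depend on them.

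Finally I would take the supremum over $\gamma$ and $a$ and sum the resulting geometric series: for $r$ with $2er<R$ (hence $er<R$),
\[
\sum_{k=0}^\infty\ \sup_{\substack{\gamma\in M\\ a\in K}}\frac{\opnorm{\gamma^{(k)}(a)}}{k!}\,r^k
\ \le\ C\sum_{k=0}^\infty\Bigl(\frac{er}{R}\Bigr)^k
\ =\ \frac{CR}{R-er}\ \le\ \frac{CR}{R-2er},
\]
which is the claimed bound (in fact marginally stronger). I expect the polarization step to be the one point requiring care: the Cauchy estimate only controls the \emph{diagonal} homogeneous part cheaply, and recovering the operator norm of the full symmetric derivative costs the factor $k^k/k!\le e^k$; this is precisely where the exponential base $e$ in the admissible radius originates, with the paper's use of $2e$ in place of $e$ serving only as a convenient margin.
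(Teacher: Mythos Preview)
Your argument is correct. The paper does not actually supply its own proof of this lemma; it simply quotes the result from \cite[Lemma~1.5]{dahmen2010}. Your route---one-variable Cauchy estimates along complex lines through each $a\in K$, followed by the polarization inequality $\opnorm{\beta_k}\le (k^k/k!)\sup_{\|v\|\le 1}\Znorm{\hat\beta_k(v)}$ and the bound $k^k/k!\le e^k$---is precisely the standard derivation, and indeed produces the sharper constant $R/(R-er)$ in place of $R/(R-2er)$.
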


\begin{lem}[Compact Regularity in the case that $M=X$]		\label{prop_GermCKZ_compact_regularity_special_case_X}
 Let $X$ be a metrizable complex locally convex vector space and let $K\subseteq X$
be a nonempty compact subset. Let $Z$ be a complex Banach space. Then the locally
convex direct limit
 \[
  \GermC{K}{Z}=\bigcup_{n\in\N} \bHol(U_n,Z)
 \]
is Hausdorff and compactly regular. Here $\seqn{U_n}$ is as in
Lemma \emph{\ref{lem_compact_union_and_countable_basis}\,(b)}.
\end{lem}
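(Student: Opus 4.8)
The plan is to reduce everything to the compact-regularity criterion of Lemma~\ref{lem_compact_regularity} and then to the case of a Banach domain. First I would observe that, once compact regularity (condition~(iii)) is established, the Hausdorff property comes for free from part~(iv) of that lemma; so it suffices to verify one of the equivalent conditions (i)/(ii). I would check the quantitative condition~(ii), since it is tailored to the coefficient estimates available to us. Throughout I use the fixed basis $(U_n)_{n\in\N}$ of neighbourhoods of $K$ from Lemma~\ref{lem_compact_union_and_countable_basis}\,(b) and the Banach spaces $E_n:=\bHol(U_n,Z)$ with their sup-norms $\|\cdot\|_{U_n}$; recall that the bonding maps are the (injective) restriction maps, so that a germ lying in $\cBallin{1}{E_n}{0}\cap \cBallin{\delta}{E_\ell}{0}$ is represented by a \emph{single} $\gamma\in\bHol(U_n,Z)$ with $\|\gamma\|_{U_n}\le 1$ and $\|\gamma\|_{U_\ell}\le\delta$, the coincidence of representatives being guaranteed by the Identity Theorem (Lemma~\ref{idthm}).

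Next I would pass to the Banach setting via the factorization Lemma~\ref{lem_Banach_factor}. Choosing an increasing sequence $q_1\le q_2\le\cdots$ of continuous seminorms defining the metrizable topology of $X$ so that the sets $K+B^{q_n}$ form a neighbourhood basis of $K$, each $\bHol(K+B^{q_n},Z)$ is isometrically isomorphic to $\bHol(\pi_{q_n}(K)+\oBallin{1}{X_{q_n}}{0},Z)$, with $X_{q_n}$ a Banach space and $\pi_{q_n}(K)$ compact. In this way the directed system is replaced by one whose members have domains sitting inside Banach spaces, and the verification of~(ii) may be carried out with the Banach-space tool Lemma~\ref{lem_absolute_family}.

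In the Banach model (say $U_n=K+\oBallin{R_n}{X}{0}$ with $R_1>R_2>\cdots\downarrow 0$) I would verify~(ii) by a head-and-tail splitting of the Taylor expansion. Given $n$, choose $m$ with $R_m<R_n/(2e)$, so that Lemma~\ref{lem_absolute_family}, applied to the \emph{whole} ball $\mathcal M:=\cBallin{1}{E_n}{0}$ with $r=R_m$, bounds the numbers $T_k:=\sup_{\gamma\in\mathcal M,\,a\in K}\tfrac{\|\gamma^{(k)}(a)\|_{\op}}{k!}R_m^{\,k}$ by $\sum_{k\ge 0}T_k\le R_n/(R_n-2eR_m)<\infty$. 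The crucial feature is that this holds uniformly over $\mathcal M$: given $\epsilon>0$ there is a cutoff $N$, depending only on $n,m,\epsilon$, with $\sum_{k>N}T_k<\epsilon/2$, so for every $\gamma\in\mathcal M$ and every $x=a+v\in U_m$ the tail of $\gamma(a+v)=\sum_k \tfrac{\gamma^{(k)}(a)}{k!}(v,\dots,v)$ beyond $N$ is $<\epsilon/2$. For the finitely many head terms $k\le N$ I would use the Cauchy estimate from $\|\gamma\|_{U_\ell}\le\delta$, namely $\tfrac{\|\gamma^{(k)}(a)\|_{\op}}{k!}\le \delta/R_\ell^{\,k}$ for $a\in K$, whence each head term is at most $\delta(R_m/R_\ell)^k$; choosing $\delta$ small (depending on $\epsilon,\ell$ and the already-fixed $N$) makes the head $<\epsilon/2$. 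Combining, $\|\gamma\|_{U_m}\le\epsilon$, which is exactly~(ii); the trivial case $\ell\le m$ is dispatched by $\delta:=\epsilon$, since then $U_m\subseteq U_\ell$.

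The main obstacle I anticipate is the bookkeeping in the reduction step: under the factorization the smaller neighbourhoods $U_\ell$ live over Banach spaces $X_{q_\ell}$ with seminorms that are not comparable to those at level $n$, so the images $\pi_{q_n}(U_\ell)$ need not be balls, and one must take care that the Cauchy estimates on the small neighbourhood still transfer to uniform control of the head terms on the intermediate neighbourhood. The secondary delicate point, already visible in the Banach model, is that the cutoff $N$ must be chosen independently of the individual germ $\gamma$; this is exactly why I apply Lemma~\ref{lem_absolute_family} to the entire ball $\mathcal M$ rather than to a single function, so that the uniform tail bound precedes, and is independent of, the subsequent choice of $\delta$.
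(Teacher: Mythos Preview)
Your overall plan coincides with the paper's: verify condition~(ii) of Lemma~\ref{lem_compact_regularity} by a head--tail split of the Taylor expansion, using Lemma~\ref{lem_absolute_family} applied to the whole unit ball $\cBallin{1}{E_n}{0}$ for a uniform tail bound, and Cauchy estimates coming from $\|\gamma\|_{U_\ell}\le\delta$ for the head. Your Banach-model sketch (a single normed $X$ with shrinking radii $R_n$) is correct and is essentially how the paper organizes the estimates.

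The gap is precisely the one you flag, and it is the heart of the proof: after factorization the levels $n,m,\ell$ live over \emph{different} Banach spaces $X_{q_n},X_{q_m},X_{q_\ell}$, and without further control the operator norms of the bonding maps between them carry no useful bound, so neither your tail estimate (which needs $\|v\|$ small in the $X_n$-norm when $v$ comes from level $m$) nor your head estimate (which needs the $X_\ell$-Cauchy bound to say something about values on $U_m$) transfers. The paper resolves this by a specific normalization of the seminorms before factorizing: fix once and for all $r\in(0,\tfrac{1}{2e})$ and replace a generating sequence $(p_n)$ by $\widetilde p_n:=r^{-n}\sum_{k\le n}p_k$, so that $\widetilde p_n\le r\,\widetilde p_{n+1}$ for every $n$. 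With $U_n:=K+B^{\widetilde p_n}$ one still has a neighbourhood basis, and now the linking maps $\eta_{n,n+1}\colon X_{n+1}\to X_n$ have operator norm $\le r$. One may then take $m:=n+1$ uniformly: a point $x_{n+1}\in W_{n+1}$ writes as $a_{n+1}+v_{n+1}$ with $\|v_{n+1}\|_{X_{n+1}}<1$, hence $v_n:=\eta_{n,n+1}(v_{n+1})$ has $\|v_n\|_{X_n}<r$, which is exactly the radius needed to feed into Lemma~\ref{lem_absolute_family} at level $n$ for the tail. For the head one uses the chain-rule identity $\gamma_n^{(k)}(a_n)(v_n,\dots,v_n)=\gamma_\ell^{(k)}(a_\ell)(v_\ell,\dots,v_\ell)$ (valid for any lift $a_\ell,v_\ell$ over $a_n,v_n$) together with a second application of Lemma~\ref{lem_absolute_family}, this time to the single function $\gamma_\ell$ on $X_\ell$, to bound $\sum_{k\le k_0}\tfrac{\|\gamma_\ell^{(k)}(a_\ell)\|_{\op}}{k!}r^k$ by $\tfrac{\delta}{1-2er}$; the factor $r^{-k_0}$ arising from $r^k\cdot r^{-k_0}\ge 1$ for $k\le k_0$ is then absorbed into the choice $\delta:=(1-2er)r^{k_0}\,\tfrac{\epsilon}{2}$. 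Without this rescaling trick your proposal does not go through in the non-Banach case.
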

\begin{proof}
 For the whole proof, we fix a real number $r>0$ which is strictly less
than $\frac{1}{2e}$, where $e$ denotes Euler's number.

Since the space $X$ is metrizable and locally convex, its topology is
generated by a sequence of seminorms $\seqn{p_n}$. Replacing each $p_n$ by the seminorm
\[
 \widetilde{p_n}:= \frac{1}{r^n} \sum_{k\leq n} p_k,
\]
we obtain a new sequence of seminorms, still generating the same topology, but with the
additional property that $\widetilde{p_n}\leq r \cdot \widetilde{p_{n+1}}$. To simplify notation,
we call this new sequence of seminorms again $\seqn{p_n}$.
 Let $\oBallin{\epsilon}{p_k}{0}$ be an open ball around zero with respect to the seminorm $p_k$.
Since $r<1$,  there is a number $n\geq k$ such that $r^{n-k}<\epsilon$ and therefore,
the given ball $\oBallin{\epsilon}{p_k}{0}$ contains the unit ball $B^{p_n}= \oBallin{1}{p_n}{0}$.
This shows that the sequence of unit balls $\seqn{B^{p_n}}$ is a basis of $0$-neighborhoods in $X$.

 From now on, we will use this basis of $0$-neighborhoods to construct a basis of
open neighborhoods of the compact set $K$ by setting
 \[
  U_n:= K + B^{p_n}.
 \]
 By construction, each component of $U_n$ intersects $K$ non-trivially,
so $\seqn{U_n}$ is a sequence of the form in Lemma \ref{lem_compact_union_and_countable_basis}(b).

 In analogy to the notation in Lemma \ref{lem_Banach_factor}, we let $X_n$ denote the Banach space
obtained by completing the normed space $(X/p_n^{-1}(0),p_n)$.
 The unit ball in $X_n$ is denoted by $B^n$, the canoncal map by $\smfunc{\pi_n}{X}{X_n}$
and the bonding maps by $\smfunc{\eta_{k_1,k_2}}{X_{k_2}}{X_{k_1}}$:
 \[
  \xymatrix{ 			& &	X \ar[lld]_{\pi_1} \ar[d]^{\pi_2} \ar[drr]^{\pi_3} \ar[drrrr]^{\pi_4}	& &				& &				\\
		      X_1	& &	X_2 \ar[ll]_{\eta_{1,2}}					& &	X_3 \ar[ll]_{\eta_{2,3}}	& & \cdots \ar[ll]_{\eta_{3,4}} }
 \]
 By construction of the spaces $X_n$, it is clear that the maps $\smfunc{\eta_{n,n+1}}{X_{n+1}}{X_n}$
are continuous linear with operator norm $\opnorm{\eta_{n,n+1}}\leq r$ and with dense image.
 
 Let $K_n:=\pi_n(K)$ be the compact subset of $X_n$, and $W_n:=K_n+B^n$ be the corresponding open
neighborhood of $K_n$ in the Banach space $X_n$.

 We want to show that the direct limit
 \[
  \BHol{U_1}{Z} \to \BHol{U_2}{Z} \to \cdots
 \]
 is compactly regular. By Lemma \ref{lem_Banach_factor}, each $\gamma\in\BHol{U_n}{Z}$ factors through $\pi_n$
and we may identify the Banach spaces $\BHol{U_n}{Z} \cong \BHol{W_n}{Z}$.
It remains to show that the direct limit
 \[
  E_1:=\BHol{W_1}{Z} \to E_2:=\BHol{W_2}{Z} \to \cdots
 \]
 is compactly regular. We denote the bonding maps by
 \[
  \func{\iota_{k_1,k_2} }{E_{k_1}}{E_{k_2}}{\gamma}{\gamma\circ\eta_{k_1,k_2}}.
 \]
 We will use (ii) in the characterisation in Lemma \ref{lem_compact_regularity}, i.e.~let $n\in\N$ be given and set $m:=n+1$.
Let $\ell\geq n+1$ and $\epsilon>0$ be given. It remains to show that there is a number $\delta>0$ such that
 \[
  \cBallin{1}{E_n}{0} \cap \cBallin{\delta}{E_\ell}{0}\subseteq \cBallin{\epsilon}{E_m}{0}. 
 \]
 We may apply Lemma \ref{lem_absolute_family} to the subset $K_n$ of the Banach space $X_n$,
the set $M:=\cBallin{1}{E_n}{0}$, the number $R:=1$ and $r>0$ as already defined at the beginning of this proof.
 Then we may conclude the convergence of the series
 \[
  \sum_{k=0}^\infty s_k r^k \quad\hbox{ with }\quad s_k:=\sup_{\substack{\gamma\in \cBallin{1}{E_n}{0} \\ a\in K_n}} \frac{\opnorm{\gamma^{(k)} (a)  }}{k!} .
 \]
 Since this sum is convergent, there is an index $k_0\in\N$ such that
 $
  \sum_{k>k_0} s_k r^k \leq \frac{\epsilon}{2}.
 $
 Now, we are able to define the desired number $\delta$ as:
 \[
  \delta := (1-2er) r^{k_0}\cdot \frac{\epsilon}{2}.
 \]
 It remains to show that $   \cBallin{1}{E_n}{0} \cap \cBallin{\delta}{E_\ell}{0}\subseteq \cBallin{\epsilon}{E_m}{0}$.
 To this end, let $\cl{\gamma}\in \cBallin{1}{E_n}{0} \cap \cBallin{\delta}{E_\ell}{0}$ be given.
 Since $\cl{\gamma}\in \cBallin{1}{E_n}{0}$, we can view $\cl{\gamma}$ as an element
$\gamma_n\in E_n=\BHol{W_n}{Z}$ with $\norm{\gamma_n}_{E_n}\leq 1$.
 By $\cl{\gamma}\in\cBallin{\delta}{E_\ell}{0}$, we know that $\gamma_\ell:=\iota_{n,\ell}(\gamma_n)$
is an analytic function on $W_\ell$ and $\norm{\gamma_\ell}_{E_\ell}<\delta$. We want to show that
$\norm{\gamma_{n+1}}_{E_{n+1}}=\sup_{x\in W_{n+1}}\norm{\gamma_{n+1}(x) }_Z  \leq\epsilon$. To this end,
let $x_{n+1}\in W_{n+1}$ be given. It remains to show that $\norm{\gamma_{n+1}(x_{n+1})}_Z\leq \epsilon$.

 Since $\eta_{n+1,\ell}(W_{\ell})$ is dense in $W_{n+1}$, we may assume that
$x_{n+1}=\eta_{n+1,\ell}(x_\ell)$ for an $x_\ell\in W_\ell=K_\ell+B^\ell$, i.e.~the element $x_\ell$ can be written as
 \[
  x_\ell = a_\ell + v_\ell \hbox{ with } a_\ell\in K_\ell \hbox{ and } \norm{v_\ell}_{X_\ell}<1.
 \]
 Now, we can estimate the value of $\gamma_{n+1}(x_{n+1})$:
 \begin{align*}
  \Znorm{\gamma_{n+1}(x_{n+1})}	  &	=	\Znorm{\gamma_n(x_n)}
				\\&	\leq	\sum_{k\leq k_0} \Znorm{	\frac{\gamma_n^{(k)}(a_n)}{k!}(v_n,\ldots, v_n) 	}
					      +	\sum_{k >  k_0} \Znorm{	\frac{\gamma_n^{(k)}(a_n)}{k!}(v_n,\ldots, v_n) 	}	.			\tag{$*$} \label{eqn_two_sums}
 \end{align*}
 The first sum of (\ref{eqn_two_sums}) can be estimated by:
 \begin{align*}
  \sum_{k\leq k_0} \Znorm{	\frac{\gamma_n^{(k)}(a_n)}{k!}(v_n,\ldots, v_n) 	}
				  &	=	\sum_{k\leq k_0} \Znorm{	\frac{\gamma_\ell^{(k)}(a_\ell)}{k!}(v_\ell,\ldots, v_\ell) 	}
				\\&	\leq	\sum_{k\leq k_0} \frac{\opnorm{\gamma_\ell^{(k)}(a_\ell) }}{k!}\underbrace{\norm{v_\ell}_{E_\ell}^k}_{\leq1} \cdot r^{k}\cdot r^{-k_0}
				\\&	\leq	\left(\sum_{k=0}^\infty  \frac{\opnorm{\gamma_\ell^{(k)}(a_\ell) }}{k!} \cdot r^{k} \right)\cdot r^{-k_0}
 \end{align*}
 If we apply Lemma \ref{lem_absolute_family} to the Banach space $X_{\ell}$, the one element family $M=\set{\gamma_\ell}$
and parameters $R=1$ and $r>0$ as above, we obtain that this sum can be bounded above by
 \[
  \frac{1}{1-2er}\underbrace{\supnorm{\gamma_\ell}}_{\leq\delta} \cdot r^{-k_0} = \frac{1}{1-2er}\cdot \delta \cdot r^{-k_0} = \frac{\epsilon}{2}
 \]
 by the choice of $\delta$.
 
 The second sum of (\ref{eqn_two_sums}) is equal to:
 \begin{align*}
  \sum_{k >  k_0} \Znorm{	\frac{\gamma_n^{(k)}(a_n)}{k!}(v_n,\ldots, v_n) 	}
				  &	\leq	\sum_{k>k_0} \underbrace{  \frac{\opnorm{\gamma_n^{(k)} (a_n) }}{k!}  }_{\leq s_k} \left(\norm{\eta_{n,n+1} (v_{n+1}) }_{X_n}\right)^k
				\\&	\leq	\sum_{k>k_0} s_k  \Bigl(
									\underbrace{	 
										      \opnorm{\eta_{n,n+1}}	
										  }_{\leq r}
									\cdot \underbrace{
											   \norm{  v_{n+1}    }_{X_{n+1}}
											 }_{<1}
								  \Bigr)^k
				\\&	\leq	\sum_{k>k_0} s_k r^k
				   	\leq	\frac{\epsilon}{2}.
 \end{align*}
 This finishes the proof.
\end{proof}

 Let us now return to the general case of $K$ being a compact subset of a manifold $M$ which is
modeled on the metrizable locally convex space $X$. We will use the following easy tools
from the theory of locally convex direct limits:
\begin{lem}[Products and subspaces of compact regular direct limits]	\label{lem_compact_regularity_products_and_subspaces}
$\;$\\[-4mm]
 \begin{itemize}
  \item [\rm(a)]
  Let $E=\bigcup_n E_n$ and $F=\bigcup_n F_n$ be compactly regular direct limits
  of Banach spaces $\seqn{E_n}$ and $\seqn{F_n}$ respectively. Then the product $E\times F$ can
  be written as the compactly regular direct limit
		\[
		  E\times F = \bigcup_n (E_n \times F_n).
		\]
  \item [\rm(b)] Consider two sequences of locally convex spaces $\seqn{E_n}$ and $\seqn{F_n}$ respectively,
  together with topological embeddings $\smfunc{\tau_n}{E_n}{F_n}$ such that the following diagram commutes:
                \[
                 \xymatrix{
			      E_1 \ar[r] \ar@{^{(}->}[d]^{\tau_1} 	& E_2 \ar[r]\ar@{^{(}->}[d]^{\tau_2}	& E_3 \ar@{^{(}->}[d]^{\tau_3} \ar[r]	&	\cdots	\\
			      F_1 \ar[r]        	 		& F_2 \ar[r]       			& F_3 \ar[r]  				&	\cdots	
			  }
                \]
	      Assume that $E_m\cap \tau^{-1}_n(F_n) \subseteq E_n$ for all $m,n\in \N$ and that the sequence
              $\seqn{F_n}$ is compactly regular. Then $\seqn{E_n}$ is compactly regular as well.
	      $($Note that we do not state that $\bigcup_n E_n$ is a topological subspace of $\bigcup_n F_n$
              as this will not be true in general.$)$
 \end{itemize}
\end{lem}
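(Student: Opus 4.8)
My plan is to treat the two parts separately, since (a) is a permanence property of compact regularity under finite products, whereas (b) is a transfer result along an embedding of inductive systems.

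For part (a) I would first match the two topologies. The inclusions $E_n\times F_n\hookrightarrow E\times F$ are continuous (being products of the continuous inclusions $E_n\hookrightarrow E$ and $F_n\hookrightarrow F$), so the locally convex direct limit topology on $\bigcup_n(E_n\times F_n)$ is finer than the product topology. For the reverse inclusion I would argue at the level of continuous seminorms: a seminorm $q$ continuous on the direct limit restricts to a continuous seminorm on each Banach space $E_n\times F_n$, and the estimate $q(x,y)\le q(x,0)+q(0,y)$ shows that $\alpha(x):=q(x,0)$ and $\beta(y):=q(0,y)$ define seminorms on $E$ and $F$ whose restrictions to each $E_n$, resp.\ $F_n$, are continuous; by the universal property of the direct limits, $\alpha$ and $\beta$ are then continuous on $E$ and $F$, and $q\le\alpha\circ\pr_E+\beta\circ\pr_F$ is continuous on the product. (Equivalently, one may invoke that a finite product is a finite coproduct in the category of locally convex spaces, and that coproducts commute with inductive limits.) Having matched the topologies, compact regularity is the main point: given a compact $C\subseteq E\times F$, the projections $\pr_E(C)$ and $\pr_F(C)$ are compact, hence by compact regularity of $E$ and $F$ (property (iii) of Lemma~\ref{lem_compact_regularity}) compact in some $E_n$ and some $F_m$; for $N:=\max(n,m)$ both are compact in $E_N$ and $F_N$, using continuity of the bonding inclusions, so $C\subseteq\pr_E(C)\times\pr_F(C)$ lies in a compact subset of $E_N\times F_N$. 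Since $E\times F$ is Hausdorff (Lemma~\ref{lem_compact_regularity}, (iv)), $C$ is closed in $E\times F$, hence closed in $E_N\times F_N$ by continuity of the inclusion; a closed subset of a compact set is compact, so $C$ is compact in $E_N\times F_N$.

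For part (b) I would identify every space with a subspace of $F$ via the injective continuous limit map $\tau\colon E\to F$ induced by the embeddings $\tau_n$, reading the hypothesis as: any $x\in E_m$ with $\tau(x)\in F_n$ already lies in $E_n$ (i.e.\ $E_m\cap\tau^{-1}(F_n)\subseteq E_n$). The strategy is push--capture--pull. Given a compact $C\subseteq E$, continuity of $\tau$ makes $\tau(C)$ compact in $F$, so compact regularity of $(F_n)_{n\in\N}$ yields an index $n_0$ with $\tau(C)$ compact in $F_{n_0}$. The capture step uses the hypothesis: each $c\in C$ lies in some $E_m$ and has $\tau(c)\in F_{n_0}$, whence $c\in E_m\cap\tau^{-1}(F_{n_0})\subseteq E_{n_0}$; thus $C\subseteq E_{n_0}$. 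Finally, since $\tau_{n_0}\colon E_{n_0}\to F_{n_0}$ is a topological embedding and $\tau|_{E_{n_0}}=\tau_{n_0}$, the compact set $\tau(C)$ lies in the subspace $\tau_{n_0}(E_{n_0})$ of $F_{n_0}$ and is therefore compact in that subspace; pulling back through the homeomorphism $\tau_{n_0}$ shows that $C$ is compact in $E_{n_0}$, which is exactly compact regularity of $(E_n)_{n\in\N}$.

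The main obstacle, and the only place where the technical hypothesis is genuinely needed, is the capture step of part (b): without the condition $E_m\cap\tau^{-1}(F_n)\subseteq E_n$ one could locate $\tau(C)$ at a finite level of $(F_n)$ but would have no control on the level of $C$ in $(E_n)$, and (as the parenthetical warning in the statement signals) $E$ need not embed topologically into $F$, so no softer argument is available. In part (a), the point worth double-checking is the seminorm-domination argument identifying the product topology with the direct-limit-of-products topology, but this is routine once set up.
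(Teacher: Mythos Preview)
Your proof is correct and follows essentially the same route as the paper's. For (a), the paper identifies the two topologies by the one-line observation that a finite product is a finite coproduct and final locally convex topologies are transitive (which you mention as the alternative to your explicit seminorm argument), and then handles compact regularity exactly as you do via projections and the inclusion $C\subseteq \pr_E(C)\times\pr_F(C)$; for (b), the paper's argument is the same push--capture--pull as yours, and your reading of the hypothesis via the limit map $\tau$ is precisely how the paper uses it.
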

\begin{proof}
 (a):\\
 Since the product of two locally convex vector spaces agrees with the direct
sum whose topology is a final locally convex topology
(as are locally convex direct limit topologies),
we may use the well-known transitivity of
final locally convex topologies and
obtain that the product topology on $E\times F$ agrees
with the locally convex direct limit topology:
 \[
  E\times F = \bigcup_n (E_n \times F_n).
 \]
%
Let $C\subseteq E\times F$ be a compact set. Then the projections on the two factors $E$ and $F$
yield compact subsets $C_E$ and $C_F$ respectively. By compact regularity of $E$ and $F$, we find
a number $n\in\N$ such that $C_E$ is a compact subset of $E_n$ and $C_F$ is a compact subset of $F_n$.
This implies that the product $C_E\times C_F$ is a compact subset of $E_n\times F_n$, and
since $C\subseteq C_E\times C_F$, the claim follows.

 (b):\\
 Since every $\smfunc{\tau_n}{E_n}{F_n}$ is continuous linear, we obtain a continuous
linear map $\smfunc{\tau}{E}{F}$ by the universal property of the direct limit.
The compact set $C\subseteq E$ is then mapped onto the compact set $\tau(C)\subseteq F$
which is a compact subset of $F_n$ for a number $n\in\N$ by the compact
regularity of $\seqn{F_n}$. Let $x\in C\subseteq E$ be given. Then there exists a
number $m\in\N$ such that $x\in E_m$. By the hypothesis, this implies that $x\in E_n$.
Since $x\in C$ was arbitrary, this implies that $C$ is a subset of $E_n$. Using that
$\smfunc{\tau_n}{E_n}{F_n}$ is a topological embedding, $C$ is compact in $E_n$
since $\tau_n(C)$ is compact in $F_n$.
\end{proof}
Now, we are able to show the main result of this section:
\begin{prop}[Compact Regularity of $\GermC{K}{Z}$]		\label{prop_GermCKZ_compact_regularity}
 Let $M$ be a manifold modeled on the metrizable locally convex space $X$. Let $Z$ be a complex Banach space.
 Then for each compact subset $K\subseteq M$ the space $\GermC{K}{Z}$ is the compact regular direct limit
of the spaces $(\bHol(U_n,Z))_{n\in \N}$, where $\seqn{U_n}$ is any sequence of open neighborhoods
as in part \emph{(b)} of Lemma \emph{\ref{lem_compact_union_and_countable_basis}}.
 In particular, the locally convex space $\GermC{K}{Z}$ is Hausdorff and complete.
\end{prop}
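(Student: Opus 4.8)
The plan is to reduce the manifold situation to the already-settled vector-space case of Lemma~\ref{prop_GermCKZ_compact_regularity_special_case_X} by covering $K$ with finitely many charts and realizing $\GermC{K}{Z}$ as a ``subspace'' of a finite product of germ spaces over the pieces. First, using Lemma~\ref{lem_compact_union_and_countable_basis}\,(a) I write $K=K_1\cup\cdots\cup K_s$ with each $K_j$ contained in a chart domain $C_j$, with chart $\varphi_j\colon C_j\to\varphi_j(C_j)\subseteq X$. Since composition with $\varphi_j$ is a complex analytic diffeomorphism preserving boundedness, the transport map identifies $\GermC{K_j}{Z}$ with $\GermC{\varphi_j(K_j)}{Z}$, the germ space of a compact subset of the metrizable space $X$. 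Hence Lemma~\ref{prop_GermCKZ_compact_regularity_special_case_X} applies and each $\GermC{K_j}{Z}$ is a Hausdorff, compactly regular (LB)-space. By Lemma~\ref{lem_compact_regularity_products_and_subspaces}\,(a), the finite product $F:=\prod_{j=1}^s\GermC{K_j}{Z}=\bigcup_n F_n$ with $F_n:=\prod_j\bHol(U_n^{(j)},Z)$ is again compactly regular, once decreasing neighbourhood bases $U_1^{(j)}\supseteq U_2^{(j)}\supseteq\cdots$ of the $K_j$ are fixed.

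Next I set up the embedding. Following the construction in Lemma~\ref{lem_compact_union_and_countable_basis}\,(b), I choose the $U_n^{(j)}$ so that each connected component of each $U_n^{(j)}$ meets $K_j$, and I take $U_n$ to be the union of those components of $\bigcup_j U_n^{(j)}$ which meet $K$; then every component of $U_n^{(j)}$ lies in a component of $\bigcup_l U_n^{(l)}$ meeting $K$, so $U_n^{(j)}\subseteq U_n$ and hence $U_n=\bigcup_j U_n^{(j)}$. Restriction therefore yields, at each level $n$, a linear map
\[
\tau_n\colon E_n:=\bHol(U_n,Z)\longrightarrow F_n,\qquad \gamma\longmapsto\big(\gamma|_{U_n^{(j)}}\big)_{j},
\]
and, computing the sup-norm of $\gamma$ over $U_n=\bigcup_j U_n^{(j)}$ as the maximum of the sup-norms over the $U_n^{(j)}$, one sees that $\tau_n$ is an isometric (hence topological) embedding when $F_n$ carries the maximum norm. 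These maps commute with the bonding maps, so Lemma~\ref{lem_compact_regularity_products_and_subspaces}\,(b) reduces the whole proposition to verifying the compatibility condition $E_m\cap\tau^{-1}(F_n)\subseteq E_n$ for all $m,n$.

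For $m\le n$ this is automatic, since $E_m\subseteq E_n$ inside $\GermC{K}{Z}$ (restrict to the smaller neighbourhood $U_n\subseteq U_m$). The content lies in the case $m>n$: given $\gamma\in\bHol(U_m,Z)$ whose germ $[\gamma]_{K_j}$ around each $K_j$ admits a bounded holomorphic representative $\gamma_j\in\bHol(U_n^{(j)},Z)$, I must produce a bounded holomorphic representative of $[\gamma]$ on the \emph{larger} neighbourhood $U_n$. The natural candidate is the function equal to $\gamma_j$ on each $U_n^{(j)}$, and the main obstacle is precisely its well-definedness, i.e.\ the agreement $\gamma_j=\gamma_k$ on the overlaps $U_n^{(j)}\cap U_n^{(k)}$. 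Since $\gamma_j$ and $\gamma_k$ both coincide with $\gamma$ on a neighbourhood of $K_j\cap K_k$, the Identity Theorem (Lemma~\ref{idthm}) forces them to agree on every connected component of the overlap meeting $K_j\cap K_k$; the danger is a ``rogue'' component of $U_n^{(j)}\cap U_n^{(k)}$ disjoint from $K_j\cap K_k$, on which they might differ. I plan to exclude this by a further careful choice of bases: when $K_j\cap K_k=\emptyset$ I take the $U_n^{(j)}$ small enough to lie in a fixed disjoint open neighbourhood of $K_j$ (available since $K_j,K_k$ are disjoint compacta in the Hausdorff space $M$), so the overlap is empty; when $K_j\cap K_k\neq\emptyset$ I shrink the bases to metric neighbourhoods of controlled radius within the charts so that every component of $U_n^{(j)}\cap U_n^{(k)}$ meets $K_j\cap K_k$. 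Granting this, the $\gamma_j$ glue to a single $\hat\gamma\in\bHol(U_n,Z)$ with $\hat\gamma=\gamma$ near $K$, whence $[\gamma]=[\hat\gamma]\in E_n$.

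With the compatibility condition established, Lemma~\ref{lem_compact_regularity_products_and_subspaces}\,(b) gives that $\seqn{E_n}$ is compactly regular, i.e.\ $\GermC{K}{Z}$ is the compactly regular direct limit of the $\bHol(U_n,Z)$; as any two admissible bases are mutually cofinal, this is independent of the chosen sequence by Proposition~\ref{prop_GermKZ}. Finally, the implication (iii)$\,\Rightarrow\,$(iv) of Lemma~\ref{lem_compact_regularity} yields that $\GermC{K}{Z}$ is Hausdorff and complete. I expect the overlap-gluing step to be the only genuinely delicate point, the remainder being bookkeeping with the already-established product and subspace stability of compact regularity.
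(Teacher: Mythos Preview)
Your overall strategy coincides with the paper's: embed $E_n=\bHol(U_n,Z)$ into the product $F_n=\prod_j\bHol(U_n^{(j)},Z)$ via restriction and then invoke Lemma~\ref{lem_compact_regularity_products_and_subspaces}\,(b). The only structural difference is that the paper argues by induction, treating two pieces $K',K''$ at a time, whereas you handle all $s$ pieces simultaneously. You are also right that the gluing step --- showing that the separate extensions $\gamma_j$ agree on the overlaps $U_n^{(j)}\cap U_n^{(k)}$ --- is the crux, and you have isolated exactly the obstruction the paper passes over in one line: a connected component of the overlap that misses $K_j\cap K_k$.

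Your proposed remedy, however, does not work. The claim that by taking metric $\varepsilon$-neighbourhoods one can force every component of $U_n^{(j)}\cap U_n^{(k)}$ to meet $K_j\cap K_k$ is false in general. For a concrete obstruction in $\C\cong\R^2$, take
\[
K_j=\{(0,0)\}\cup\{(\tfrac{1}{n},0):n\ge1\},\qquad K_k=\{(0,0)\}\cup\{(\tfrac{1}{n},\tfrac{1}{n^3}):n\ge1\},
\]
so that $K_j\cap K_k=\{(0,0)\}$. For every $\varepsilon>0$ there exist integers $n$ with $1/n^3<2\varepsilon<1/(n(n+1))$; for each such $n$ the $\varepsilon$-ball around $(1/n,0)$ is disconnected from the rest of the $\varepsilon$-neighbourhood of $K_j$ yet meets the $\varepsilon$-ball around $(1/n,1/n^3)$, producing a component of the overlap disjoint from $(0,0)$. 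Since this happens for \emph{arbitrarily small} $\varepsilon$, no ``controlled radius'' eliminates the rogue components, and more general neighbourhood bases face the same obstruction. Thus the argument as written has a genuine gap at precisely the point you flagged. (The paper's two-piece version invokes the Identity Theorem at the corresponding step without addressing the component structure of $U'_n\cap U''_n$ either, so you have correctly located a delicate spot; but your plan does not close it.)
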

\begin{proof}
 In the case that $K$ is so small that it is contained in one chart, we may assume
that $K\subseteq X$ and hence $\GermC{K}{Z}$ is compactly regular by
Proposition \ref{prop_GermCKZ_compact_regularity_special_case_X}. Consider the following

 Claim: Let $K',K''\subseteq M$ be compact subsets such that $\GermC{K'}{Z}$ and $\GermC{K''}{Z}$
are compactly regular. Then $\GermC{K'\cup K''}{Z}$ is compactly regular as well.

 If this claim is true, then the proposition follows, as each compact set $K$ can be written
as a finite union of compact sets each of which is contained in one chart by part (a)
of Lemma \ref{lem_compact_union_and_countable_basis}.

 It remains to show the claim:
 To this end, let $K'$ and $K''$ be two compact subsets with compactly regular direct limits
$\GermC{K'}{Z} $ and $\GermC{K''}{Z}$, respectively. By part (b) of
Lemma \ref{lem_compact_union_and_countable_basis}, we obtain open neighborhoods
$\seqn{U'_n}$ of $K_1$ and $\seqn{U''_n}$ of $K_2$ respectively. Let us denote
$F_n':= \BHol{U_n'}{Z}$ and $F_n'':=\BHol{U_n''}{Z}$ and $E_n:=\BHol{U_n'\cup U_n''}{Z}$.
The space $E_n$ can be embedded in the product $F_n'\times F_n''$ via
 \[
  \func{\tau_n}{E_n}{F_n'\times F_n''}{\gamma}{\left( \gamma|_{U_n'} , \gamma|_{U_n''} \right)}.
 \]
The direct limit $ \bigcup_n \left( F_n'\times F_n'' \right)$ is compactly regular by
\ref{lem_compact_regularity_products_and_subspaces}\,(a).
 There are two cases to consider: If $K'\cap K''=\emptyset$, then we may assume
that for each $n\in\N$, the two open sets, $U_n'$ and $U_n''$, are disjoint.
Therefore the mappings $\tau_n$ defined above are isomorphisms.
This shows that $\bigcup_n E_n\cong \bigcup_n (F_n'\times F_n'')$ is compactly regular.

Let us assume now that $K'\cap K''\neq \emptyset$. Let $m,n\in\N$ be given.
If we are able to show that $E_m\cap \tau^{-1}(F_n)\subseteq E_n$, the compact regularity
of $\bigcup_n E_n$ follows by part (b) of Lemma \ref{lem_compact_regularity_products_and_subspaces}.
 To this end, let $\gamma\in E_m$, i.e.~a function $\smfunc{\gamma}{U_m'\cup U_m''}{Z}$
be given and assume that $\tau(\gamma) = (\gamma|_{U'},\gamma|_{U''})\in\left( F_n'\times F_n'' \right)$.
This means that the function $\gamma$ can be extended holomorphically to a function
$\smfunc{\gamma'}{U_n'}{Z}$ and to a function $\smfunc{\gamma''}{U_n''}{Z}$.
The domains $U_n'$ and $U_n''$ are not disjoint since they both contain the nonempty
set $K'\cap K''$. Hence, by the Identity Theorem (Lemma \ref{idthm}), the two
extensions $\smfunc{\gamma'}{U_n'}{Z}$ and $\smfunc{\gamma''}{U_n''}{Z}$ can be combined
to obtain an extension on the union $U_n'\cup U_n''$. This shows that $\gamma\in E_n$ and finishes the proof.
\end{proof}
\section{Construction of a regular Lie group structure on {\boldmath$\Germ_\C(K,H)$}}\label{sec-reg-cx}
In this section, we show Theorem~C stated in the introduction, namely that there
exists a $C^0$-regular Lie group structure on the space of Lie group valued germs.
Before we consider global Lie groups, we first recall the notion of a \emph{local Lie group}:
\begin{defn}[Local Lie group]						\label{defn_local_lie_group}
 \begin{itemize}
  \item [(a)] 
	Let $G$ be a smooth manifold, $D\subseteq G\times G$ an open subset, $1\in G$, and let
	$
	 \func{m_G}{D}{G}{(x,y)}{x*y},
	$
	$
 	\func{\eta_G}{G}{G}{x}{x^{-1}}
	$
	be smooth maps. We call $(G , D , m_G , 1_G , \eta_G)$ a (smooth) \emph{local Lie group} if
	\begin{itemize}
 	 \item [(Loc1)] Assume that $(x,y),(y,z)\in D$. If $(x*y,z)$ or $(x,y*z)\in D$,
         then both are contained in $D$ and $(x*y)*z=x*(y*z)$.
 	 \item [(Loc2)] For each $x\in G$ we have $(x,1_G),(1_G,x)\in D$ and $x*1_G=1_G*x=x$.
 	 \item [(Loc3)] For each $x\in G$ we have $(x,x^{-1}),(x^{-1},x)\in D$ and $x*x^{-1}=x^{-1}*x=1_G$.
 	 \item [(Loc4)] If $(x,y)\in D$, then $(y^{-1},x^{-1})\in D$.
	\end{itemize}
 \item [(b)] The \emph{Lie algebra} $L(G):=T_{1_G}G$ of a local Lie group $G=(G,D,m_G,1_G,\eta_G)$
        is the tangent space of the manifold $G$ at point $1_G$. As in the case of (global) Lie groups,
        there is a natural structure of a locally convex Lie algebra on $L(G)$. 
 \item [(c)] A local Lie group~$G$ is called \emph{$C^0$-regular} if there is an open $0$-neighborhood
        $\Omega\subseteq C([0,1],L(G))$ such that each continuous curve $\gamma\in \Omega$ admits a
        continuously differentiable left evolution $\eta=\eta_\gamma\colon [0,1]\to G$ determined by
	\[
	  \eta(0)=1\quad\mbox{and}\quad (\forall t\in [0,1])\;\; \eta'(t)=\eta(t).\gamma(t),
	\]
	and the evolution map $\evol_G\colon \Omega\to G$, $\evol(\gamma):=\eta_\gamma(1)$
        is smooth (i.e., $C^\infty$).
\end{itemize}
Real analytic and complex local Lie groups are defined analogously.
In these cases, smoothness is replaced with real and complex analyticity,
respectively.
\end{defn}
\noindent
See \cite[Definition II.1.10]{Neeb2006} for more details on local Lie groups.
The following fact (see \cite{bourbaki1998}) gives a description
of local Lie groups modeled on Banach spaces:
\begin{numba}[Local Banach Lie groups]					\label{prop_local_Banach}
Let $\h$ be a complex Banach Lie algebra. Then there exits an absolutely convex open
$0$-neighborhood $W$ such that the Baker-Campbell-Hausdorff-series (BCH-series for short)
converges to a complex analytic map $\smfunc{*}{W\times W}{\h}$. By setting
$D:=\setm{(x,y)\in W\times W}{x*y\in W}$, we obtain a local Banach Lie group $(W, D, *, 0, -\id_W)$.
 Furthermore, \emph{every} local Banach Lie group is locally isomorphic to one which
is obtained in this fashion.
\end{numba}
\noindent
Now, we return to the setting of Section \ref{sec-germ-compactly regular}:
Let $M$ be a complex analytic manifold modeled on a complex metrizable locally
convex space $X$ and let $K$ be a compact non-empty subset.
We fix a basis of open neighborhoods $\seqn{U_n}$ as in
Lemma \ref{lem_compact_union_and_countable_basis}\,(b).
As in the last section, all vector spaces will be complex.
\begin{thm}[Regularity of the local group of germs]	\label{thm_local_regularity}
 Let $\h$ be a complex Banach-Lie algebra.
The pointwise Lie bracket of $\h$-valued germs in $\GermC{K}{\h}$ is continuous
and turns $\GermC{K}{\h}$ into a complete locally convex topological Lie algebra.

Furthermore, there is an open $0$-neighborhood $\Omega\subseteq \GermC{K}{\h}$
such that the BCH-series converges to a complex analytic map $\smfunc{*}{\Omega\times \Omega}{\GermC{K}{\h}}$.
The open set $\Omega$ together with this multiplication becomes a complex \emph{local} Lie group which is $C^0$-regular.
\end{thm}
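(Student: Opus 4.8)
The idea is to perform every construction pointwise over the Banach--Lie algebra $\h$ and to transport it to the germ space through the Banach steps $E_n:=\bHol(U_n,\h)$, exploiting throughout that $\GermC{K}{\h}=\bigcup_n E_n$ is a \emph{compactly regular} direct limit (Proposition~\ref{prop_GermCKZ_compact_regularity}). First I would treat the Lie algebra structure. Since $\h$ is a Banach--Lie algebra, its bracket satisfies $\norm{\br{a}{b}}\le C\norm{a}\norm{b}$ for some $C>0$; hence for $\gamma,\eta\in E_n$ the pointwise bracket $x\mapsto\br{\gamma(x)}{\eta(x)}$ is again bounded and holomorphic, with $\supnorm{\br{\gamma}{\eta}}\le C\supnorm{\gamma}\supnorm{\eta}$, so the bracket is continuous bilinear on each step. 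Writing $\GermC{K}{\h}\times\GermC{K}{\h}=\bigcup_n(E_n\times E_n)$ as a compactly regular direct limit (Lemma~\ref{lem_compact_regularity_products_and_subspaces}\,(a)), the level-wise brackets assemble, by the universal property of the final topology, into a continuous bracket on $\GermC{K}{\h}$; bilinearity, antisymmetry and the Jacobi identity are inherited pointwise, while completeness and the Hausdorff property are Proposition~\ref{prop_GermCKZ_compact_regularity}.

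\textbf{The local group.} Let $W\subseteq\h$ be the absolutely convex open $0$-neighborhood of~\ref{prop_local_Banach} on which the BCH-series converges to a complex analytic map $\smfunc{*}{W\times W}{\h}$, and fix $r>0$ with $\cBallin{r}{\h}{0}\subseteq W$. Put $\Omega_n:=\setm{\gamma\in E_n}{\supnorm{\gamma}<r}$ and $\Omega:=\bigcup_n\kappa_n(\Omega_n)$, where $\kappa_n\colon E_n\to\GermC{K}{\h}$ denote the limit maps. Since the bonding maps $\iota_{m,n}$ are norm-nonincreasing restrictions, $\kappa_m^{-1}(\Omega)=\bigcup_{n\ge m}\setm{\gamma\in E_m}{\supnorm{\iota_{m,n}(\gamma)}<r}$ is open, so $\Omega$ is an open $0$-neighborhood. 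For $\cl{\gamma},\cl{\eta}\in\Omega$, represented at a common level by functions with values in $\oBallin{r}{\h}{0}$, set $\cl{\gamma}*\cl{\eta}:=\cl{x\mapsto\gamma(x)*\eta(x)}$; this is well defined by the Identity Theorem (Lemma~\ref{idthm}). On each step the superposition operator $\Omega_n\times\Omega_n\to E_n$ induced by the analytic map $*$ is complex analytic (the power series of $*$ composes with $\gamma,\eta$, the estimates coming from the Cauchy inequalities on~$\h$), so $*$ is complex analytic on $\Omega\times\Omega$ after passing to the compactly regular direct limit. The axioms (Loc1)--(Loc4), and the fact that inversion is $\cl{\gamma}\mapsto\cl{-\gamma}$, hold because $\h$ itself is a local Banach--Lie group; this makes $\Omega$ a complex local Lie group.

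\textbf{$C^0$-regularity.} Given a continuous curve $\gamma\colon[0,1]\to\GermC{K}{\h}$ lying in a suitable open $0$-neighborhood of $C([0,1],\GermC{K}{\h})$, its image is compact and hence, by compact regularity, a compact subset of some $E_n$ on which $\gamma$ is continuous. As $E_n=\bHol(U_n,\h)$ is itself a Banach--Lie algebra whose BCH-neighborhood $\Omega_n$ is a $C^0$-regular local Banach--Lie group (see \cite[Theorem~5.1.1]{dahmen2011} and \cite{dahmen2010}), the left evolution $\eta_\gamma=\Evol(\gamma)$ exists in $E_n$ and defines an element of $\Omega$. For smoothness of $\evol$ I would first show that $C([0,1],-)$ commutes with the direct limit, i.e.\ that $C([0,1],\GermC{K}{\h})=\bigcup_n C([0,1],E_n)$ is again a compactly regular (LB)-space; here a compact set of curves has compact total image (the image of the evaluation map on $C\times[0,1]$), which lands at a single finite step by compact regularity. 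On each step the evolution $C([0,1],E_n)\supseteq\Omega_n'\to E_n$ is complex analytic by Banach--Lie group regularity with analytic dependence on the curve; composing with $\kappa_n$ and invoking the principle that a continuous map on a compactly regular (LB)-space which is complex analytic on every Banach step is complex analytic, one obtains that $\evol$ is complex analytic on $\GermC{K}{\h}$.

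\textbf{Main obstacle.} The genuinely nontrivial part is this last step. Solving the evolution equation is cheap once compact regularity has reduced the curve to a single Banach step, but the smoothness of $\evol$ rests on two non-formal ingredients: that $C([0,1],-)$ preserves compact regularity, so that every germ-valued evolution problem collapses to one Banach step, and the direct-limit criterion for analyticity, which is what allows the level-wise analytic evolution maps to be recognized as a single analytic map on $\GermC{K}{\h}$. Establishing these two facts carefully---rather than the pointwise BCH calculus or the Lie algebra axioms---is where I expect the real work to lie.
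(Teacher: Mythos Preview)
Your proposal is correct and follows essentially the same route as the paper. The paper's own proof is a two-line appeal to \cite[Theorem~4.5\,(a),(c)]{Dahmen_UnionGroupRegular_JLT}, which packages precisely the machinery you have unpacked: each $E_n=\bHol(U_n,\h)$ is a Banach--Lie algebra under the pointwise bracket, the compactly regular (LB)-structure of Proposition~\ref{prop_GermCKZ_compact_regularity} then yields both the topological Lie algebra structure and the BCH local group, and part~(c) of that reference supplies $C^0$-regularity from compact regularity. The two ``main obstacles'' you single out---that $C([0,1],-)$ preserves compact regularity, and the step-wise criterion for complex analyticity on compactly regular (LB)-spaces---are exactly the ingredients proved in \cite{Dahmen_UnionGroupRegular_JLT} and \cite{dahmen2010}, so your identification of where the work lies is accurate. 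One small wording issue: invoking ``the universal property of the final topology'' for the \emph{bilinear} bracket is not literally correct (that universal property governs linear maps); what you actually need, and what \cite{Dahmen_UnionGroupRegular_JLT} uses, is that on a compactly regular (LB)-space continuity (and analyticity) can be tested on the Banach steps because compact sets land in a single step.
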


\begin{proof}[Proof of Theorem \ref{thm_local_regularity}]
For each $n\in\N$ the Banach space $\g_n:= \BHol{U_n}{\h}$ admits a pointwise Lie bracket and becomes
a Banach-Lie algebra in its own right with a corresponding BCH-multiplication which corresponds to
the pointwise BCH-multiplication of analytic maps.

By \cite[Theorem 4.5 (a)]{Dahmen_UnionGroupRegular_JLT}, we conclude that the locally
convex direct limit $\GermC{K}{\h}$ becomes a locally convex topological Lie algebra
(the completeness of the space of germs was already shown in Proposition \ref{prop_GermCKZ_compact_regularity})
admitting an open zero-neighborhood such that the BCH-multiplication defines a
local Lie group structure. Furthermore, since the sequence $\seqn{\g_n}$ is compactly regular
by Proposition \ref{prop_GermCKZ_compact_regularity}, we may apply part (c) of
\cite[Theorem 4.5]{Dahmen_UnionGroupRegular_JLT} to conclude that
the local Lie group so obtained is $C^0$-regular
(which is called \emph{strongly} $C^0$-regular there).
\end{proof}

Finally, we will now consider germs of mappings with values in a (global) Banach Lie group.
Therefore, let $H$ be a complex Banach Lie group and let $\h:=L(H)$ be its Lie algebra,
endowed with a norm compatible with the Lie bracket. We denote the adjoint representation
of $H$ on $\h$ by $\smfunc{\Ad_H}{H}{\Aut(\h)\leq \GL(\h).}$ Before we endow the group
$\GermC{K}{H}$ with a manifold structure, we will study the natural action of it
on the locally convex Lie algebra we just defined:
\begin{prop}									\label{prop_adjoint_Germ}
 The pointwise \emph{adjoint action}
\begin{eqnarray*}
\AD\colon  \GermC{K}{H} \times \GermC{K}{\h} &\to& \GermC{K}{\h},\\
\cl{\gamma} . \cl{\eta}
&:= & [ x\mapsto \Ad_H(\gamma(x)).\eta(x)]
\end{eqnarray*}
of the abstract group $\GermC{K}{H}$ on the locally convex Lie algebra $\GermC{K}{\h}$ is
well-defined and for each fixed $[\gamma]\in \GermC{K}{H}$, the linear map
 \[
  \func{\AD(\cl{\gamma})}{\GermC{K}{\h}}{\GermC{K}{\h}}{\cl{\eta}}{\cl{\gamma} . \cl{\eta}}
 \]
 is continuous.
\end{prop}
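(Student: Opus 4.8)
The plan is to verify well-definedness of the pointwise formula first and then to deduce the asserted continuity from the universal property of the locally convex direct limit. For well-definedness, I would fix representatives $\gamma \colon W \to H$ of $\cl{\gamma}$ and $\eta \colon V \to \h$ of $\cl{\eta}$ on open neighborhoods $W,V$ of $K$. Since $H$ is a complex Banach-Lie group, the adjoint representation $\Ad_H \colon H \to \GL(\h) \subseteq \BoundOp{\h}$ is complex analytic, so $\Ad_H \circ \gamma \colon W \to \BoundOp{\h}$ is complex analytic. The evaluation map $\BoundOp{\h} \times \h \to \h$, $(T,v) \mapsto T.v$, is continuous bilinear, hence complex analytic, and composition of complex analytic maps is complex analytic. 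Consequently $x \mapsto \Ad_H(\gamma(x)).\eta(x)$ is a complex analytic $\h$-valued map on the open neighborhood $W \cap V$ of $K$ and thus represents a germ in $\GermC{K}{\h}$. Independence of the chosen representatives is immediate: two representatives of $\cl{\gamma}$ (resp.\ of $\cl{\eta}$) agree on a smaller common neighborhood, on which the pointwise products agree as well.

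To prove that $\AD(\cl{\gamma})$ is continuous for fixed $\cl{\gamma}$, I note first that it is linear by construction. As $\GermC{K}{\h} = \bigcup_n \BHol{U_n}{\h}$ carries the locally convex direct limit topology, a linear map out of it is continuous as soon as its composition with each canonical map $\lambda_n \colon \BHol{U_n}{\h} \to \GermC{K}{\h}$ is continuous. I would therefore reduce the claim to showing, for every $n \in \N$, continuity of the linear map
\[
  \nnfunc{\BHol{U_n}{\h}}{\GermC{K}{\h}}{\eta}{\cl{x \mapsto \Ad_H(\gamma(x)).\eta(x)}}.
\]

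The decisive and most delicate step is a uniform operator-norm bound for $\Ad_H \circ \gamma$ near $K$. The \emph{main obstacle} is that the neighborhoods $U_n$ need not be relatively compact, so the continuous function $x \mapsto \opnorm{\Ad_H(\gamma(x))}$ need not be bounded on all of $U_n$; one cannot simply estimate over $U_n$. To overcome this I would exploit the compactness of $K$: the image $\gamma(K)$ is compact in $H$, so $\Ad_H(\gamma(K))$ is a bounded subset of $\BoundOp{\h}$, and by continuity of $\Ad_H \circ \gamma$ together with compactness of $K$ there is an open neighborhood $N$ of $K$ with $C := \sup_{x \in N} \opnorm{\Ad_H(\gamma(x))} < \infty$. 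Since $\seqn{U_n}$ is a neighborhood basis of $K$, I may choose an index $p \geq n$ with $U_p \subseteq N \cap W \cap U_n$. Then for each $\eta \in \BHol{U_n}{\h}$ one obtains
\[
  \sup_{x \in U_p} \hnorm{\Ad_H(\gamma(x)).\eta(x)}
  \leq C \cdot \sup_{x \in U_p} \hnorm{\eta(x)}
  \leq C \cdot \supnorm{\eta},
\]
so that $\eta \mapsto (x \mapsto \Ad_H(\gamma(x)).\eta(x))$ is a bounded linear map $\BHol{U_n}{\h} \to \BHol{U_p}{\h}$ of operator norm at most $C$.

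Composing this bounded map with the continuous canonical map $\BHol{U_p}{\h} \to \GermC{K}{\h}$ yields continuity of $\AD(\cl{\gamma}) \circ \lambda_n$ for each $n$, and the universal property of the direct limit then gives continuity of $\AD(\cl{\gamma})$ on all of $\GermC{K}{\h}$. I expect the analyticity verification of the first step to be routine; the genuine work lies in the boundedness argument of the third step, where the possible failure of relative compactness of the $U_n$ forces one to pass to a smaller index $p$ and to invoke the compactness of $K$ directly.
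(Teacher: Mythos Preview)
Your proposal is correct and follows essentially the same route as the paper: establish well-definedness via analyticity of $\Ad_H$ and the evaluation map, then obtain continuity by bounding $\opnorm{\Ad_H\circ\gamma}$ on a sufficiently small neighborhood of $K$ and invoking the universal property of the locally convex direct limit. The only cosmetic difference is that the paper first fixes an index $m$ with $U_m$ contained in the bounded-operator-norm neighborhood and then, for every $n\geq m$, gets a bounded operator $\BHol{U_n}{\h}\to\BHol{U_n}{\h}$ at the \emph{same} step, whereas you allow the target index $p$ to depend on $n$; both variants work equally well.
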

\begin{proof}
 Let a group element $\cl{\gamma}\in\GermC{K}{H}$ and a Lie algebra element
$\cl{\eta}\in\GermC{K}{\h}$ be given. Then these germs can be represented by complex analytic functions:
 $\smfunc{\gamma}{U_m}{H}$ and $\smfunc{\eta}{U_n}{\h}$, with $m,n\in\N$. We define the action of the
group element $\cl{\gamma}$ on the Lie algebra element $\cl{\eta}$ as the germ $[\gamma].[\eta]$ of the following function:
 \[
  \func{\gamma.\eta}{U_n \cap U_m}{\h}{x}{\Ad_H(\gamma(x) ). \eta(x).}
 \]
 This function is analytic as a composition of analytic maps and it is clear that the germ $\cl{\gamma.\eta}$
does not depend on the chosen functions $\gamma$ and $\eta$ but only on the corresponding germ.

 It remains to show continuity. Let $\cl{\gamma}\in\GermC{K}{H}$ be an element in the abstract group.
It can be represented by an analytic function
 \[
  \smfunc{\gamma}{U}{H}
 \]
 for $U$ an open neighborhood of $K$. Now, consider the composition with the adjoint representation
of the Banach Lie group $H$ which is an analytic map
 \[
  \smfunc{\Ad_H}{H}{\Aut(\h)\subseteq (\BoundOp{\h},\opnorm{\cdot})}.
 \]
 Thus, the composition $\smfunc{\Ad_H\circ \gamma}{U}{\BoundOp{\h}}$ is an analytic function on $U_m$
with values in the Banach space $\BoundOp{\h}$ of bounded operators on $\h$.
 Hence, by the same arguments as in the last section, we may shrink the domain $U$
to a smaller neighborhood $V$ and assume that $\Ad_H\circ \gamma|_V$ is bounded by an $R>0$
on the smaller set. We may assume that $V=U_m$ for a number $m\in\N$.

 Now, for each  $n\ge m$, we consider the linear map
 \[
  \func{A_n}{\BHol{U_n}{\h}}{\BHol{U_n}{\h}}{\eta}{\gamma.\eta}
 \]
with $(\gamma.\eta)(x):=\Ad_H(\gamma(x)).\eta(x)$.
This linear map is continuous, since for each $\eta\in\BHol{U_n}{\h}$, we have
 \[
  \supnorm{ A_n(\eta) } 	 	=	\sup_{x\in U_n} \hnorm{\Ad_H(\gamma(x)). \eta(x)} 
					\leq    \sup_{x\in U_n} \underbrace{\opnorm{\Ad_H(\gamma(x))}}_{\leq R}
                                        \underbrace{\hnorm{\eta(x)}}_{\leq \supnorm{\eta}}
					\leq 	R	\supnorm{\eta}.
 \]
 This shows that each $A_n$ is continuous linear (with $\opnorm{A_n}\leq R$) and by the universal property
of the locally convex direct limit, the direct limit map
 \[
  \func{\AD(\cl{\gamma})}{\GermC{K}{\h}}{\GermC{K}{\h}}{\cl{\eta}}{\cl{\gamma} . \cl{\eta}}
 \]
 is continuous as well.
\end{proof}
Now, we are able to show the main result about the group $\GermC{K}{H}$, in particular that it
carries a natural Lie group structure modelled on the (LB)-space $\GermC{K}{\h}$:
\begin{thm}						\label{thm_GermC_Lie_group_structure}
 On the group $\GermC{K}{H}$, there exists a unique locally convex complex Lie group structure such that the map
 \[
  \func{\EXP}{\GermC{K}{\h}}{\GermC{K}{H}}{\cl{\eta} }{ \cl{\exp_H \circ \eta}}
 \]
 becomes a complex analytic local diffeomorphism.
 The Lie algebra of this Lie group is the complete $($LB$)$-space $\GermC{K}{\h}$
and its exponential map is the map $\smfunc{\EXP}{\GermC{K}{\h}}{\GermC{K}{H}}$ just defined.

Furthermore, the Lie group $\GermC{K}{H}$ is $C^0$-regular.
In particular, it is regular in Milnor's sense.
\end{thm}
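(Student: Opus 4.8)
The plan is to construct the global Lie group structure from the \emph{local} one provided by Theorem~\ref{thm_local_regularity}, and then to promote it to a global structure by checking that conjugation acts analytically, the decisive input being the adjoint action of Proposition~\ref{prop_adjoint_Germ}.

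First I would turn $\EXP$ into a chart near the identity. Since $H$ is a Banach--Lie group, $\exp_H\colon\h\to H$ restricts to a complex analytic diffeomorphism of an open $0$-neighborhood $\h_0\subseteq\h$ onto an identity neighborhood $H_0\subseteq H$, and on a smaller $0$-neighborhood one has the pointwise Baker--Campbell--Hausdorff identity $\exp_H(u)\exp_H(v)=\exp_H(u*v)$. A germ $\cl{\gamma}\in\GermC{K}{H}$ close to the identity can, after shrinking its domain (because $\gamma(K)$ is compact and may be assumed contained in $H_0$), be represented by an $H_0$-valued function, so $\cl{\gamma}\mapsto\cl{\log_H\circ\gamma}$ (with $\log_H=(\exp_H|_{\h_0})^{-1}$) is a two-sided local inverse of $\EXP$ valued in $\GermC{K}{\h}$. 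Thus $\EXP$ restricts to a bijection from the local Lie group $\Omega\subseteq\GermC{K}{\h}$ of Theorem~\ref{thm_local_regularity} onto an identity neighborhood in $\GermC{K}{H}$, transporting the complex analytic BCH-multiplication $*$ to the group multiplication. This makes multiplication and inversion complex analytic near the identity.

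The key step is the conjugation condition that allows the passage from a local to a global Lie group. For $\cl{\gamma}\in\GermC{K}{H}$ and small $\cl{\eta}\in\GermC{K}{\h}$ one computes, pointwise and then for germs,
\[
 \cl{\gamma}\,\EXP(\cl{\eta})\,\cl{\gamma}^{-1}
 = \cl{\gamma\,(\exp_H\circ\eta)\,\gamma^{-1}}
 = \cl{\exp_H\circ(\Ad_H(\gamma).\eta)}
 = \EXP\!\left(\AD(\cl{\gamma}).\cl{\eta}\right),
\]
so conjugation by $\cl{\gamma}$ corresponds, through the chart $\EXP$, to $\AD(\cl{\gamma})$. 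By Proposition~\ref{prop_adjoint_Germ} this map is continuous linear on $\GermC{K}{\h}$, hence complex analytic, so $c_{\cl{\gamma}}=\EXP\circ\AD(\cl{\gamma})\circ\EXP^{-1}$ is complex analytic on an identity neighborhood, for every group element. With this I would invoke the standard local-to-global construction of a Lie group from a local Lie group together with analytic conjugation data, yielding a unique complex analytic Lie group structure on $\GermC{K}{H}$ for which the above identity neighborhood is an open submanifold and $\EXP$ is a complex analytic local diffeomorphism. As the multiplication near the identity is the BCH-product on $\GermC{K}{\h}$, the Lie algebra is $\GermC{K}{\h}$ with its pointwise bracket (Theorem~\ref{thm_local_regularity}) and $\EXP$ is the exponential map; uniqueness follows because requiring $\EXP$ to be an analytic local diffeomorphism fixes the manifold structure on an identity neighborhood and, by analyticity of left translations, everywhere.

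Finally, for $C^0$-regularity I would lift the local $C^0$-regularity of Theorem~\ref{thm_local_regularity} by a patching argument. Given a continuous curve $\gamma\colon[0,1]\to\GermC{K}{\h}$, subdivide $[0,1]$ into $N$ equal subintervals and reparametrize each to $[0,1]$; the resulting pieces $s\mapsto\tfrac1N\gamma\big(\tfrac{j-1+s}{N}\big)$ are shrunk towards $0$ by the factor $1/N$, so for $N$ large they lie in the evolution domain $\Omega'\subseteq C([0,1],\GermC{K}{\h})$ of the local group. Solving the evolution equation on each piece and forming the ordered product of the results via the group multiplication produces the global left evolution, so the evolution map of $\GermC{K}{H}$ exists; on a $\sup$-norm neighborhood of any fixed curve (where a single $N$ works) it is a finite composition of the smooth local evolution map with continuous linear reparametrizations and smooth group multiplications, hence smooth. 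This yields $C^0$-regularity, and regularity in Milnor's sense follows at once. The main obstacle is exactly this passage from local to global: one must check that the conjugation maps are analytic on a \emph{common} identity neighborhood and that the gluing produces a Hausdorff complex analytic manifold, and, separately, that the patching argument for regularity gives a map that is smooth uniformly near each curve rather than merely defined pointwise.
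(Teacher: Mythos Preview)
Your proposal is correct and follows essentially the same route as the paper: build the local Lie group from Theorem~\ref{thm_local_regularity}, use the conjugation identity $\cl{\gamma}\,\EXP(\cl{\eta})\,\cl{\gamma}^{-1}=\EXP(\AD(\cl{\gamma}).\cl{\eta})$ together with Proposition~\ref{prop_adjoint_Germ} to make conjugation analytic, and then apply a local-to-global principle. The only presentational differences are that the paper invokes the explicit construction principle~\ref{prop_construction_with_given_exp} (first producing the Lie group structure on the subgroup $\langle\EXP(\GermC{K}{\h})\rangle$, then extending), and for $C^0$-regularity it quotes \cite[Lemma~9.5]{NaS} rather than writing out the subdivision argument you sketch; your patching argument is exactly what that lemma encodes, though you should phrase the ``sup-norm neighborhood'' as a compact-open neighborhood, since $\GermC{K}{\h}$ is an (LB)-space rather than a normed space.
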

To show Theorem \ref{thm_GermC_Lie_group_structure}, we shall use the following fact,
which can be found in \cite[Corollary 1.3.16]{dahmen2011} and is based on a general construction principle
(see e.g.~\cite[Chapter III,\S 1.9, Prop.\ 18]{bourbaki1998}.
\begin{numba}[Construction of a Lie group with a given exponential function]			\label{prop_construction_with_given_exp} \ 
 \begin{itemize}
  \item [(a)] 
 Let $\K\in\set{\R,\C}$ and let $\g$ be a Hausdorff locally convex $\K$-Lie algebra and let
$\Omega_1$ and $\Omega_2$ be open symmetric $0$-neighborhoods in $\g$ such that the BCH-series converges
on $U\times U$ and defines a $C^\omega_\K$-map $\smfunc{\ast}{\Omega_1\times \Omega_1}{\Omega_2}$.
Let $\smfunc{\Phi}{\g}{\cG}$ be an map into an abstract group $\cG$ satisfying
\begin{itemize}
 \item [$\bullet$]  $\Phi|_{\Omega_2}$  is injective.
 \item [$\bullet$]  $\Phi(nx)  = \left(	\Phi(x)	\right)^n	\hbox{ for }n\in\N,x\in\g.$
 \item [$\bullet$]  $\Phi(x*y) = \Phi(x)\cdot\Phi(y) 			\hbox{ for }x,y\in \Omega_1.$
\end{itemize}
 Then there exists a unique $C^\omega_\K$-Lie group structure on
$\cG_0:=\gen{\Phi(\g)}=\gen{\Phi(\Omega_1)}=\gen{\Phi(\Omega_2)}$
such that $\smfunc{\Phi|_{\Omega_1}}{\Omega_1}{\Phi(\Omega_1)}$
becomes a diffeomorphism onto an open subset.

Furthermore, the linear map $\smfunc{T_0\Phi}{\g}{L(\cG_0)}$ is an isomorphism
of locally convex Lie algebras and after identifying $\g$ with $L(\cG_0)$,
we obtain that $\cG_0$ admits a $C^\omega$ exponential function and we have $\exp_G = \Phi$.
  \item [(b)] Let $\cG$ be an abstract group and let $\cN \subseteq \cG$ be a normal subgroup,
carrying a $C^\omega_\K$-Lie group structure. We assume that for each $a\in \cG$ the conjugation map
             \[
              \nnfunc{\cN}{\cN}{g}{a\cdot g\cdot a^{-1}}
             \]
	     is $C^\omega_\K$. Then $\cG$ carries a unique $C^\omega_\K$-structure such that $\cN$ is open in $\cG$.
 \end{itemize}

  \end{numba}

Now, we can prove Theorem \ref{thm_GermC_Lie_group_structure}:
\begin{proof}[Proof of Theorem \ref{thm_GermC_Lie_group_structure}]
 We set $\g:=\GermC{K}{\h}$ and let $\cG:=\GermC{K}{H}$ and let
 \[
  \func{\Phi:=\EXP}{\GermC{K}{\h}}{\GermC{K}{H}}{\cl{\eta} }{ \cl{\exp_H \circ \eta}.}
 \]
 Since $H$ is a Banach Lie group, there is an open ball $\oBallin{\epsilon}{\h}{0}$
such that $\exp_H|_{\oBallin{\epsilon}{\h}{0}}$ is injective.
This implies that the map $\Phi$ is injective on the set
 \[
  \Omega_2 := \setm{\cl{\gamma}\in\GermC{K}{\h}}{\gamma(K)\subseteq \oBallin{\epsilon}{\h}{0}}
 \]
 which is clearly an open neighborhood in $\g=\GermC{K}{\h}$. Furthermore, we know by
Theorem \ref{thm_local_regularity} that there is a symmetric open $0$-neighborhood
in $\Omega_1\subseteq \g$ such that 
the BCH-multiplication on $\g$ converges on $\Omega_1\times\Omega_1$ to a
 $C^\omega_\C$-map $\smfunc{*}{\Omega_1\times \Omega_1}{\g}$. By shrinking $\Omega_1$ if necessary,
we may also assume that $\Omega_1 * \Omega_1 \subseteq \Omega_2$.
 These neighborhoods $\Omega_1$ and $\Omega_2$ now satisfy the hypotheses of part (a) of
Proposition \ref{prop_construction_with_given_exp}.
 Therefore, we obtain an analytic Lie group structure on the group $\cG_0$ generated
by the image of $\Phi$ such that $\Phi$ maps the $0$-neighborhood $\Omega_1\subseteq \g$ diffeomorphically
to the open identity neighborhood $\Phi(\Omega_1)\subseteq \cG_0$.

 Next, we want to use part (b) of Proposition \ref{prop_construction_with_given_exp} to extend the
Lie group structure on $\gen{\Phi(\g)}$ to the whole group $\cG=\GermC{K}{H}$.
 To this end, let $\cl{\gamma}\in \cG$ and let $\cl{\eta}\in \g$ be given. Then we have the formula
 \[
  \cl{\gamma} \cdot \Phi(\cl{\eta}) \cdot (\cl{\gamma})^{-1} = \Phi\left(\AD(\cl{\gamma}).\cl{\eta}\right) \tag{$*$} \label{eqn_r1}
 \]
 which is easily checked. This shows that $\Phi(\g)$ is invariant under the conjugation with elements
of the group $\cG$, hence $\gen{\Phi(\g)}$ is a normal subgroup of $\cG$. It remains to show that the map
 \begin{equation}\label{conjmap}
  \nnfunc{\gen{\Phi(\g)}}{\gen{\Phi(\g)}}{\cl{\xi}}{\cl{\gamma} \cdot \cl{\xi} \cdot (\cl{\gamma})^{-1}}
 \end{equation}
is analytic for each fixed group element $\cl{\gamma}\in\cG$.
Since $\Phi$ is an analytic local diffeomorphism at $0$ (taking $0$ to the identity element)
and $\text{AD}([\gamma])$ is continuous linear (and hence
analytic) by Proposition \ref{prop_adjoint_Germ}, we
deduce from (\ref{eqn_r1}) that the conjugation map (\ref{conjmap})
is analytic on an identity neighborhood
and hence analytic (being a group homomorphism).
Therefore, by part (b) of
Proposition \ref{prop_construction_with_given_exp}, there is a unique Lie group structure
on the group $\GermC{K}{H}$ such that $\gen{\Phi(\GermC{K}{\h})}$ is an open subgroup.

 It remains to show that this Lie group is $C^0$-regular.
 By Lemma 9.5 in \cite{NaS}, we know that a Lie group is $C^0$-regular
if and only if it has an open identity neighborhood which is a $C^0$-regular \emph{local} Lie group.
 Since $\Phi(\Omega_1)$ is an open identity neighborhood of $\cG_0=\gen{\Phi(\g)}$
which itself is open in $\GermC{K}{H}$,
 the regularity of $\GermC{K}{H}$ follows from the regularity of the local group
$(\Omega_1,*)$ which was shown in Theorem  \ref{thm_local_regularity}.
\end{proof}
\noindent{\small {\bf Rafael Dahmen},
Fachbereich Mathematik, Technische Universit\"{a}t Darmstadt,\\
Schlo\ss{}gartenstr.\ 7, 64289 Darmstadt, Germany. Email:\\
\href{mailto:dahmen@mathematik.tu-darmstadt.de}{dahmen@mathematik.tu-darmstadt.de}\\[2mm]
{\bf Helge Gl\"{o}ckner}, Institut f\"{u}r Mathematik,
Universit\"{a}t Paderborn,\\
Warburger Str.\ 100, Germany. Email:
\href{mailto:glockner@math.uni-paderborn.de}{glockner@math.uni-paderborn.de}\\[2mm]
{\bf Alexander Schmeding},
Institutt for matematiske fag, NTNU, 7491 Trondheim,\\
Norway. Email:
\href{mailto:alexander.schmeding@math.ntnu.no}{alexander.schmeding@math.ntnu.no}}\vfill

\begin{thebibliography}{99}
%
%
\bibitem{zaareer2013}
Alzaareer, H., ``Lie Groups of Mappings on Non-Compact Spaces and Manifolds,''
doctoral dissertation, Universit\"{a}t Paderborn, 2013
(see {\tt nbn-resolving.de/urn:nbn:de:hbz:466:2-11572}).
%
%
\bibitem{MR0313810}
Bochnak, J. and J. Siciak,
\emph{Polynomials and multilinear mappings in topological vector spaces},
Studia Math.\ {\bf 39}
(1971), 59--76.
%
%
\bibitem{BS71b}
Bochnak, J. and J. Siciak,
\emph{Analytic functions in topological vector spaces},
Studia Math.\ {\bf 39}
(1971), 77--112.
%
%
\bibitem{bourbaki1998}
Bourbaki, N.,
``Lie Groups and Lie Algebras,'' Chapters 1--3,
Springer, Berlin, 1998.
%
%
\bibitem{BW1959}
Bruhat, F. and H. Whitney,
\emph{Quelques propri\'{e}t\'{e}s fondamentales des ensembles analytiques-r\'{e}els},
Comment.\ Math.\ Helv.\ {\bf 33} (1959), 132--160.
%
%
\bibitem{dahmen2010}
Dahmen, R.,
\emph{Analytic mappings between LB-spaces and applications in infinite-dimensional Lie theory},
Math.\ Z.\ {\bf 266} (2010), no.\ 1, 115--140. 
%
%
\bibitem{dahmen2011}
Dahmen, R.,
``Direct Limit Constructions in Infinite-Dimensional Lie Theory,''
doctoral dissertation, Universit\"{a}t Paderborn, 2011
(see {\tt nbn-resolving.de/urn:nbn:de:hbz:466:2-239}).

%
\bibitem{Dahmen_UnionGroupRegular_JLT}
Dahmen, R.,
\emph{Regularity in {M}ilnor's sense for ascending unions of {B}anach-{L}ie groups}.
\newblock {\em J. Lie Theory}, {\bf 24} (2014), no.\,2, 545--560.
%
%
\bibitem{Engelking1989}
Engelking, R., ``General Topology,''
Sigma Series in Pure Mathematics {\bf 6},
Heldermann, Berlin, ${}^2$1989.
%
%
\bibitem{hg2002}
Gl\"ockner, H.,
\emph{Infinite-dimensional Lie groups without completeness restrictions},
pp.\ 43--59 in: A. Strasburger et al.\ (eds.),
``Geometry and Analysis on Finite- and Infinite-Dimensional Lie Groups,''
Banach Center Publications {\bf 55}, Warsaw, 2002. 
%
%
\bibitem{hg2004}
Gl\"{o}ckner, H.,
\emph{Lie groups of germs of analytic mappings},
pp.\,1--16 in: T. Wurzbacher (ed.),
``Infinite Dimensional Groups and Manifolds,''
IRMA Lect.\ Math.\ Theor.\ Phys.\ {\bf 5}, de Gruyter,
Berlin, 2004. 
%
%
\bibitem{hg2013}
Gl\"{o}ckner, H.,
\emph{Regularity properties of infinite-dimensional Lie groups},
Oberwolfach Rep.\ {\bf 13} (2013), 791--794. 
%
%
\bibitem{hg2013b}
Gl\"{o}ckner, H.,
\emph{Differentiable mappings between spaces of sections}, preprint,
{\tt arXiv:1308.1172}. 
%
%
\bibitem{hg2012}
Gl\"{o}ckner, H.,
\emph{Regularity properties of inifinite-dimensional Lie groups
and semiregularity}, manuscript, cf.\ {\tt arXiv:1208.0715}.
%
%
\bibitem{GaN}
Gl\"{o}ckner, H. and K.-H. Neeb,
``Infinite-Dimensional Lie Groups,''
book in preparation.
%
%
\bibitem{grauert}
Grauert, H.,
\emph{On Levi's Problem and the imbedding of real-analytic manifolds},
Annals of Math.\ (2) {\bf 68} (1958), 460--472.
%
%
\bibitem{kaur2013}
Kaur, A., ``Superposition Operators and Associated Infinite-Dimensional Lie Groups,''
Master's thesis, Universit\"{a}t Paderborn, 2013
(advised by H. Gl\"{o}ckner).
%
%
\bibitem{KaM}
Kriegl, A. and P.\,W. Michor,
``The Convenient Setting of Global Analysis,''
Mathematical Surveys and Monographs {\bf 53},
AMS, Providence, 1997.
%
%
\bibitem{Milnor1984}
Milnor, J.,
\emph{Remarks on infinite-dimensional Lie groups}, pp.\ 1007--1057
in: B.\,S. DeWitt and R.\ Stora (eds.),
``Relativity, Groups and Topology II''
(Les Houches, 1983), North-Holland, Amsterdam, 1984. 
%
%
\bibitem{Neeb2006}
Neeb, K.-H.,
\emph{Towards a Lie theory of locally convex groups},
Jpn.\ J. Math.\ {\bf 1} (2006), no.\ 2, 291--468. 
%
%
\bibitem{NaS}
Neeb, K.-H. and H. Salmasian,
\emph{Differentiable vectors and unitary representations of Fr\'{e}chet-Lie supergroups},
Math.\ Z. {\bf 275} (2013), no.\,1--2, 419--451. 
%
%
%
%
\bibitem{NaW}
Neeb, K.-H. and F. Wagemann, 
\emph{Lie group structures on groups of smooth and holomorphic maps on non-compact manifolds},
Geom.\ Dedicata {\bf 134} (2008), 17--60. 
%
%
\bibitem{pieper}
Pieper, T., ``Lie Groups of Unbounded Weighted Mappings,''
Universit\"{a}t Paderborn, Master's thesis, Universit\"at Paderborn, 2014
(advised by H. Gl\"{o}ckner).
%
%
\bibitem{wengenroth}
Wengenroth, J.,
``Derived Functors in Functional Analysis,''
Lecture Notes in Mathematics {\bf 1810},
Springer, Berlin, 2003.
%
%
%
\end{thebibliography}
\end{document}